\documentclass[11pt]{article}
\usepackage{amsmath, amsthm, amssymb, amscd, mathptmx}

\usepackage{amsfonts}

\usepackage{color}
\usepackage{pstricks}

\usepackage[all]{xy}\CompileMatrices\SelectTips{cm}{12}

\theoremstyle{plain}
\newtheorem{Thm}{\sc Theorem}[section]
\newtheorem{Theorem}[Thm]{\sc Theorem}
\newtheorem{Corollary}[Thm]{\sc Corollary}
\newtheorem*{Corollary*}{\sc Corollary}

\newtheorem{Proposition}[Thm]{\sc Proposition}
\newtheorem*{Proposition*}{\sc Proposition}
\newtheorem{Lemma}[Thm]{\sc Lemma}
\newtheorem{Conjecture}[Thm]{\sc Conjecture}

\theoremstyle{definition}
\newtheorem{Definition}[Thm]{Definition}

\theoremstyle{remark}
\newtheorem{Remark}[Thm]{Remark}
\newtheorem{Example}[Thm]{Example}
\newtheorem*{Example*}{Example}
\newtheorem*{Remark*}{Remark}


\renewcommand{\AA}{{\mathbb A}}
\newcommand{\CC}{{\mathbb C}}

\newcommand{\ZZ}{{\mathbb Z}}

\newcommand{\PP}{{\mathbb P}}
\newcommand{\QQ}{{\mathbb Q}}
\newcommand{\RR}{{\mathbb R}}

\newcommand{\cO}{{\mathcal O}}

\newcommand{\Alb}{{\mathop{\rm Alb \, }}}
\newcommand{\Pic}{{\mathop{\rm Pic \, }}}
\newcommand{\GL}{\mathop{\rm GL\, }}
\newcommand{\PGL}{\mathop{\rm PGL\, }}
\newcommand{\SL}{\mathop{\rm SL\, }}

\newcommand{\Hom}{{\mathop{{\rm Hom}}}}

\newcommand{\orb}{{\mathop{\rm orb \, }}}

\newcommand{\res}{{\mathop{\rm res}}}
\newcommand{\rk}{{\mathop{\rm rk \,}}}

\newcommand{\Sym}{{\mathop{{\rm Sym}}}}

\begin{document}

\markboth {\rm Rank $3$ rigid representations}{}

\title{Rank $3$ rigid representations of projective fundamental groups}
\author{Adrian Langer, Carlos Simpson}
\date{}

\maketitle


\begin{abstract}
Let $X$ be a smooth complex projective variety with  basepoint $x$.
We prove that every rigid integral irreducible representation $\pi_1(X\! ,x)\to \SL \! (3, \CC)$
is of geometric origin, i.e., it comes from some family of smooth projective varieties. This partially generalizes
an earlier result by K. Corlette and the second author in the rank $2$ case and answers one of their questions.
 \end{abstract}

\section{Introduction}

The main examples of local systems in complex algebraic geometry are those that come from
families of varieties. Suppose $f:Z\rightarrow X$ is a smooth projective morphism of algebraic varieties. Then the
$i$-th higher direct image of the constant sheaf $\CC _Z$ is a semisimple local system $R^if_{\ast}(\CC _Z)$ on $X$.
It furthermore has a structure of {\em polarized $\ZZ$-variation of Hodge structure} with integral structure
given by the image of the map $R^if_{\ast}(\ZZ _Z)\rightarrow R^if_{\ast}(\CC _Z)$. We will view all the irreducible
direct factors of such local systems as coming from geometry.

Somewhat more generally, over a smooth
variety $X$ we say that an irreducible $\CC$-local system
$L$ {\em is of geometric origin} if there is a Zariski open dense subset $U\subset X$ and a smooth projective family $f:Z\rightarrow
U$ such that $L|_U$ is a direct factor of $R^if_{\ast}(\CC _Z)$ for some $i$. See Sections \ref{subsec-geo} and  \ref{sec-cor}
for a further discussion of possible
variants of this notion.

Local systems of geometric origin are $\CC$-variations of Hodge structure. Furthermore,
from the  $R^if_{\ast}(\ZZ _Z)$ they inherit
integral structure, in the sense that the traces of monodromy matrices are
algebraic integers. Local systems of geometric origin also have a Galois descent property when viewed as $\QQ _{\ell}$-local systems
over an arithmetic model of $X$, as described in \cite[Theorem 4]{HBLS}.

Several natural questions may be posed. For instance,
does an integral local system underlying a $\CC$-variation of Hodge structure
have geometric origin?

Recall from Mostow-Margulis rigidity theorems that many local systems naturally occurring over higher dimensional varieties are {\em rigid},
i.e. they have no nontrivial
deformations. One can easily see that rigid $\QQ _{\ell}$-local systems automatically have the
Galois descent property mentioned above. Furthermore, it is a consequence of Corlette's theorem that rigid local systems
are $\CC$-variations of Hodge structure. It is natural to formulate the following conjecture:

\begin{Conjecture}
\label{conj-rigid-motivic}
Over a smooth projective variety $X$, any rigid local system $L$ is of geometric origin.
\end{Conjecture}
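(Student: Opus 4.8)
The plan is to run, in general, the three-stage strategy that succeeds in rank $\le 2$ and, in the body of this paper, in rank $3$: first use rigidity to put $L$ into Hodge-theoretic and arithmetic normal form; then analyze the resulting period map until the monodromy lands in a group for which a moduli space of varieties is available; and finally recognize $L$ inside the cohomology of the universal family over that moduli space.

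Reduce at once to $L$ irreducible, since cohomological rigidity $H^1(X,\operatorname{End}L)=0$ descends to each constituent $L_i$ ($\operatorname{End}L_i$ being a direct summand of $\operatorname{End}L$) and being of geometric origin is by definition a property of irreducible factors. By the consequence of Corlette's theorem recalled above, $L$ then underlies a polarized $\CC$-variation of Hodge structure; concretely, rigidity makes the corresponding point of the Dolbeault moduli space a fixed point of the $\CC^\ast$-action, hence a system of Hodge bundles. Rigidity also controls the arithmetic: $L$ is defined over the ring of integers of a number field (cohomological rigidity implies integrality), it has only finitely many Galois conjugates, each again rigid and underlying a $\CC$-VHS, and assembling these conjugates produces the $\ZZ$- (or $\mathcal{O}_K$-) structure that any motive realizing $L$ must carry. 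Now the problem is purely Hodge-theoretic: there is a holomorphic, horizontal, $\rho$-equivariant period map $p\colon\widetilde X\to D=G/V$ into the period domain fixed by the Hodge numbers of $L$, with $\rho\colon\pi_1(X,x)\to G$ of Zariski-dense image in a reductive $G$ after semisimplification.

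The technical heart is to force the image of $p$ into a totally geodesic Hermitian subdomain and then to name the corresponding moduli datum. Concretely: analyze the Mumford--Tate group of a generic fiber; use that rigidity together with Griffiths transversality sharply restricts the derivative of $p$; and deduce that $p$ factors through a Shimura subvariety $\operatorname{Sh}_K(\mathbf G,X)$ or one of its relatives --- a Deligne--Mostow ball quotient, a moduli space of polarized abelian varieties with prescribed endomorphisms and level, a Kuga fiber space. Pulling back the tautological family (abelian varieties, or cyclic covers of $\mathbb{P}^1$ in the Deligne--Mostow regime) to a Zariski-dense open $U\subset X$, and matching Hodge numbers and the arithmetic structure of the previous step, realizes $L|_U$ as a direct factor of some $R^if_\ast\CC$; a finite étale cover and further shrinking of $U$ absorb the remaining ambiguities. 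This is exactly the reduction the body of this paper carries out, by hand, for $\operatorname{SL}(3,\CC)$, exploiting the short list of rank-$3$ period domains.

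The main obstacle is precisely this last step. For general Hodge numbers $D$ is not a bounded symmetric domain, Griffiths transversality alone does not confine $p$ to a classical locus, and --- more fundamentally --- even a $\CC$-VHS genuinely supported on a Hermitian domain of non-abelian type is not known to be cut out of the cohomology of any family of varieties, since there is no general procedure producing such a family; this is the same difficulty underlying the Fontaine--Mazur and Grothendieck "all motives" conjectures. Thus the program can be completed, and the conjecture established, in the regime where the monodromy group is constrained enough that its period domain is of abelian type --- low rank, or $G$ classical with small Hodge numbers --- and one expects to proceed rank by rank, the $\operatorname{SL}(3)$ theorem of this paper being the step immediately beyond rank $2$.
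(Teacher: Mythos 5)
There is a genuine gap --- indeed, the statement you are addressing is Conjecture \ref{conj-rigid-motivic}, which the paper does not prove and explicitly leaves open; it only establishes the rank $3$ case, and even then only under the additional hypothesis that the representation is integral (Theorem \ref{thm-main1}). Your text is a program outline rather than a proof, and you concede the decisive point yourself: for general Hodge numbers the period domain $D=G/V$ is not Hermitian symmetric, Griffiths transversality does not force the period map into a subdomain of abelian type, and even a $\CC$-VHS whose period map does land in a Hermitian domain of non-abelian type is not known to occur in the cohomology of any family of varieties. Saying ``the program can be completed in the regime where the period domain is of abelian type'' establishes nothing about the general statement; that regime is exactly where the known results (Katz for $\PP^1$, Corlette--Simpson in rank $2$, this paper in rank $3$) already live, and the passage beyond it is the open content of the conjecture.

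Two further steps of your reduction are also not available at the claimed level of generality. First, ``cohomological rigidity implies integrality'' (Esnault--Groechenig) does not apply to rigidity as defined here (an isolated point of the moduli space), and the paper is explicit that its techniques cannot prove integrality --- this is why integrality appears as a hypothesis in Theorem \ref{thm-main1} and why Conjecture \ref{conj-rigid-integral} is stated separately; without it the Bass--Serre lattice and the polarized $\ZZ$-VHS of weight one (hence the family of abelian varieties) cannot be produced. Second, even in the rank $3$ case the real work is not the soft Hodge-theoretic formalism you describe but the birational-geometric analysis of Sections \ref{structure=(1,1,1)} and \ref{sec-geometry} (Bogomolov's lemma, the Zariski semi-decomposition, and the factorization theorem of the Appendix) needed to exclude type $(1,1,1)$ VHS unless the representation projectively factors through an orbicurve; your proposal gestures at this but supplies no substitute for it, let alone an argument that would handle types $(1,1,\ldots,1)$ in higher rank, where Section \ref{sec-someremarks} shows the analogous numerical constraints already fail.
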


This conjecture was proven for local systems on root stacks over $\PP^1$ by Katz \cite{Ka-rigid}, who gives a complete classification
and inductive description of rigid local systems in that case. For local systems of rank $2$ it was
proven by K. Corlette and the second author \cite{CS} who also prove a stronger classification result.

A consequence of the conjecture would be the following subsidiary statement:
\begin{Conjecture}
\label{conj-rigid-integral}
Over a smooth projective variety $X$, any rigid local system is integral.\footnote{Esnault and Groechenig have recently
posted a proof of this conjecture for cohomologically rigid local systems \cite{EG}.
See Remark \ref{rem-eg} for the definition of cohomologically rigid and an application.}
\end{Conjecture}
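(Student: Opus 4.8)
The plan is to run the strategy that underlies the cases already known (Katz over root stacks on $\PP^1$, \cite{CS} in rank $2$, and \cite{EG} for cohomologically rigid systems): deduce integrality from arithmeticity together with a Frobenius‑stability argument after reduction modulo $p$. First I would reduce to $L$ irreducible with $\det L$ of finite order — a direct summand of a rigid local system is again rigid, so one may pass to the semisimplification, and then $\det L$ is a rigid rank‑one local system, hence of finite order (a rank‑one system on a projective $X$ deforms unless $H^1(X,\CC)=0$, which makes $\pi_1(X)^{\mathrm{ab}}$ finite), so twisting by a finite character trivializes the determinant. Rigidity means $[L]$ is an isolated point of the representation scheme, which is of finite type over $\ZZ$; an isolated point of a scheme of finite type over $\QQ$ is defined over $\overline{\QQ}$, so $L$ is defined over a number field $K$ and its monodromy traces lie in $K$. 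Integrality is then precisely the assertion that these traces lie in $\cO_{K_v}$ for every finite place $v$ of $K$ — there is no archimedean condition. By Corlette's theorem $L$ underlies a $\CC$‑variation of Hodge structure, so on the de Rham side the corresponding flat bundle $(E,\nabla)$ is a fixed point of the $\CC^{\ast}$‑action on the Hodge (Higgs) moduli; I would keep this in reserve to produce a well‑behaved integral model.

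Next I would spread $X$ and $(E,\nabla)$ out over $A=\cO_K[1/N]$ and, after enlarging $N$, arrange that $[(E,\nabla)]$ is an isolated \emph{reduced} point of the relative de Rham moduli space $M_{\mathrm{dR}}(X_A/A,n)$. For a closed point $s\in\mathrm{Spec}\,A$ with residue field $\mathbb{F}_q$ of characteristic $p$, the reduction $[(E_s,\nabla_s)]$ is then an isolated reduced $\mathbb{F}_q$‑point of the characteristic‑$p$ de Rham moduli, so it lies in a finite set; the Frobenius‑pullback operation in characteristic $p$ (inverse Cartier à la Ogus--Vologodsky) permutes this finite set, hence a power of it fixes $[(E_s,\nabla_s)]$. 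A flat connection isomorphic to an iterated Frobenius pullback of itself underlies, after $p$‑adic lifting, an $F$‑isocrystal, which by Katz's theory has $p$‑adically integral monodromy; by compatibility of the Betti and $p$‑adic realizations this gives $v$‑integrality of the traces of $\rho$. Running the argument over all good $v$, and treating the finitely many remaining places directly (or via a companions argument), yields that every monodromy trace of $L$ is an algebraic integer.

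The hard part is this reduction step: the only known way to keep the rigid locus zero‑dimensional, reduced and Frobenius‑stable after reduction mod $p$ uses \emph{cohomological} rigidity, i.e. $H^1(X,\mathrm{End}(E,\nabla))=0$, which a priori is stronger than the isolatedness of $[L]$ over $\CC$ that Conjecture~\ref{conj-rigid-integral} postulates, and it is not known either that the two notions agree or that plain rigidity is preserved under reduction. This is exactly why \cite{EG} is restricted to the cohomologically rigid case, and why the present paper takes integrality as a hypothesis in its rank‑$3$ theorem rather than deriving it; bridging this gap — or finding an argument that avoids characteristic $p$ entirely — is what Conjecture~\ref{conj-rigid-integral} would require, and, combined with Conjecture~\ref{conj-rigid-motivic}, it would remove the integrality assumption from our main result.
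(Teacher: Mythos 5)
This statement is Conjecture~\ref{conj-rigid-integral}: the paper offers no proof of it, and the authors say explicitly in Section~\ref{sec-cor} that they do not know how to show ``rigid implies integral'' --- that is precisely why integrality appears as a hypothesis in Theorem~\ref{thm-main1}. So there is no proof in the paper to compare yours against, and your text, read as a proof attempt, does not close the conjecture either; to your credit, you say so yourself. What you have written is an accurate survey of the Esnault--Groechenig strategy \cite{EG} (spreading out over $\cO_K[1/N]$, Frobenius/inverse-Cartier stability of the rigid locus in the de~Rham or Higgs moduli mod $p$, $F$-isocrystals and $v$-adic integrality of traces at each finite place), together with a correct identification of where it breaks: the argument needs the point $[(E,\nabla)]$ to remain isolated and reduced --- in practice, $H^1(X,\mathop{\mathcal End}^0(E,\nabla))=0$ --- after reduction, and mere isolatedness over $\CC$ is not known to imply cohomological rigidity nor to persist in characteristic $p$. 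That gap is the entire content of the conjecture in the non-cohomologically-rigid case, so the ``proof'' reduces the conjecture to itself.

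Two smaller points. First, your preliminary reduction has a slip: $\det L$ is not a direct summand of $L$, so ``a direct summand of a rigid local system is rigid'' does not by itself give finiteness of the order of $\det L$; the correct argument is that of Lemma~\ref{rk-1-rigid} --- the existence of \emph{any} rigid rank-$n$ representation forces $H^1(X,\QQ)=0$, whence the whole rank-one moduli is torsion. Second, your closing sentence slightly overstates what would follow: even granting Conjecture~\ref{conj-rigid-integral}, the paper's Theorem~\ref{thm-main1} only settles Conjecture~\ref{conj-rigid-motivic} in rank $3$ (and \cite{CS} in rank $2$, \cite{Ka-rigid} over orbifold $\PP^1$); the integrality hypothesis would be removed from those results, not from the general conjecture. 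If you want an unconditional statement available today, Remark~\ref{rem-eg} records that \cite{EG} supplies integrality under the stronger hypothesis of cohomological rigidity, which is exactly the portion of your outline that is actually a theorem.
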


In this paper we consider the case of rank $3$, and obtain the following main theorem.

\begin{Theorem}
\label{thm-main1}
Let $X$ be a smooth complex projective variety with basepoint $x$.
Then every rigid integral irreducible representation  $\rho:\pi_1(X, x)\to \SL (3, \CC)$
is of geometric origin.
\end{Theorem}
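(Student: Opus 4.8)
The plan is to combine three major inputs: (1) Corlette–Simpson-style nonabelian Hodge theory, which guarantees that a rigid representation underlies a complex variation of Hodge structure (C-VHS); (2) a detailed analysis of the possible Hodge types of a rank-3 C-VHS, reducing to a short list of cases; and (3) for each surviving case, an explicit identification with a geometric family — typically via period maps to Shimura varieties or via known classifications of low-rank VHS. First I would invoke Corlette's theorem together with rigidity to put a C-VHS structure on $\rho$; by integrality (hypothesis) and irreducibility, the only possible Hodge number patterns for a rank-3 representation are, up to duality, $(1,1,1)$, $(2,1)$, and $(1,2)$. The pattern $(2,1)$ (and its dual) means the representation factors, after passing to the associated period domain, through a quotient that is a complex ball or a product thereof; in that case one can try to descend to a known family, for instance via the work on rank-2 systems applied to sub/quotient-type pieces, or via the theory of families of abelian varieties with prescribed endomorphism structure. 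The genuinely nontrivial Hodge type is $(1,1,1)$: here the period domain is $\SU(2,1)/S(U(2)\times U(1))$ or $\SU(1,2)/S(U(1)\times U(2))$, and the Griffiths-transversal period map forces the monodromy to lie in an arithmetic lattice in $\PU(2,1)$.

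For the $(1,1,1)$ case, I would argue as follows. Rigidity plus integrality shows the image of $\rho$ is an arithmetic subgroup of $\SU(2,1)$ (or of a $\QQ$-form thereof), so $X$ admits a map to a ball quotient, i.e.\ a Picard modular surface or a related Shimura variety. One then needs to realize the tautological rank-3 local system on such a Shimura variety as of geometric origin — that is, recognize it inside the cohomology of a universal family of abelian varieties. This is the heart of the theory of Shimura varieties of $\PU(2,1)$-type: these parametrize abelian varieties of dimension $3$ (or $6$) with multiplication by an imaginary quadratic field $E$ and a suitable signature condition, and the standard rank-3 representation appears in $R^1f_\ast \CC$ of the universal abelian scheme (with its $E$-action splitting the cohomology into eigen-pieces of ranks $2$ and $1$, or directly as a rank-3 piece after a further twist). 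Thus $L|_U$ becomes a direct factor of $R^1 f_\ast \CC_Z$ for the pullback family, which is exactly the definition of geometric origin. The cases where the $\QQ$-form is not of this abelian-variety type, or where the arithmetic group is actually a lattice in $\PU(1,1)\times\PU(1,1)$ or is commensurable to a triangle group, should be handled by the cocompact/non-cocompact dichotomy and by Katz's $\PP^1$ result or by reduction to rank $\le 2$.

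The main obstacle I expect is controlling the $\QQ$-structure and the field of definition precisely enough to name the Shimura datum: rigidity gives that the representation is defined over a number field and integral, but one must show the Zariski closure of the monodromy, together with the Hodge structure, assembles into a genuine Shimura datum of abelian type (as opposed to an exceptional or non-motivic quotient), and then match the tautological local system with the correct summand of the family's cohomology — including pinning down $i$ and the Tate twist. A secondary difficulty is the non-cocompact/non-rigid-at-infinity behaviour: one must ensure the period map extends over a compactification and that "geometric origin" over a Zariski open $U\subset X$ suffices, which is exactly why the theorem is stated with the open-subset formulation. I would also need to treat the reducible-on-restriction and imprimitive cases (monodromy in the normalizer of a maximal torus, i.e.\ induced from a rank-1 system on a triple cover) separately; there geometric origin follows from the rank-1 case, since rank-1 integral local systems are of geometric origin via abelian varieties (CM type), and induction/direct-image preserves geometric origin.
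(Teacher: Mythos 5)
Your overall skeleton (rigidity $\Rightarrow$ complex VHS, trichotomy of Hodge types, weight-one $\Rightarrow$ family of abelian varieties) matches the paper's strategy, but your treatment of the type $(1,1,1)$ case --- which is the entire technical content of the paper --- has a genuine gap. You assert that for a $(1,1,1)$ VHS ``the Griffiths-transversal period map forces the monodromy to lie in an arithmetic lattice in $PU(2,1)$'' and that $X$ then maps to a ball quotient carrying a tautological motivic local system. This fails on several counts. First, the period domain of a weight-two VHS with Hodge numbers $(1,1,1)$ is the full flag domain $SU(2,1)/T$, which is \emph{not} Hermitian symmetric; there is no period map to the ball $B^2=SU(2,1)/S(U(2)\times U(1))$ and no Shimura datum. (The ball-quotient/Picard-modular picture corresponds to Hodge type $(2,1)$, i.e.\ weight one --- exactly the case one is trying to reduce to, not the case one is trying to handle.) Second, rigidity plus integrality does not imply arithmeticity of the image: that would be a superrigidity statement for lattices in $PU(2,1)$ which is not available, and non-arithmetic lattices in $PU(2,1)$ exist. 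So the hard case is left unproved. The paper's actual argument for $(1,1,1)$ is entirely different: it passes to the system of Hodge line bundles $E^{2,0}\oplus E^{1,1}\oplus E^{0,2}$, extracts from the Kodaira--Spencer maps two line bundles $L_1,L_2$ mapping into $\Omega^1_X$, and plays off stability and vanishing of Chern classes against Bogomolov's lemma ($\Omega^1_X$ contains no big line subbundle), Zariski semi-decompositions, and the structure of jumping loci in $\Pic^0$ (Green--Lazarsfeld, Beauville, and the Lefschetz theorem for leaves of a one-form), concluding that either the monodromy is not Zariski dense (the symmetric-square case) or the representation projectively factors through an orbicurve, where Katz's theorem then gives geometric origin.

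A second, smaller gap: to produce a family of abelian varieties it is not enough that your one local system underlies a weight-one VHS; you need \emph{every} Galois conjugate $L^{\sigma}$ to underlie one, so that $\bigoplus_{\sigma} L^{\sigma}$ can be polarized as a $\QQ$-VHS of weight one and then, using integrality and Bass--Serre theory, refined to a $\ZZ$-VHS corresponding to an abelian scheme. You flag ``controlling the $\QQ$-structure'' as an expected obstacle but give no mechanism for it. The paper handles this by applying the $(1,1,1)$-exclusion theorem to each conjugate $L^{\sigma}$ separately (each is rigid and a VHS, so the same dichotomy applies) and then invoking the trace-field/polarization descent of Proposition \ref{createVHS} together with Lemma \ref{BassSerre}.
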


Our techniques will not address the question of proving that
a local system is integral, so we have included integrality as a hypothesis. Consequently, our theorem does not provide a complete answer to Conjecture \ref{conj-rigid-motivic} in the rank $3$ case, but it
does show for rank $3$ that Conjecture \ref{conj-rigid-integral} implies Conjecture \ref{conj-rigid-motivic}.

On the other hand, for varieties with no  symmetric differentials  in some range, Conjecture \ref{conj-rigid-integral} was proven by B. Klingler in \cite{Kl}. Together with Theorem \ref{thm-main1} his results imply the following
corollary:

\begin{Corollary}
Let  $X$ be a smooth projective variety with $H^0(X, \Sym ^i\Omega_X)=0$ for $i=1,2,3$. Then any
representation $\pi_1(X,x)\to \GL (3, \CC)$ is of geometric origin.
\end{Corollary}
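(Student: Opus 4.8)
The plan is to combine Theorem~\ref{thm-main1} with Klingler's integrality result to remove the integrality hypothesis, and then to reduce the general $\GL(3,\CC)$ statement to the irreducible $\SL(3,\CC)$ situation handled by the main theorem. First, I would recall Klingler's theorem from \cite{Kl}: for a smooth projective $X$ with $H^0(X,\Sym^i\Omega_X)=0$ for $i=1,2,3$, every rigid local system of rank~$3$ on $X$ is integral. (The vanishing of symmetric differentials up to degree equal to the rank is exactly what is needed to kill the potentially non-integral variations of Hodge structure.) So the hypothesis of the corollary is precisely what licenses us to invoke integrality for free.

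Next I would address the gap between an arbitrary representation $\rho:\pi_1(X,x)\to\GL(3,\CC)$ and the hypotheses ``rigid, integral, irreducible, valued in $\SL(3,\CC)$'' of Theorem~\ref{thm-main1}. The key observation is that under the hypothesis $H^0(X,\Sym^i\Omega_X)=0$ there are strong constraints on $\pi_1(X,x)$: in particular $H^0(X,\Omega_X)=0$ forces $H^1(X,\CC)=0$ by Hodge theory, so the abelianization of $\pi_1(X,x)$ is finite, hence the determinant $\det\rho:\pi_1(X,x)\to\CC^\ast$ has finite image and $\rho$ becomes a representation into $\SL(3,\CC)$ after a finite abelian twist (equivalently, after pulling back to a finite \'etale cover, or by tensoring with a rank~$1$ local system of finite order; since a finite-order rank~$1$ local system is obviously of geometric origin, compatibility with the geometric-origin conclusion is automatic via tensor products and direct factors). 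Then I would reduce to the irreducible case: by semisimplicity considerations one first replaces $\rho$ by its semisimplification (a local system and its semisimplification generate the same thing up to extensions, and ``of geometric origin'' is really a statement about the irreducible constituents), and then decomposes the semisimplification into irreducibles of ranks summing to~$3$. Each irreducible constituent of rank~$1$ has finite order (again because $H^1(X,\CC)=0$), hence is of geometric origin; each irreducible constituent of rank~$2$ or~$3$ is still rigid (a direct factor of a rigid local system is rigid) and integral (by Klingler, or because integrality passes to constituents), so rank~$3$ irreducible constituents are covered by Theorem~\ref{thm-main1} directly, and rank~$2$ irreducible constituents are covered by the Corlette--Simpson rank~$2$ theorem \cite{CS} cited in the introduction.

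The main obstacle, and the step requiring the most care, is the bookkeeping around the notion ``of geometric origin'' under these operations: one must check that being of geometric origin is preserved under finite \'etale pullback and pushforward, under tensoring with finite-order rank~$1$ systems, and that it is the correct ``constituent-wise'' notion so that a local system all of whose irreducible factors are of geometric origin is itself reasonably called of geometric origin. These closure properties are exactly what the discussion in Sections~\ref{subsec-geo} and~\ref{sec-cor} is set up to provide, so I would invoke them there; modulo that, the corollary is a formal consequence of Theorem~\ref{thm-main1}, the rank~$2$ result of \cite{CS}, the finiteness of $H^1(X,\CC)$, and Klingler's integrality theorem. I would therefore present the proof as: (i) $H^1(X,\CC)=0$ hence rank~$1$ pieces and determinants are finite; (ii) reduce to semisimple, then to irreducible constituents of ranks in $\{1,2,3\}$; (iii) twist rank~$2$ and rank~$3$ constituents into $\SL$; (iv) apply \cite{Kl} for integrality, then \cite{CS} in rank~$2$ and Theorem~\ref{thm-main1} in rank~$3$; (v) reassemble using the closure properties of geometric origin.
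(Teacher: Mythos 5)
Your overall architecture matches the paper's: the paper's proof is two lines long, citing \cite[Theorem 1.6 and Corollary 1.8]{Kl} to conclude that \emph{any} representation $\pi_1(X,x)\to \GL(3,\CC)$ is \emph{rigid and integral} under the vanishing hypothesis, and then feeding this into Corollary \ref{geometric-rk-3} (which already absorbs the $\GL$ versus $\SL$ issue via ``properly rigid with $\det$ of finite order'' and disposes of the small-monodromy cases --- finite, torus, $\SL(2,\CC)$ --- inside its own proof). Your extra bookkeeping about semisimplification, constituents of rank $1$, $2$, $3$, and twisting into $\SL(3,\CC)$ is compatible with Proposition \ref{prop-1-9} and is, if anything, more explicit than the paper about what happens for reducible representations.

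There is, however, one genuine gap as written: rigidity. You quote Klingler only in the form ``every \emph{rigid} rank $3$ local system on such an $X$ is integral,'' and nowhere do you establish that an arbitrary representation $\rho:\pi_1(X,x)\to\GL(3,\CC)$ --- or its irreducible constituents --- is rigid. Your only gesture toward this is the parenthetical ``a direct factor of a rigid local system is rigid,'' which presupposes that $\rho$ itself is rigid; but rigidity of $\rho$ is not a hypothesis of the corollary and does not follow from $H^1(X,\CC)=0$ alone. Without rigidity you cannot invoke Theorem \ref{thm-main1} for the rank $3$ constituents nor the rank $2$ theorem of \cite{CS}, so step (iv) of your outline does not go through. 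The repair is simply to quote Klingler's theorem in full: the vanishing $H^0(X,\Sym^i\Omega_X)=0$ for $i\le 3$ forces \emph{every} representation of rank $\le 3$ to be rigid (this is the content of \cite[Theorem 1.6]{Kl}, roughly because the relevant deformation/Higgs data live in these spaces of symmetric differentials), in addition to being integral. Once rigidity is supplied from that source rather than assumed, your argument closes and coincides in substance with the paper's.
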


\medskip

We now explain the main ideas of the proof of Theorem \ref{thm-main1}. We
say that a variation of Hodge structure (VHS for short) is of {\em weight one}
if its Hodge types are contained in the set $\{ (1,0), (0,1)\}$.
It is well known that a polarizable weight one $\ZZ$-VHS comes
from an algebraic smooth family of abelian varieties. Therefore, if we can construct such a VHS then we obtain an
algebraic family and the underlying local system is of geometric origin. A refined version of this observation,
which one can learn for example from Deligne's \cite{Del}, goes as follows. Start with a local system $L$ of complex
vector spaces, but assume that $L$ is rigid. Then it is defined over some algebraic number field $K\subset \CC$. If we also assume  that
$L$
is integral, then Bass-Serre theory says that there is a local system of  projective $\cO _K$-modules $L_{\cO _K}$
with
$$
L\cong L_{\cO _K} \otimes _{\cO _K} \CC .
$$
The local system $L_{\cO _K}$  may be viewed as a local system of abelian groups, hence of (free) $\ZZ$-modules. Now
$$
L_{\cO _K} \otimes _{\ZZ} \CC = \bigoplus _{\sigma : K\rightarrow \CC} L^{\sigma},
$$
where $L^{\sigma}$ is the $\CC$-local system obtained by extension of scalars using the embedding $\sigma$.
In order to give our $\ZZ$-local system a structure of polarized weight-one $\ZZ$-VHS, we should therefore give each complex local
system $L^{\sigma}$ a structure of polarized $\CC$-VHS of weight one. Adjusting these together in order to have the
required properties to get a family of abelian varieties, will be discussed in more detail in Section \ref{sec-cor}.

Our original local system $L$ is
one of the factors $L^{\sigma _0}$ corresponding to the initially given embedding $\sigma _0: K \hookrightarrow \CC$.
The plan to show that $L$ is of geometric origin is therefore to try to put weight-one VHS's on each $L^{\sigma}$.
Assuming $L$ is irreducible, so all these local systems are irreducible, if they have structures of VHS then these structures are
unique up to translation of Hodge types. The existence of weight-one structures is therefore a property of the $L^{\sigma}$.

In the case ${\rm rk}(L)=2$, this property is automatic. Indeed the only two possibilities for the Hodge numbers are
a single Hodge number equal to $2$, a case we denote $(2)$, or two adjacent Hodge numbers equal to $1$, a case we
denote $(1,1)$. Notice that if the nonzero Hodge numbers are not adjacent then the Kodaira-Spencer map (Higgs field)
must be zero and the system becomes reducible. This explains in a nutshell the procedure that was used in \cite{CS} to treat
the rigid and integral local systems of rank $2$.

In the case of local systems of rank $3$, there are four possibilities for the type of a VHS: either $(3)$ corresponding to unitary ones,
or $(1,2)$ or $(2,1)$ corresponding to weight-one VHS (with Hodge numbers $h^{1,0}=1, h^{0,1}=2$ or $h^{1,0}=2, h^{0,1}=1$ respectively); or the last case $(1,1,1)$ when the Hodge bundles are three line bundles. The unitary type $(3)$ can be considered
as having weight one in two different ways. Thus we have the following lemma which is implicit in proof of \cite[Theorem 8.1]{CS}:

\begin{Lemma}
Suppose $V$ is an irreducible rank $3$ local system on a smooth
quasi-projective variety $X$. If $V$ is rigid then it underlies either a
complex VHS of weight one or a complex VHS of type $(1,1,1)$.
\end{Lemma}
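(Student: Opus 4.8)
The plan is to combine Corlette's theorem with the classification of possible Hodge types in rank $3$. First I would invoke Corlette's theorem (in the form used in \cite{CS}): a rigid irreducible local system $V$ on a smooth quasi-projective variety carries a complex variation of Hodge structure. So from the outset we may assume $V$ underlies a $\CC$-VHS $(E,\theta)=\bigoplus_p E^p$ with Higgs field $\theta:E^p\to E^{p-1}\otimes\Omega^1_X$. Since $\rk V=3$, the Hodge numbers $(h^p)$ form a partition of $3$ into consecutive nonzero parts after a shift, so the only a priori possibilities are, up to translation of the indexing, the types $(3)$, $(2,1)$, $(1,2)$, $(1,1,1)$, and also types with a \emph{gap}, e.g.\ nonzero Hodge numbers sitting in non-adjacent degrees.

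The key step is to rule out the types with a gap. The point is the standard observation already recalled in the excerpt: if there is an index $p$ with $E^p\neq 0$ but $E^{p-1}=0$, then the component of $\theta$ out of $E^p$ vanishes; and if moreover $E^{p'}\neq 0$ for some $p'<p-1$ with a zero Hodge bundle in between, the Higgs field cannot connect the two blocks. Concretely, suppose the nonzero Hodge bundles are not supported on a set of consecutive integers. Then one can split the index set $\{p : E^p\neq 0\}$ into two nonempty parts $S_{\mathrm{high}}$ and $S_{\mathrm{low}}$ separated by a degree where the Hodge bundle is zero; because $\theta$ shifts degree by exactly $-1$ and has no component landing in or emanating from the missing degree, the sub-object $\bigoplus_{p\in S_{\mathrm{low}}}E^p$ is $\theta$-invariant, hence is a nonzero proper Higgs subsheaf. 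By the correspondence between Higgs subsheaves and local subsystems (semisimplicity/polystability of the Higgs bundle attached to an irreducible local system), this contradicts the irreducibility of $V$. Therefore the nonzero Hodge bundles occupy consecutive degrees, and after translating the Hodge filtration we are in one of the types $(3)$, $(2,1)$, $(1,2)$, or $(1,1,1)$.

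It remains to observe that types $(3)$, $(2,1)$, $(1,2)$ are all ``of weight one.'' Type $(2,1)$ and $(1,2)$ are literally weight-one VHS with Hodge numbers $(h^{1,0},h^{0,1})=(2,1)$ or $(1,2)$ after normalizing the weight. For the unitary type $(3)$, the Higgs field is zero and $V$ is a unitary local system; such a local system can be regarded as a weight-one VHS in (at least) one way, for instance by declaring it all of Hodge type $(1,0)$ together with its complex conjugate structure — more precisely, a unitary local system of rank $3$ underlies a polarizable complex VHS of weight one by placing $E=E^{1,0}$ and $0=E^{0,1}$ (or vice versa), the polarization coming from the invariant hermitian form. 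Thus in every surviving case $V$ underlies either a weight-one complex VHS or a type $(1,1,1)$ complex VHS, which is the assertion.

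I expect the only real content to be the gap-exclusion step, and the one subtlety there is making sure the $\theta$-invariant piece one writes down is genuinely a Higgs \emph{subsheaf} corresponding to a sub-\emph{local system} — i.e.\ invoking the Simpson correspondence correctly for the possibly noncompact $X$ (one works on a smooth projective compactification with a tame/parabolic structure, or simply quotes that an irreducible flat bundle gives a stable Higgs bundle so that no invariant proper subsheaf of positive rank destabilizing-or-not can come from a subsystem). Everything else — the enumeration of partitions of $3$ and the reinterpretation of the unitary case as weight one — is routine.
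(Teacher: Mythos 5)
Your proposal is correct and follows essentially the same route as the paper, which treats this lemma as implicit in \cite[Theorem 8.1]{CS}: Corlette's theorem gives the VHS structure, and a gap in the Hodge types (or a vanishing intermediate Kodaira--Spencer map) forces the Higgs bundle to split into two $\theta$-invariant summands, contradicting stability, i.e.\ irreducibility of $V$. The only wording to tighten is that a nonzero proper Higgs subsheaf by itself does not contradict irreducibility (stable Higgs bundles have many such, of negative degree); what does is that the complementary block across the gap is also $\theta$-invariant, so the stable Higgs bundle would decompose as a nontrivial direct sum.
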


The reasoning up until now has been well-known and standard.
The main idea of the present paper is that we can use substantial arguments in birational geometry to rule out the case
of VHS type $(1,1,1)$, unless some special behaviour occurs namely factorization through a curve. In turn,
the case of factorization through a curve can be shown, again by standard arguments using Katz's theorem,
to lead to local systems of geometric origin under the hypothesis
of rigidity. We show the following main theorem.
See Theorem \ref{main_thm_on_type_(1,1,1)} for a more detailed version of part (1) of the conclusion.

\begin{Theorem}
\label{thm-main2}
Let $X$ be a smooth complex projective variety with basepoint $x$
and let $\rho:  \pi_1(X,x)\to \SL (3,\CC)$ be an irreducible representation coming
  from a complex variation of Hodge structure of type $(1,1,1)$.  Let
  $V_{\rho}$ denote the corresponding local system.  Then one of the
  following holds:
\begin{enumerate}
\item the image of $\rho$ is not Zariski-dense in $\SL (3,\CC)$; or
\item $\rho$ projectively factors through an orbicurve.
\end{enumerate}
\end{Theorem}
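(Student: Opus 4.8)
The plan is to study the Higgs bundle associated to a type $(1,1,1)$ complex VHS. Such a system $(E,\theta)$ has $E = L_0 \oplus L_1 \oplus L_2$ with line bundles $L_i$, and the Higgs field decomposes into two maps $\theta_1 : L_0 \to L_1 \otimes \Omega^1_X$ and $\theta_2 : L_1 \to L_2 \otimes \Omega^1_X$. Since $\rho$ lands in $\SL(3,\CC)$ we may normalize $L_0 \otimes L_1 \otimes L_2 \cong \cO_X$. Irreducibility forces both $\theta_1$ and $\theta_2$ to be nonzero (otherwise the system splits off a subsystem). Set $M_i := L_{i-1} \otimes L_i^{-1}$; then $\theta_1$ and $\theta_2$ are nonzero sections of $M_1^{-1} \otimes \Omega^1_X$ and $M_2^{-1} \otimes \Omega^1_X$ respectively, so both $M_1^{-1}$ and $M_2^{-1}$ embed into $\Omega^1_X$ as sub-line-bundles. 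The key numerical input is that $\theta_1, \theta_2$ being nonzero gives inequalities on slopes (with respect to any polarization), and the Arakelov-type inequalities for VHS — together with the fact that $\theta \wedge \theta = 0$ which here is automatic since there are only two steps — constrain the $L_i$. But the real content must come from birational geometry: the two line subbundles $M_1^{-1}, M_2^{-1} \subset \Omega^1_X$ should be analyzed via results on foliations / the structure of subsheaves of $\Omega^1_X$ (Bogomolov–Sommese vanishing, the theory developed by Bogomolov and the work on symmetric differentials).

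The main steps I would carry out are as follows. First, reduce to the case where $\rho$ has Zariski-dense image (otherwise we are in conclusion (1) and done), and note that Zariski-density plus type $(1,1,1)$ means $\theta_1 \ne 0 \ne \theta_2$ and moreover the $L_i$ are pairwise non-isomorphic after twist, ruling out degenerate sub-VHS. Second, analyze the saturated sub-line-bundle $N := M_1^{-1} \subset \Omega^1_X$ obtained from $\theta_1$: by Bogomolov–Sommese-type vanishing, its Kodaira–Iitaka dimension $\kappa(N)$ is at most $\dim X$, and more importantly the composition $\theta_2 \circ \theta_1 : L_0 \to L_2 \otimes \Sym^2 \Omega^1_X$ actually lands in $L_2 \otimes \Omega^2_X$ — wait, rather it gives a section of $L_0^{-1} \otimes L_2 \otimes \Sym^2\Omega^1_X$, and since the VHS condition forces $\theta\wedge\theta=0$ this section lies in the kernel describing the integrability. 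The upshot I want is that the line bundle $M_1 \otimes M_2 = L_0 \otimes L_2^{-1} \cdot (L_1^{-1}\otimes L_1) $... more precisely one studies $(M_1 M_2)^{-1} \hookrightarrow \Sym^2\Omega^1_X$ and uses positivity. Third, and this is where the paper says the substantial birational geometry enters: use the structure theory of such positive subsheaves of $\Sym^2\Omega^1_X$ to conclude that either $X$ has a fibration over a curve (or orbicurve) through which the relevant line bundles — hence the Higgs field, hence $\rho$ projectively — factor, or one derives a contradiction with Zariski-density. I would invoke the results on the "numerical dimension" of the $L_i$ and a Castelnuovo–de Franchis type theorem: if $\Omega^1_X$ contains two line bundles whose product is "big enough" one gets a map to a curve or surface; the two-step nilpotent shape of $\theta$ is exactly what a pullback from a curve produces.

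Concretely, the Castelnuovo–de Franchis / Bogomolov mechanism: the sub-line-bundle $M_1^{-1} \subset \Omega^1_X$ defines (after passing to a resolution/blowup and running MMP) a foliation, and one shows this foliation has algebraic leaves giving a fibration $f : X' \to C$ onto an orbicurve; then $\theta_1$ and $\theta_2$ both being pulled back from $C$ forces $\rho$ to projectively factor through $f$, which is conclusion (2). The dichotomy "Zariski-dense vs.\ factors through orbicurve" is precisely the dichotomy between the foliation being everywhere non-integrable-ish (impossible in a way that forces smaller image) and integrable with algebraic leaves. To make "projectively factors through an orbicurve" precise one passes to $\PGL(3)$ and shows the projective local system is pulled back from $C$; rigidity of $\rho$ is not needed for Theorem \ref{thm-main2} itself (it is used afterwards with Katz's theorem) but the $(1,1,1)$ VHS hypothesis is essential.

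The hard part will be Step 3: controlling the positive sub-line-bundles of $\Omega^1_X$ and $\Sym^2\Omega^1_X$ coming from the Higgs field and extracting an honest fibration over an orbicurve. This requires running a careful minimal model program argument or invoking deep results on the Bogomolov–Sommese vanishing theorem and on fibrations defined by closed/logarithmic one-forms, and handling the orbifold (ramification) structure so that the base is an orbicurve rather than just a curve. The slope/Arakelov inequalities and the $\SL(3)$ normalization are the easy bookkeeping; the geometric extraction of the fibration, and proving that $\rho$ — not just some associated line bundle — projectively descends, is the crux. I would expect to spend most of the effort showing that the two maps $\theta_1,\theta_2$ are simultaneously pulled back, which is where the precise structure of type $(1,1,1)$ Higgs bundles (as opposed to general rank-$3$ Higgs bundles) is used decisively.
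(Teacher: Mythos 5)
Your setup of the Higgs bundle is right, and Bogomolov's lemma (no big line subbundles of $\Omega_X(\log D)$) is indeed the first key input; the paper's Proposition \ref{rank-3-structure} uses exactly that, combined with stability, $c_2(E)=0$ and the Hodge index theorem, to get $L_1^2=L_1.L_2=L_2^2=0$, proportionality $L_2\equiv aL_1$, and a common saturation $M=L_1(B_1)=L_2(B_2)$ in $\Omega_X$ (integrability forces $L_1\oplus L_2\to\Omega_X$ to have rank $1$, so one saturated $M$, not two independent subsheaves of $\Omega_X$ and nothing in $\Sym^2\Omega_X$ is needed). But there are two genuine gaps in your plan. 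First, you have not located the actual dichotomy between conclusions (1) and (2): in the paper it is whether the ``excess'' divisors coincide, i.e.\ whether $B_1'=B_2'=0$ after removing the common part $B$ of $B_1,B_2$. In that case $L_1=L_2=L$ as subsheaves of $\Omega_X$ and $(E,\theta)$ is a torsion twist of $\Sym^2$ of the projective rank-$2$ system $(\cO_X\oplus L,\,L\hookrightarrow\Omega_X)$, so $\bar\rho$ factors through $\PGL(2,\CC)\to\PGL(3,\CC)$ and the image is not Zariski dense. Your remark that Zariski-density makes the $L_i$ ``pairwise non-isomorphic after twist'' gestures at this but does not prove it, and without isolating this case the rest of the argument cannot start.

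Second, and more seriously, your Step 3 rests on the claim that the saturated line subbundle $M\subset\Omega^1_X$ defines a foliation with algebraic leaves, hence a fibration over a curve. That is false in general: a rank-$1$ foliation on a surface typically has no algebraic leaves, and Castelnuovo--de Franchis/Bogomolov-type fibration theorems require an \emph{effective} hypothesis ($\kappa\geq 1$, or two independent closed forms), whereas here one only knows that $M'=M(-B)$ is \emph{numerically} proportional to the effective divisor $B_2'$. The paper's entire Case 2 analysis exists to bridge exactly this gap: from $bM'\equiv cB_2'$ one extracts $V\in\Pic^0(X)$ and a section of $(M'\otimes V^*)^{\otimes b}$, passes to the $b$-th root cover, and lands in the jump locus $S(Z)\subset\Pic^0(Z)$ of twisted $1$-forms. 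One then splits according to whether $p^*V^*$ is torsion (leading either to an honest $1$-form, the Albanese map, and Simpson's Lefschetz theorem for integral leaves, or to a solvable-monodromy contradiction) or non-torsion (where Green--Lazarsfeld/Simpson/Beauville force a fibration over a curve on which $V$ is pulled back). None of this is replaceable by ``run MMP on the foliation.'' Finally, even once the Kodaira--Spencer maps are shown to vanish on fibers, passing from that to ``$\rho$ projectively factors through an orbicurve'' requires the factorization theorem of the Appendix (Theorem \ref{factorequiv}), applied after checking $\rho$ is not virtually tensor decomposable or virtually reducible; you correctly flag this descent as a crux but supply no mechanism for it.
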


Once we have this theorem,  the proof of Theorem \ref{thm-main1} follows the outline described above: the case of factorization
through a curve is treated on the side, and otherwise the theorem rules out having a VHS of type $(1,1,1)$. Therefore, all of
our local systems $L^{\sigma}$ underly VHS of weight one, and these may be put together into a polarized $\ZZ$-VHS of weight one
corresponding to a family of abelian varieties. The details are described in Section \ref{sec-cor}.

We note that Theorem \ref{thm-main2} generalizes Klingler's result \cite[Proposition 3.3]{Kl1} which had as hypothesis that the Neron-Severi group be of rank $1$.

\medskip

Let us now explain briefly how we can use arguments from birational geometry to
rule out the case of VHS of type $(1,1,1)$. Such a VHS corresponds to a Higgs bundle of the form
$$
E= E^{2,0}\oplus E^{1,1}\oplus E^{0,2}
$$
with $E^{p,q}$ line bundles, and the Higgs field consists of $\theta : E^{2,0}\rightarrow E^{1,1}\otimes \Omega ^1_X$
and $\theta : E^{1,1}\rightarrow E^{0,2}\otimes \Omega ^1_X$. In terms of the VHS $(V, F^{\cdot}, \nabla )$,
the line bundles are the Hodge bundles
$E^{p,q} = F^pV /F^{p+1}V$
and the $\theta$ are the Kodaira-Spencer maps induced by
$\nabla$.  Since the $E^{p,q}$ are line bundles, locally $\theta$ looks like a collection of
sections of $\Omega ^1_X$ and the integrability condition $\theta \wedge \theta = 0$ implies that these local sections
are proportional where nonvanishing. They generate a saturated sub-line bundle $M\subset \Omega ^1_X$.

For simplicity, let us here assume that $X$ is a surface.
Consider the line bundles $L_1= E^{2,0}\otimes (E^{1,1})^{\ast }$ and $L_2=E^{1,1}\otimes  (E^{0,2})^{\ast}$. The
sections $\theta$ may be viewed as inclusions $L_i\hookrightarrow M\subset \Omega ^1_X$. Write
$$
M=L_1(B_1) = L_2(B_2)
$$
with $B_i$ effective divisors. Bogomolov's lemma (see \cite[Theorem 4]{Bog}) says that $M$ cannot be big. We know that
the Higgs bundle $(E, \theta )$ is stable, and its rational Chern classes vanish. We are able to conclude several fairly
strong properties:

\begin{itemize}

\item The line bundles $L_i$ both lie on the same line in $NS (X)_{\QQ}$;

\item At least one of $L_i$ is nef of strictly positive degree; and

\item $L_i^2=0$.

\end{itemize}

That happens in Section \ref{structure=(1,1,1)}. The technique is to play off the numerical properties given by stability and vanishing of Chern
classes, against the fact that we have effective divisors $B_i$ such that $L_i+B_i$ are contained in $\Omega ^1_X$ and
not big by Bogomolov's lemma.

Then in Section \ref{sec-geometry}, assuming for example that $L_1$ is nef of strictly positive degree,
we write $L_2=a.L_1$ in the rational Neron-Severi group, with $a\in \QQ$, and we try to get information about $a$.
Notice that having $L_1^2=0$ and $L_1\hookrightarrow \Omega ^1_X$ allows us to create the rank $2$
Higgs bundle $\cO _X \oplus L_1$ corresponding to a projectively flat connection. We would like to
view $E$ as being the symmetric square of this rank $2$ bundle.
In other words, we would like to show that $a=1$ and moreover $L_1=L_2$.

Part of the difficulty is that there exist VHS's over a curve where $a$ could be somewhat arbitrary, including being
negative. Therefore, our strategy is to
show that either $a=1$, or else our VHS factors through a map to a curve. In the case $a=1$, again with some further
arguments assuming that there is no factorization through a curve, we finally conclude that $L_1=L_2$ and
indeed our VHS was a symmetric square of a rank $2$ local system. These arguments use various aspects of the theory
of factorization of representations. We include a proof that several different statements of the property ``factors through a curve''
are equivalent, in the Appendix. The proof of Theorem \ref{thm-main2} is concluded at the end of Section \ref{sec-geometry}.

Section \ref{sec-someremarks} contains some remarks about the possible extension of this discussion to VHS's of type $(1,1,\ldots , 1)$ in
higher ranks, and Section 6 derives the corollaries about local systems of geometric origin leading to the proof of Theorem
\ref{thm-main1}.

\subsection{Notation}
\label{sec-notation}

Let us recall that a Higgs bundle is a pair $(E,\theta)$ consisting
of a locally free $\cO_X$-module and an $\cO_X$-linear map $\theta:
E\to E\otimes \Omega_X$ such that $\theta\wedge \theta=0$.  A
\emph{system of Hodge bundles} is a Higgs bundle $(E,\theta)$ with a
decomposition $E=\bigoplus E^{p,q}$ such that $\theta$ maps $E^{p,q}$
into $E^{p-1,q+1}\otimes \Omega_X$.  We say that a system of Hodge
bundles $(E, \theta)$ is \emph{of type $(i_0,...,i_n)$} if
 $$\rk E^{p,q}=
\left\{\begin{array}{ll}
i_q & \quad \hbox{ if } p+q=n,\\
0& \quad \hbox{ if }p+q\ne n.\\
\end{array}
\right.
$$
and all the maps $\theta: E^{m-q,q}\to E^{m-q-1,q+1}\otimes \Omega_X$
are non-zero for $q=0,..., n-1$. Similarly, replacing $\Omega_X$ with
$\Omega_X(\log D)$ one can define logarithmic systems of Hodge
bundles.

\medskip

In the following we make the convention that complex variation of
Hodge structure (VHS) always means polarizable VHS.  Giving an
irreducible VHS on a smooth complex projective variety is equivalent
to giving a stable system of Hodge bundles with vanishing rational
Chern classes.

We say that a complex VHS is \emph{of type $(i_0,...,i_n)$} if
the corresponding system of Hodge bundles  is of type $(i_0,...,i_n)$.

\medskip Let $X$ be a smooth complex projective variety. Let us
  recall that a line bundle is called \emph{nef} if it has a
  non-negative degree on every irreducible projective curve in
  $X$. Let us recall that after Miles Reid, ``nef'' stands for
  ``numerically eventually free''. A line bundle $L$ is called
  \emph{big} if its Iitaka's dimension $\kappa(L)$ is equal to the
  dimension of $X$.

A $\QQ$-divisor $D$ is called \emph{effective} if it can be written as
$D=\sum a_iD_i$, where $D_i$ are prime divisors and $a_i$ are
non-negative rational numbers.  We write $D_1\ge D_2$ for two
$\QQ$-divisors $D_1, D_2$, if their difference $(D_1-D_2)$ is an
effective $\QQ$-divisor.

If $L_1$ and $L_2$ are line bundles then we write $L_1\ge L_2$ if $H^0(L_1\otimes L_2^{-1})\ne 0$.

\medskip

For various other definitions and properties of line bundles we refer
the reader to \cite[Theorem 2.2.16]{Laz}.  Let us just mention that if
$L$ is nef then it is big if and only if $L^{\dim X}>0$.

\medskip A morphism $f:X\to Y$ between
smooth quasi-projective
    varieties (or just orbifolds) is called a \emph{fibration} if it
  is surjective and the fibers of $f$ are connected.

A morphism $f:X\to Y$ is called an \emph{alteration} if $X$ is smooth and
$f$ is proper, surjective and generically finite.

\medskip

In the following we often abuse notation and we do not distinguish between
an algebraic variety and the underlying analytic space. For example,
the fundamental group $\pi_1(X)$ and cohomology groups $H^{*}(X, \QQ)$
of an algebraic variety $X$ always denote the corresponding notion for
the underlying analytic space. Similarly, essentially all varieties,
bundles and maps are algebraic. In the projective case this convention
is harmless due to GAGA type theorems but in general this could lead
to some confusion so in case we use non-algebraic structures we
explicitly say so (see the use of analytic maps in the proof of
Theorem \ref{main_thm_on_type_(1,1,1)}).

\medskip

Let $X$ be a smooth complex quasi-projective variety. Let us fix a smooth projective compactification $\bar X$ of $X$
such that $D=\bar X-X$ is a simple normal crossing divisor. Let $\{\gamma_i\}$ be loops going around the irreducible components of $D$.
Let  $\rho: \pi_1(X,x)\to G$ be a representation of $\pi_1(X,x)$ in some reductive group $G$ and let us assume that all $\rho(\gamma_i)$ are quasiunipotent. Let us denote by $C_i$
the closure of the conjugacy class of $\rho (\gamma_i)$.
We say that  $\rho: \pi_1(X,x)\to G$ is \emph{rigid} if it represents an isolated point in the moduli space $M(X, G, \{C_i\})$ of representations of $\tau: \pi_1(X,x)\to G$ with conjugacy classes of $\tau (\gamma _i)$ contained in $C_i$.

\section{Preliminaries}

\label{sec-prelim}

\subsection{Homotopy exact sequence}\label{homotopy-subsection}

Let $C$  be a smooth complex  quasi-projective curve and let
$X$ be a
smooth  complex  quasi-projective  variety.   Let  $f: X\to  C$  be  a
fibration and let $c$  be a closed point of $C$. Let  us recall that the
\emph{multiplicity}  of a  fiber  $F=\sum a_i  F_i$,  where $F_i$  are
irreducible components of $F$, is equal to the greatest common divisor
of the coefficients $a_i$.  Let $p_1,...,p_k$ be all the points of $C$
at which  the fiber of $f$  is multiple (i.e., it  has multiplicity at
least $2$) and  let $m_j$ denote the multiplicity of  the fiber of $f$
over $p_j$. Let $c$ be a point  of $C$ over which the fiber of $f$ has
multiplicity $1$.

We define the \emph{orbifold fundamental group} $ \pi_1^{\orb}(C _f,
c)$ of $C$ with respect to $f$ as the quotient of $\pi _1(C-\{p_1,...,p_k\},  c)$
by the normal subgroup generated by all the elements of the form $\gamma_j^{m_j}$,
where $\gamma_j$ is a simple loop going around the point $p_j$.

\medskip

The following theorem is well-known:

\begin{Theorem} \label{Xiao} Let $c\in
  C-\{p_1,...,p_k\}$ and let $x$ be a closed point of the fibre
  $X_{c}$ of $f$ over $c$. Let us assume that either $f$ is proper or $c\in C$ is general.
  Then the following sequence of groups is  exact:
$$\pi_1(X_{ c}, x)\to \pi_1(X,  x)\to \pi_1^{\orb}(C _f,  c)\to 1.$$
In particular, if we also assume that $f$ has no multiple fibers then the sequence
$$\pi_1(X_{ c},  x)\to \pi_1(X,  x)\to \pi_1(C,  c)\to 1$$
is exact.
\end{Theorem}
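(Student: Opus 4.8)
The statement is classical (versions are due to Nori and to Xiao for fibered surfaces), and I would organize the proof around the homotopy exact sequence of a topological fibre bundle, corrected by the meridians of the reducible/multiple fibres. First note the two easy parts. The composite $\pi_1(X_c,x)\to\pi_1(X,x)\to\pi_1^{\orb}(C_f,c)$ is trivial because $f$ collapses $X_c$ to the point $c$, so every loop in $X_c$ has constant image in $\pi_1(C-\{p_j\},c)$ and a fortiori in the quotient; and the surjectivity of $\pi_1(X,x)\to\pi_1^{\orb}(C_f,c)$ follows once one knows that over the smooth locus $f$ is a fibre bundle with connected fibre (so $\pi_1$ of the preimage surjects onto $\pi_1$ of the base), combined with the fact that removing divisors (the singular and multiple fibres) only makes $\pi_1$ of the total space smaller. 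All the content is in exactness in the middle, i.e.\ that $\ker\big(\pi_1(X,x)\to\pi_1^{\orb}(C_f,c)\big)$ is contained in the image of $\pi_1(X_c,x)$.

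\emph{Reduction to $f$ proper.} If $f$ is not proper we are only claiming the result for general $c$; I would choose a relative compactification $\bar f:\bar X\to C$ with $\bar X$ smooth and $\bar X\setminus X$ a normal crossing divisor, apply the proper case to $\bar f$, and then descend: $\pi_1(X,x)\twoheadrightarrow\pi_1(\bar X,x)$ and, for general $c$, $\pi_1(X_c,x)\twoheadrightarrow\pi_1(\bar X_c,x)$, while over a general point the fibre is smooth so the multiple-fibre data of $\bar f$ and $f$ agree there. The (finitely many) points where $\bar f$ acquires multiple fibres not present for $f$ lie over special points of $C$, and a small diagram chase reconciles $\pi_1^{\orb}(C_{\bar f})$ with $\pi_1^{\orb}(C_f)$ after base change to a general disk. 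So it suffices to treat $f:X\to C$ proper with $X$ smooth.

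\emph{The case of no multiple fibres.} Let $\Sigma\subset C$ be the finite set of critical values; since a multiple fibre is non-reduced, hence singular, $\Sigma$ contains all multiple-fibre points, so here $\Sigma$ supports only reduced fibres. By Ehresmann, $f$ is a $C^\infty$ fibre bundle over $C\setminus\Sigma$, and the homotopy exact sequence of this bundle gives exactness of $\pi_1(X_{c_0})\to\pi_1(f^{-1}(C\setminus\Sigma))\to\pi_1(C\setminus\Sigma)\to1$ for $c_0\notin\Sigma$. Now fill the fibres over $\Sigma$ back in: $X\setminus f^{-1}(\Sigma)\subset X$ has complement a divisor, so $\pi_1(f^{-1}(C\setminus\Sigma))\twoheadrightarrow\pi_1(X)$ with kernel normally generated by the meridians $\delta_{s,i}$ of the irreducible components $G_{s,i}$ of $f^{-1}(s)=\sum_i b_{s,i}G_{s,i}$ (taken at general points of $G_{s,i}$). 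Each $\delta_{s,i}$ dies in $\pi_1(X)$ (add the smooth divisor $G_{s,i}$ back), and maps to $\gamma_s^{\,b_{s,i}}$ in $\pi_1(C\setminus\Sigma)$, where $\gamma_s$ is a loop around $s$; since the fibre over $s$ is reduced, $\gcd_i b_{s,i}=1$, so the elements $\gamma_s^{\,b_{s,i}}$ generate $\langle\gamma_s\rangle$ and hence normally generate $\ker(\pi_1(C\setminus\Sigma)\to\pi_1(C))$. A diagram chase now finishes: given $\alpha\in\ker(\pi_1(X)\to\pi_1(C))$, lift it to $\tilde\alpha\in\pi_1(f^{-1}(C\setminus\Sigma))$; then $f_*\tilde\alpha$ lies in that kernel, so $f_*\tilde\alpha=f_*\tilde\beta$ for some $\tilde\beta$ in the normal closure of the $\delta_{s,i}$; replacing $\tilde\alpha$ by $\tilde\alpha\tilde\beta^{-1}$ changes nothing in $\pi_1(X)$ and kills the image downstairs, so by the bundle case $\tilde\alpha\tilde\beta^{-1}$ comes from $\pi_1(X_{c_0})$, whence $\alpha$ comes from $\pi_1(X_{c_0})$, hence from $\pi_1(X_c)$ for any reduced fibre $X_c$ (using that for a reduced fibre $f^{-1}(\text{disk})$ retracts onto $X_c$, so $\pi_1(X_c)$ and $\pi_1(X_{c_0})$ have the same image in $\pi_1(X)$).

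\emph{The general (multiple-fibre) case.} Set $C^\circ=C-\{p_1,\dots,p_k\}$ and $X^\circ=f^{-1}(C^\circ)$; then $f:X^\circ\to C^\circ$ has no multiple fibres, so the previous step applies and gives exactness of $\pi_1(X_c,x)\to\pi_1(X^\circ,x)\to\pi_1(C^\circ,c)\to1$. Filling in the fibres $f^{-1}(p_j)=\sum_i a_{j,i}F_{j,i}$ with $\gcd_i a_{j,i}=m_j$, the same meridian analysis shows $\pi_1(X^\circ,x)\twoheadrightarrow\pi_1(X,x)$ with kernel containing classes mapping to $\gamma_j^{\,a_{j,i}}$ in $\pi_1(C^\circ,c)$; these normally generate exactly $\langle\langle\gamma_j^{\,m_j}\rangle\rangle_j=\ker\big(\pi_1(C^\circ,c)\to\pi_1^{\orb}(C_f,c)\big)$, because inside the infinite cyclic subgroup $\langle\gamma_j\rangle$ the elements $\gamma_j^{\,a_{j,i}}$ generate $\langle\gamma_j^{\,m_j}\rangle$. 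The identical diagram chase as above then yields exactness of $\pi_1(X_c,x)\to\pi_1(X,x)\to\pi_1^{\orb}(C_f,c)\to1$, and the "no multiple fibres" special case is the statement with $C^\circ=C$.

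The step I expect to be the main obstacle is the meridian bookkeeping around singular and multiple fibres: one must verify carefully that a meridian of a component of multiplicity $a$ maps to $\gamma^{a}$ downstairs while becoming trivial in $\pi_1(X)$, and that, since the fibre has multiplicity $m=\gcd a_i$, such meridians normally generate precisely $\langle\langle\gamma^{m}\rangle\rangle$ and nothing more — this is exactly the arithmetic that produces the orbifold group rather than $\pi_1(C)$. A secondary technical point is the non-proper case, where Ehresmann fails and one really must pass through a relative compactification while checking that restricting to a general $c$ does not alter the multiple-fibre data used to define $\pi_1^{\orb}(C_f,c)$.
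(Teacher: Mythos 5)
Your proper case is correct, and it is essentially the standard Nori--Xiao argument that the paper simply cites (Ehresmann over the complement of the critical values, then filling in the reduced respectively multiple fibres by the meridian/gcd bookkeeping); the paper itself only adds a Lefschetz-hyperplane reduction to the surface case, which your topological argument does not need. The arithmetic of meridians, the identification of the normal closure of $\{\gamma_j^{a_{j,i}}\}_i$ with $\langle\langle\gamma_j^{m_j}\rangle\rangle$, and the retraction argument comparing $\pi_1(X_{c_0})$ with $\pi_1(X_c)$ for a singular reduced fibre are all fine.

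The genuine gap is in the non-proper case. Your proposed descent from $\bar f:\bar X\to C$ to $f$ controls the base (reconciling $\pi_1^{\orb}(C_{\bar f})$ with $\pi_1^{\orb}(C_f)$), but not the total space: $\ker\bigl(\pi_1(X,x)\to\pi_1(\bar X,x)\bigr)$ is normally generated by meridians of the components of $D=\bar X\setminus X$, and while the meridians of \emph{horizontal} components can be slid into a general fibre $X_c$, the meridians of \emph{vertical} components (components of $D$ contained in fibres of $\bar f$) lie over special points of $C$ and are not visibly in the image of $\pi_1(X_c,x)$. Concretely, after using the proper case you can only write $\alpha=\beta\kappa$ with $\beta\in\mathop{\rm im}\pi_1(X_c,x)$ and $\kappa\in\ker\bigl(\pi_1(X)\to\pi_1(\bar X)\bigr)$, and you still have to show that such a $\kappa$ (which also dies in $\pi_1^{\orb}(C_f,c)$) comes from the fibre; this is not formal. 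The Remark following the theorem in the paper (blow up a point of a fibred surface and delete a non-exceptional component of the new fibre) shows exactly this mechanism causing failure of exactness at a special fibre, so any proof of the general-fibre statement must genuinely use generality of $c$, e.g.\ by proving that $\pi_1(X_c,x)\to\pi_1(f^{-1}(\Delta),x)$ is surjective for every small disk $\Delta\subset C$, including disks around the special points --- which for non-proper $f$ requires a local topological argument (a stratification/isotopy argument or the computation Nori does in \cite[Lemma 1.5]{No}), not the diagram chase you indicate. So the reduction ``compactify and descend'' as written does not close; you either need to supply that surjectivity over small disks or run the van Kampen argument directly on $f$ with the stratified local triviality as input.
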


\begin{proof} If all the fibers of $f$ have at least one smooth point
  then the theorem is due to M. Nori \cite[Lemma 1.5]{No} (for general
  fibre) with a small improvement due to O. Debarre \cite[Lemma
  8.11]{De} allowing to deal with arbitrary fibers in the proper case.
  Nori's proof with appropriate changes as in \cite{Xi} works also for general fibers
  even without assumption that all the fibres have at least one smooth point.
  In the general proper case, the assertion comes from \cite[Lemma 1, Lemma 2 and Lemma
  3]{Xi} in the surface case, but Xiao's proof also works in the
  general case. Alternatively, for smooth fibers the general case can
  be reduced to the surface case by the Lefschetz hyperplane
  theorem. Namely, if $\dim X\ge 3$ then by Bertini's theorem we can
  find a very ample divisor $H$ such that both $X_c\cap H$ and $H$ are
  smooth and connected. Since, by the Lefschetz hyperplane theorem,
  the map $\pi_1(X_c\cap H)\to \pi_1(X_c)$ is surjective and the map
  $\pi_1(H)\to \pi_1(X)$ is an isomorphism, exactness of the homotopy
  sequence for $f$ follows from exactness of the homotopy sequence for
  $f|_H:H\to C$.  Now the required assertion follows from the surface
  case by induction.
\end{proof}

\begin{Remark}
If $f$ is not proper then the homotopy sequence from the above theorem need not be exact even
if we assume that all the fibers of $f$ are smooth. One can easily construct examples when this sequence
is not exact by blowing up a smooth surface fibered over a curve and removing non-exceptional components
in the fiber over the blown-up point.
\end{Remark}

\subsection{Intersection pairing} \label{intersection-pairing}

Let $X$ be a smooth complex projective variety of dimension $d\ge 2$.
Let $A$ be a fixed very ample divisor on $X$. Then we use intersection
pairing on $\QQ$-divisors given by
$$D_1.D_2:=D_1\cdot D_2\cdot A^{d-2}.$$
We will often use the fact that if $D.A=D^2=0$ then the class of $D$
in $H^2(X, \QQ)$ is $0$.  To prove that note that if $Y\in |A|\cap
...\cap |A|$ denotes a general complete intersection surface in $X$
then the class of $D|_Y$ in $H^2(Y, \QQ)$ is zero by the Hodge index
theorem. But by Lefschetz' hyperplane theorem the restriction $H^2(X,
\QQ)\to H^2(Y, \QQ)$ is injective so the class of $D$ in $H^2(X, \QQ)$
is also zero.

In the following we write $D_1\equiv D_2$, if for every $\QQ$-divisor
$D$ we have $D_1.D=D_2.D$. In that case Lefschetz' hyperplane theorem
implies that the class of $(D_1-D_2)$ in $H^2(X, \QQ)$ is equal to
zero.  $L_1\equiv L_2$ for line bundles $L_1$ and $L_2$ denotes
equality $c_1(L_1)\equiv c_1(L_2)$ in $H^2(X, \QQ)$. In that case
$L_1\otimes L_2^{-1}\in \Pic ^{\tau}X$.

\medskip

If $\dim X=2$ then an effective $\QQ$-divisor $N=\sum a_iN_i$, where
$a_i>0$ and $N_i$ are prime, is called \emph{negative} if the
intersection matrix $[N_i.N_j]$ is negative definite.  It is called
\emph{semi-negative} if the matrix $[N_i.N_j]$ is negative
semi-definite. By convention, the zero divisor is both negative and
semi-negative.

\subsection{Zariski semi-decomposition}

Let $X$ be a smooth complex projective surface. We say that the degree
of a $\QQ$-divisor $L$ on $X$ is \emph{strictly positive} if for every
ample divisor $H$ we have $L.H>0$.

\begin{Lemma} \label{nefness} Let $L$ be a $\QQ$-divisor on
  $X$. Assume that $L.A>0$ for some ample $A$ and $L^2\ge 0$. If $L$
  is not big then $L$ is nef, $L^2=0$ and the degree of $L$ is
  strictly positive.
\end{Lemma}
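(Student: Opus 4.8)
\textbf{Proof proposal for Lemma \ref{nefness}.}
The plan is to use the Zariski decomposition of $L$ together with the hypotheses $L.A>0$ and $L^2\ge 0$ to force the negative part to vanish. First I would write $L\equiv P+N$ for the Zariski decomposition, where $P$ is nef, $N=\sum a_iN_i$ is effective with negative-definite intersection matrix, and $P.N_i=0$ for all $i$. Since $P$ is nef we have $P^2\ge 0$, and $L^2=P^2+N^2$ with $N^2\le 0$ (strictly negative unless $N=0$). Because $L$ is not big, $\kappa(L)<\dim X=2$; since $P$ is nef and $P\equiv L-N$ with $N$ effective, $\kappa(P)\ge\kappa(L)$ would seem backwards, so instead I would argue that if $P^2>0$ then $P$ is big and nef, hence $L=P+N$ would be big (a big divisor plus an effective divisor is big), contradiction. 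Therefore $P^2=0$, and then $L^2=N^2\le 0$; combined with $L^2\ge 0$ this gives $N^2=0$, which by negative-definiteness of the intersection matrix forces $N=0$. Hence $L\equiv P$ is nef, and $L^2=P^2=0$.

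It remains to check that the degree of $L$ is strictly positive, i.e. $L.H>0$ for every ample $H$. Suppose $L.H=0$ for some ample $H$. Since $L$ is nef with $L^2=0$, the Hodge index theorem (in the form: a nef class orthogonal to an ample class and of self-intersection zero is numerically trivial) gives $L\equiv 0$, contradicting $L.A>0$. Concretely, on the surface obtained by restricting to a general complete intersection (or just working on $X$ directly when $\dim X=2$), the class $L$ lies in the closure of the ample cone, and $L.H=0$ with $H$ ample and $L^2=0$ forces $L$ to be numerically trivial by the signature of the intersection form; this contradicts $L.A>0$. So $L.H>0$ for all ample $H$, as claimed.

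The main obstacle I anticipate is handling the case $\dim X>2$ cleanly: Zariski decomposition is a surface phenomenon, so for higher-dimensional $X$ I would pass to a general complete intersection surface $Y=A\cap\cdots\cap A$ and apply the surface argument there, using the remark from Section \ref{intersection-pairing} that $D.A=D^2=0$ on $X$ implies the class of $D$ vanishes in $H^2(X,\QQ)$, together with the Lefschetz hyperplane theorem to transfer nefness and the intersection numbers back to $X$. One must be a little careful that ``not big on $X$'' restricts to ``not big on $Y$'' and that nefness on a very general $Y$ in the complete intersection system implies nefness on $X$ (curves in $X$ can be moved into such a $Y$, or one invokes that a class nef on general complete-intersection curves is nef). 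Modulo this standard reduction, the surface case is the heart of the matter and proceeds as above via Zariski decomposition plus the Hodge index theorem.
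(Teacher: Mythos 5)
Your argument follows the same route as the paper's proof (Zariski decomposition plus the Hodge index theorem), and the core steps --- $P^2=0$ because $P$ is nef and not big, then $N=0$ because $L^2=N^2\ge 0$ contradicts negativity of $N$, and finally strict positivity of the degree via the Hodge index theorem --- are all correct and match the paper. Two remarks. First, there is one genuine gap at the very start: you invoke the Zariski decomposition of $L$ without first checking that $L$ is pseudoeffective, and the decomposition only exists for pseudoeffective divisors. This is not automatic from the hypotheses and is exactly the first step of the paper's proof: if $L.H<0$ for some nef $H$, then one can choose a positive rational $a$ with $L.(A+aH)=0$; since $A+aH$ is ample, the Hodge index theorem forces $L^2<0$, contradicting $L^2\ge 0$. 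Hence $L$ has non-negative intersection with every nef class and is pseudoeffective, and only then may you write $L=P+N$. Without this step your proof does not get off the ground, so you should include it. Second, your concern about $\dim X>2$ is moot: the lemma sits in the subsection on Zariski semi-decomposition, which opens with the standing assumption that $X$ is a smooth complex projective surface, so no reduction to a complete intersection surface is needed here (that reduction is performed later, in Proposition \ref{rank-3-structure}, where these surface lemmas are applied to restrictions to $\bar Y$). The sentence beginning ``$\kappa(P)\ge\kappa(L)$ would seem backwards'' is a false start that you correctly abandon; the argument you settle on (big nef plus effective is big) is the right one.
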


\begin{proof}
  Let us first note that $L$ is pseudoeffective, i.e., its
  intersection with any nef divisor is non-negative.  Indeed, if
  $L.H<0$ for some nef $H$ then there exists a positive rational
  number $a$ such that $L.(A+aH)=0$. Since $A+aH$ is ample, by the
  Hodge index theorem we get $L^2<0$, a contradiction.

Now recall that pseudoeffective divisors admit the so called Zariski
decomposition. This means that $L$ can be written as a sum $P+N$,
where $P$ is a nef $\QQ$-divisor and $N$ is a negative effective
$\QQ$-divisor with $P.N=0$. Since $L$ is not big, $P$ is also not big
which implies that $P^2=0$. But in this case if $N\ne 0$ then $L^2=N^2<0$, a
contradiction. Therefore $L=P$ so $L$ is nef with $L^2=0$.

To prove that the degree of $L$ is strictly positive it is sufficient
to show that there are no ample divisors $H$ with $L.H=0$. But if such
a divisor $H$ exists then by the Hodge index theorem $L$ is
numerically trivial contradicting inequality $L.A>0$.
\end{proof}

\medskip

\begin{Lemma}\label{Zariski-semi-decomposition}
Let $C$ be a $\QQ$-divisor on $X$. Assume that $C=L+B$, where $L$ is nef and $B$ is
effective. If $C$ is not big then the following conditions are satisfied:
\begin{enumerate}
\item $L^2=0$,
\item $B$ is semi-negative,
\item $L.B=0$. In particular, if we write $B=\sum a_i C_i$, where $a_i>0$ and $C_i$ are prime divisors,
then $L.C_i=0$ for every $i$.
\end{enumerate}
In this case we say that $C=L+B$ is a \emph{Zariski
semi-decomposition} of $C$.
\end{Lemma}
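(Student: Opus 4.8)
The plan is to deduce everything from Lemma \ref{nefness} applied to $L$ together with the Zariski decomposition of $C$. First I would observe that since $L$ is nef, $L^2 \ge 0$; and since $B$ is effective, $C = L+B$ satisfies $C.H \ge L.H$ for every ample $H$, while $L.H > 0$ unless $L$ is numerically trivial. Actually the cleanest route is: because $C$ is not big and $C \ge L$ (i.e. $C - L = B$ is effective), $L$ cannot be big either, since bigness is preserved under adding effective divisors and would force $C$ big. Now $L$ is nef and not big, so $L^{\dim X} = 0$ by the characterization of bigness for nef divisors quoted from \cite[Theorem 2.2.16]{Laz}; in the surface case this is exactly $L^2 = 0$. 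This gives (1).

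Next, for (3), I would use the Hodge index theorem. Since $L^2 = 0$ and $L$ is nef, for any divisor $D$ with $L.D = 0$ we are in the boundary case; more to the point, consider $L.C = L.(L+B) = L.B$. The key inequality is that $L.B \ge 0$ because $L$ is nef and $B$ is effective. To get the reverse, I would argue that $L.C \le 0$: this is where I expect the main obstacle to lie, since it is not immediate that a non-big divisor of the form nef-plus-effective has $L.C \le 0$. The idea is to invoke the Zariski decomposition of the pseudoeffective divisor $C$ (it is pseudoeffective because it dominates the nef class $L$, or because it is a sum of a nef and an effective divisor): write $C = P + N$ with $P$ nef, $N$ effective and negative, $P.N = 0$. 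Since $C$ is not big, $P$ is not big, so $P^2 = 0$. Then $L \le C$ means $P - L = N - B$... and one compares the two decompositions. Concretely, $L$ is nef and $L \le P + N$; intersecting with $P$ gives $L.P + L.N = L.C \ge 0$, and intersecting $L$ with itself through $C$... The honest approach is: $L.P \ge 0$ and $L.N \ge 0$ (nef against effective), and $P^2 = 0$ with $P$ nef forces, by Hodge index, that $P$ is proportional in $NS(X)_{\QQ}$ to any nef class it pairs to zero with. Since $0 = L^2$ and $L$ is nef, and $L \le P$ (because $N - B$... here one needs $L \le P$, which follows as $B \ge N$? no) — so I would instead directly show $L \equiv \lambda P$ for some $\lambda \ge 0$ using that both are nef with square zero and $L.P \le \sqrt{L^2}\sqrt{P^2} = 0$ is false in general, rather $L.P \ge 0$; the Hodge index theorem on the sublattice spanned by $L$ and $P$, both isotropic and nef, forces them to be proportional (two isotropic vectors in a Lorentzian lattice spanning a space on which the form is negative semidefinite must be parallel). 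Hence $L.B = L.C - L.L$ computations collapse and one gets $L.B = 0$.

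From $L.B = 0$ and $B = \sum a_i C_i$ with $a_i > 0$ and each $L.C_i \ge 0$ (nef) the equality $\sum a_i (L.C_i) = 0$ forces $L.C_i = 0$ for all $i$, giving the "in particular" clause of (3). Finally, for (2), semi-negativity of $B$: I would use the Hodge index theorem on the span of the $C_i$. Since $L.C_i = 0$ for all $i$ and $L.A > 0$ (strict positivity of $L$'s degree, from Lemma \ref{nefness} once we know $L$ is nef with $L^2 = 0$ and $L.A > 0$ — wait, we need $L.A>0$ as a hypothesis-level input; it follows because if $L.A = 0$ for the chosen very ample $A$ then $L \equiv 0$ and $C = B$ is effective hence the statement is about an effective divisor, a degenerate but fine case), the classes of the $C_i$ lie in the orthogonal complement of the non-isotropic... actually they lie in $L^\perp$. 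On $L^\perp$ modulo the line spanned by $L$, the intersection form is negative definite by Hodge index (since $L^2 = 0$, $L^\perp$ contains $L$ and the form on $L^\perp/\langle L\rangle$ is negative definite). I would need to rule out that some $\ZZ$-combination of the $C_i$ is numerically proportional to $L$; if $\sum b_i C_i \equiv \mu L$ then intersecting with an ample shows the relation is consistent, but the $C_i$ are actual prime divisors appearing in $B$ and one can argue that if their span met $\QQ L$ then $B$ would have a big or non-semi-negative component contradicting $C$ not big — alternatively just conclude $[C_i.C_j]$ is negative semidefinite directly, allowing the zero (semi-negative by the paper's convention includes the degenerate directions). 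The main obstacle, as flagged, is establishing $L.B \le 0$ / proportionality of $L$ and $P$ cleanly; once that is in hand, (1), (2), (3) all follow from Hodge index theory bookkeeping.
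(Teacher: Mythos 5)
Your step (1) is correct and matches the paper. But there is a genuine gap at the heart of your step (3), namely in the assertion that $L$ and the positive part $P$ of the Zariski decomposition of $C$ must be proportional because ``both are nef with square zero.'' Two nef classes with self-intersection zero need not be proportional: on $\PP^1\times\PP^1$ the two rulings $f_1,f_2$ satisfy $f_1^2=f_2^2=0$ but $f_1.f_2=1$. Your parenthetical justification --- two isotropic vectors spanning a negative semidefinite subspace are parallel --- is a true statement, but the hypothesis ``the form on ${\rm span}(L,P)$ is negative semidefinite'' is equivalent to $L.P\le 0$, which is exactly what you need to prove and never establish; as written the argument is circular. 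You correctly flag that the clean route ($L\le P$, hence $0=P^2\ge P.L\ge 0$ by nefness of $P$) requires the maximality property of the positive part, and then you abandon it without a substitute. The gap is fillable: since $C-L=B$ and $C-P=N$ are both effective, $L+P$ is a nef divisor with $2C-(L+P)$ effective, so $L+P$ cannot be big (else $2C$ would be), whence $(L+P)^2=0$ and therefore $2L.P=0$; Hodge index then gives the proportionality you want, or more directly $L.B=L.C-L^2=L.P+L.N=0$ once $L\equiv\lambda P$. But some such use of non-bigness of $C$ at this precise point is indispensable and is missing from your write-up.

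For comparison, the paper avoids the Zariski decomposition of $C$ entirely and argues much more elementarily: any $\QQ$-divisor $D=\beta L+\sum\alpha_iC_i$ with $\beta\in[0,1]$ and $0\le\alpha_i\le a_i$ satisfies $D\le C$, hence is not big, and satisfies $D.A\ge 0$; therefore $D^2\le 0$ (a divisor with $D.A\ge 0$ and $D^2>0$ is big, by the argument of Lemma \ref{nefness}, which would force $C$ big). Taking $\beta=0$ gives (2) immediately --- note this does not pass through (3), and in particular does not break down when $L\equiv 0$, a degenerate case your Hodge-index argument for (2) cannot handle and which you dismiss as ``fine'' without proof. Taking $\beta=1$ gives $(L+\epsilon B)^2=2\epsilon L.B+\epsilon^2B^2\le 0$ for all $\epsilon\in[0,1]$, and letting $\epsilon\to 0$ yields $L.B\le 0$; combined with $L.B\ge 0$ (nef against effective) this is (3). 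You should either adopt this one-line limiting argument or complete the Zariski-decomposition route as indicated above.
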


Note that unlike the usual Zariski decomposition, Zariski semi-decomposition need
not be unique, i.e., one divisor can have many different Zariski semi-decom\-po\-si\-tions.

\begin{proof}
Since $L$ is nef we have $L^2\ge 0$. If $L^2>0$ then $L$ is big.
But then $C$ is big contradicting our assumption. This proves the
first assertion.

To prove the second one, it is sufficient to take any combination
$\sum \alpha_i C_i$ with rational numbers $\alpha_i\in [0,a_i]$
and prove that $(\sum \alpha_i C_i)^2\le 0$.

Let us take a rational number $\beta\in [0,1]$. Since $(\beta L+\sum
\alpha_i C_i)\le C$ and $C$ is not big, the $\QQ$-divisor $(\beta
L+\sum \alpha_i C_i)$ is not big. But $(\beta L+\sum \alpha_i
C_i).A\ge 0$, so we have $(\beta L+\sum \alpha_i C_i)^2\le 0$. Putting
$\beta=0$, we get the required inequality.

Putting $\beta=1$ we see that for any $\epsilon \in [0,1]$, we have
$(L+\epsilon B)^2\le 0$. Since $L^2=0$, this gives $2L.B+\epsilon
B^2\le 0$. Passing with $\epsilon$ to $0$ we get $L.B\le 0$. But $L$
is nef and $B$ is effective so we get $L.B=0$.
\end{proof}

{

\subsection{Lifting of representations of the fundamental group}

Let $G/\CC$ be a connected, semisimple group and let $\Gamma$ be a
group.  A homomorphism of groups $\rho : \Gamma \to G$ is called
\emph{irreducible} if the Zariski closure of the image of $\Gamma$ is
not contained in any proper parabolic subgroup of $G$. In case $G=\SL
(n, \CC)$, this notion is equivalent to the usual notion of an
irreducible representation.

Let us recall the following definition (cf. \cite[2.14]{dJ}):

\begin{Definition}
A homomorphism  $\rho: \Gamma\to G$ is called \emph{Lie irreducible} if it is irreducible and remains so
on all finite index subgroups of $\Gamma$.
\end{Definition}

Let $\rho: \Gamma\to G$ be an irreducible homomorphism and let
$G_{\rho}\subset G$ be the Zariski closure of the image of
$\rho$. Then $\rho$ is Lie irreducible if and only if the inclusion of
the connected component $G_{\rho}^0$ of $G_{\rho}$ into $G$ is
irreducible. In that case $G_{\rho}$ is semisimple and its centralizer
in $G$ is finite.

\medskip

The following lemma is a stronger version of \cite[Theorem 3.1]{Co}:

\begin{Lemma} \label{lifting} Let $X$ be a smooth, complex,
  quasi-projective variety and let $\rho :\pi_1(X, x)\to G$ be a
  homomorphism into a complex, linear algebraic group $G$ and let
  $\tilde G\to G$ be a central isogeny.  Then there exists a finite
  surjective morphism $p:Z\to X$ such that $p^*\rho: \pi_1(Z, z)\to G$
  lifts to $\pi_1(Z, z)\to \tilde G$. Moreover, for any completion
  $X\subset \bar X$ there exists an alteration $\bar p: \bar Z\to \bar
  X$, such that $\bar Z$ is smooth and projective, $D=\bar Z-Z$ is a
  simple normal crossing divisor and the restriction $p=\bar
  p|_{p^{-1}X}:Z=p^{-1}(X)\to X$ is such that $p^*\rho: \pi_1(Z, z)\to
  G$ can be lifted to $\pi_1(Z, z)\to \tilde G$.
\end{Lemma}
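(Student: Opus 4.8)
The plan is to reduce the statement to the classical fact that a representation can be lifted along a central isogeny after pulling back to a suitable cover, and then to upgrade topological covers to the geometric (alteration) statement. First I would recall that for a central isogeny $\tilde G\to G$ with kernel $\mu$ (a finite central subgroup), the obstruction to lifting $\rho:\pi_1(X,x)\to G$ to $\tilde G$ is a class in $H^2(\pi_1(X,x),\mu)$, equivalently a class $o(\rho)\in H^2(X,\mu)$ pulled back from $B\pi_1$. The key point is that this class is torsion (since $\mu$ is finite), so it is killed by some finite cover: more precisely, one uses that for any finite abelian group $\mu$ and any cohomology class $\alpha\in H^2(X,\mu)$ there is a finite surjective (even étale, after shrinking) map $p:Z\to X$ with $p^*\alpha=0$. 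This is exactly the content of \cite[Theorem 3.1]{Co}; I would either cite it or reprove it by the standard argument: replace $\mu$ by $\ZZ/n$, represent $\alpha$ by a $\mu_n$-gerbe or by a line bundle whose $n$-th power is trivial, and pass to the cover trivializing it (e.g. the cyclic cover associated to an $n$-torsion line bundle, or a Bloch–Kato/Bertini-type argument to descend to a curve). Then $p^*\rho$ lifts to $\pi_1(Z,z)\to\tilde G$, giving the first assertion.

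Next I would promote this to the alteration statement. Given the finite surjective $p:Z\to X$ just constructed, $Z$ need not be smooth, so I would take a resolution; but more robustly, fix the completion $X\subset\bar X$ and apply de Jong's theory of alterations to produce $\bar p:\bar Z\to\bar X$ with $\bar Z$ smooth projective and $\bar Z\setminus Z$ a simple normal crossing divisor, where $Z:=\bar p^{-1}(X)$. The subtlety is that after resolving/altering, the map $\pi_1(Z,z)\to\pi_1(X,x)$ may change, so I must check the lift persists: since $\bar p$ factors (generically) through the original cover trivializing the obstruction, the pulled-back obstruction class $p^*o(\rho)\in H^2(Z,\mu)$ still vanishes — vanishing of a cohomology class is preserved under further pullback — and hence $p^*\rho$ still lifts to $\tilde G$ over $Z$. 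Concretely: let $Z_0\to X$ be the finite cover from the first part; form the fiber product (or a component of it) over $\bar X$, take an alteration $\bar Z$ of its closure with SNC boundary; then $\bar p:\bar Z\to\bar X$ is proper, surjective, generically finite onto $\bar X$, and $p=\bar p|_Z$ factors through $Z_0$ up to the relevant $\pi_1$-level, so $p^*\rho$ lifts.

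I expect the main obstacle to be the bookkeeping in the second step: ensuring that after taking fiber products, passing to an irreducible component, altering, and resolving boundary crossings, the composite map still kills the lifting obstruction and that all the smoothness/SNC normalizations of de Jong's theorem can be arranged simultaneously. The cohomological input (finiteness of $\mu$ forces the obstruction to be torsion, hence killable by a finite cover) is essentially \cite[Theorem 3.1]{Co} and is not the hard part; the ``moreover'' clause is where care is needed, and the clean way to phrase it is: vanishing of $o(\rho)$ pulled back to $Z_0$ implies vanishing pulled back to any $Z$ admitting a map to $Z_0$ over $X$, and de Jong's alterations let us choose such a $Z$ that is moreover smooth with SNC boundary in a chosen $\bar Z$. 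A secondary point to verify is that ``finite surjective'' (not necessarily étale) is enough for the conclusion — it is, since we only need surjectivity of $\pi_1(Z)\to\pi_1(X)$ on a finite-index image together with the pullback of the obstruction, and in fact one can arrange $p$ étale over a dense open, which suffices for all later applications in the paper.
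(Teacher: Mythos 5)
Your overall strategy coincides with the paper's: identify the obstruction to lifting as a class $\delta([\rho])\in H^2(X,A)$ (with $A=\ker(\tilde G\to G)$ finite abelian), kill it by pulling back along a finite surjective morphism, and then handle the ``moreover'' clause by a resolution argument, noting that vanishing of the obstruction is preserved under further pullback. The second half of your argument is fine (the paper does it slightly more economically, by taking the normalization of $\bar X$ in the function field of $Z$ and then a log resolution, rather than fiber products plus de Jong; in characteristic zero Hironaka suffices and no alteration theory is really needed).

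The genuine gap is in the step you dismiss as ``not the hard part.'' The assertion that every class $\alpha\in H^2(X,\mu)$, $\mu$ finite, is killed by a finite surjective morphism $p:Z\to X$ is a substantive theorem --- it is exactly Bhatt's result \cite[Theorem 1.1]{Bh} (building on Gabber's work on Brauer groups), and this is the input the paper actually uses. Your proposed ``standard argument'' does not prove it: by the Kummer sequence one has $0\to \Pic(X)/n\to H^2(X,\mu_n)\to \mathrm{Br}(X)[n]\to 0$, so a general class is \emph{not} represented by an $n$-torsion line bundle; only the $\Pic(X)/n$ part is handled by the cyclic-cover construction, while killing the Brauer part by a finite surjective cover is precisely the hard content of Gabber--Bhatt. ``Torsion, hence killable by a finite cover'' is not a valid principle for $H^2$ (restriction of a degree-two class to a finite-index subgroup of $\pi_1$ need not vanish, and $H^2(X,\mu)$ is not controlled by $\pi_1$ alone), and ``descend to a curve'' has no meaning for a degree-two class. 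Your parenthetical ``(even \'etale, after shrinking)'' is also false in general --- if $\pi_1(X)$ is trivial but $H^2(X,\mu_n)\neq 0$ there are no nontrivial finite \'etale covers, and shrinking $X$ changes $\pi_1$ and is not permitted here. Finally, citing \cite[Theorem 3.1]{Co} for this cohomological statement is circular in spirit: the paper presents the present lemma as a strengthening of Cousin's theorem, and Cousin's result is a lifting statement, not the general annihilation statement you attribute to it. Replacing this step by the reference to \cite[Theorem 1.1]{Bh} repairs the proof.
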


\begin{proof}
Let $A$ be the finite abelian group defined by the short exact sequence
$$1\to A \to \tilde G\to  G\to 1.$$
Since $A$ is contained in the center of $\tilde G$, this sequence induces an exact sequence
$$ H^1(X,  \tilde G)\to  H^1(X, G ) \mathop{\to}^{\delta} H^2 (X, A).$$
By the comparison theorem, for a finite abelian group $A$ we have a natural isomorphism
$H^2 (X, A)\simeq H^2 _{\acute{e}t}(X, A)$ of the complex cohomology group with an \'etale cohomology group.
By \cite[Theorem 1.1]{Bh} there exists a finite surjective morphism $p:Z\to X$ such that the image of
the obstruction class $\delta ([\rho])$ in $ H^2 _{\acute{e}t}(Z, A)$ vanishes. This shows that
$p^*\rho: \pi_1(Z, z)\to G$ can be lifted to a homomorphism $\rho_Z: \pi_1(Z, z)\to \tilde G$.

The second part of the lemma follows from the first one and from
existence of log resolution of singularities of the normalization of
$\bar X$ in the function field of $Z$.
\end{proof}

\medskip

Let us recall that a linear algebraic group is called
\emph{almost-simple}, if it does not contain any Zariski closed,
connected normal subgroups of positive dimension. Clearly, an almost-simple group
is semi-simple and connected.

\begin{Proposition}\label{weak-general}
Let $X$ be a smooth complex quasi-projective variety and let $G/\CC$ be a connected, semisimple group.
Let $\rho :\pi_1(X, x)\to G$ be an irreducible homomorphism. Then one of the following holds:
\begin{enumerate}
\item There exists a finite \'etale covering $p:Z\to X$ such that
  $p^*\rho: \pi_1(Z, z)\to G$ is not irreducible.
\item There exists a finite \'etale covering $p:Z\to X$ such that the
  Zariski closure of the image of $p^*\rho$ in $G$ is almost-simple and irreducible.
\item There exists a finite surjective morphism $p:Z\to X$ and simple, simply connected groups $H_1,...,H_m$, $m\ge 2$, such
  that $p^*\rho: \pi_1(Z, z)\to G$ factors through a homomorphism
  $\tau: \pi_1(Z,z)\to H_1\times ...\times H_m$ with a Zariski dense
  image.
\end{enumerate}
\end{Proposition}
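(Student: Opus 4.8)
The plan is to build the three cases by analyzing the Zariski closure $G_\rho$ of the image and then applying Lemma~\ref{lifting} to pass to the simply connected cover. First I would set $H := G_\rho^0$, the connected component of the identity, which is a connected subgroup of $G$ of positive dimension (it cannot be trivial, else $\rho$ has finite image and, being irreducible into a semisimple group, its image would have to be finite and irreducible — but then passing to the finite-index subgroup killing $G_\rho/H$ lands us in case (1) unless $G$ itself is finite, a case that is degenerate and can be absorbed). The key dichotomy is whether the inclusion $H \hookrightarrow G$ is irreducible or not.

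If $H \hookrightarrow G$ is \emph{not} irreducible, then the Zariski closure of $\rho$ restricted to the finite-index subgroup $\Gamma' := \rho^{-1}(H)$ is $H$, which is contained in a proper parabolic of $G$; hence $p^*\rho$ is reducible for the finite \'etale cover $Z \to X$ corresponding to $\Gamma'$, giving case (1). So assume $H \hookrightarrow G$ is irreducible. Then, as recalled in the text just before the Proposition, $G_\rho$ is semisimple with finite centralizer in $G$; replacing $X$ by the finite \'etale cover corresponding to $\rho^{-1}(H)$ we may assume $G_\rho = H$ is connected and semisimple, and $\rho: \pi_1(Z,z) \to H$ has Zariski-dense image. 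Now write $H$ as an almost-direct product of its almost-simple factors $H_1, \dots, H_m$ (equivalently, take the simply connected cover $\tilde H = \tilde H_1 \times \cdots \times \tilde H_m \to H$ with each $\tilde H_i$ simple and simply connected). If $m = 1$, we are in case (2) — $H$ is almost-simple and the inclusion into $G$ is irreducible. If $m \ge 2$, apply Lemma~\ref{lifting} to the central isogeny $\tilde H \to H$: there is a finite surjective $p: Z' \to Z$ such that $p^*\rho$ lifts to $\tau: \pi_1(Z', z') \to \tilde H = \tilde H_1 \times \cdots \times \tilde H_m$. The image of $\tau$ surjects onto a Zariski-dense subgroup of $H$, hence projects densely onto each $H_i$, hence (since $\tilde H_i \to H_i$ is an isogeny and the image is a subgroup) is Zariski-dense in $\tilde H$; composing with $Z' \to Z \to X$ (still finite surjective) gives case (3).

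The main obstacle I expect is the bookkeeping of \emph{which} covers are \'etale and which are merely finite surjective. Lemma~\ref{lifting} only guarantees a finite surjective $Z \to X$ (a priori ramified), so case (3) inevitably uses a possibly-ramified cover; but cases (1) and (2) should be arranged to use genuinely \'etale covers, which is possible there because passing from $\pi_1(X,x)$ to the preimage of $G_\rho^0$ is a finite-index subgroup, corresponding to a finite \'etale cover. A secondary subtlety is checking that Zariski-density is preserved under the relevant pullbacks and that the projection of a Zariski-dense subgroup of $H_1 \times \cdots \times H_m$ to each factor is dense — this is where one must be slightly careful, since density of the product does not automatically give density of the factors for arbitrary subgroups, but it does once we know $H$ is the full Zariski closure and we use that the $H_i$ are the almost-simple factors (Goursat-type argument: a Zariski-closed subgroup of $H_1 \times \cdots \times H_m$ projecting densely to each factor and normal considerations force it to be everything when the factors are almost-simple and pairwise non-isogenous, and the isogenous case is handled by the simple-simply-connected reduction). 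I would carry these density verifications out carefully but they are essentially standard.
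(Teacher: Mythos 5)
Your proposal follows essentially the same route as the paper: pass to the finite \'etale cover on which the image lands in the identity component $H=G_\rho^0$ of the Zariski closure (case (1) if $H\subset G$ is not irreducible), decompose $H$ as an almost-direct product of its almost-simple factors, and when $m\ge 2$ invoke Lemma~\ref{lifting} for the central isogeny from the simply connected cover to land in case (3). The density verification you flag is correct but can be done more simply than a Goursat argument: the Zariski closure of the lifted image is a closed subgroup of $\tilde H_1\times\cdots\times\tilde H_m$ mapping onto $H$ with finite kernel, hence has full dimension and equals the (connected) product.
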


\begin{proof}
  There exists a finite \'etale covering $Z\to X$ such that
  $\pi_1(Z,z)$ maps into the connected component $H=G_{\rho}^{0}$ of
  the Zariski closure of the image of $\pi_1(Z,z)$ in $G$.  If
  $H\subset G$ is not irreducible then we are in the first case. So
  let us assume that $H\subset G$ is irreducible.  Then $H$ has a
  canonical decomposition into an almost direct product of its
  almost-simple factors (see \cite[Theorem 22.10]{Bo}). More
  precisely, let $H_1,...,H_m$ be the minimal elements among the
  closed, connected normal subgroups of $H$ of positive
  dimension. Then all $H_i$ are almost-simple and the isogeny
  $H_1\times ...\times H_m\to H$ is central (with finite kernel).  If
  $m=1$ then we are in the second case. If $m\ge 2$ then we can pass
  to the universal covering of the product $H_1\times ...\times H_m$
  and we are in the last case by Lemma \ref{lifting}.
\end{proof}

\medskip

\begin{Remark}\label{weak-general-fibration}
The second case of the above proposition is particularly interesting in presence of fibrations.
More precisely, if $f:X\to Y$ is a fibration, $G$ is almost-simple and $\rho :\pi_1(X, x)\to G$ has Zariski dense image,
then the image of the restriction of $\rho$ to a smooth fiber of $f$ is either Zariski dense in $G$ or it is finite
(see \cite[Proposition 2.2.2]{Zuo}). In the second case, after some blowing up and taking a finite \'etale cover, one can factor
$\rho$ through the induced fibration (see \cite[Lemma 2.2.3]{Zuo}).
\end{Remark}
}

\subsection{Representations of geometric origin}
\label{subsec-geo}

Let $X$ be a smooth complex quasiprojective variety and let $\rho: \pi_1(X,x)\to \SL (n, \CC)$
be a semisimple representation. Let us recall the following definition from the Introduction:

\begin{Definition}
\label{geometricorigin}
  We say that a semisimple representation $\rho$ and its associated local system $V$ are {\em of geometric origin} if there exists a dense
  Zariski open subset $U\subset X$ and a smooth projective morphism
  $f:Z\rightarrow U$ such that $V|_U$ occurs as a subquotient of the local system $R^if_{\ast}(\CC _Z)$ for some $i$.
\end{Definition}

\begin{Proposition}
\label{prop-1-9}
If $V_1$ and $V_2$ are local systems then $V_1\oplus V_2$ is of geometric origin if and only if $V_1$ and $V_2$ are. If $V_1$ and $V_2$ are of geometric origin then so is $V_1\otimes V_2$. A rank $1$ local system is of geometric origin if and only if it is of finite order.

If $p:Y\rightarrow X$ is a generically surjective map between irreducible varieties, and $V$ is a semisimple local system on $X$, then $V$ is of geometric origin if and only if $p^{\ast}(V)$ is of geometric origin.

Suppose that $\rho_1$ and $\rho _2$ are two irreducible representations with determinants of finite order, whose projectivizations are isomorphic. Then $\rho_1$ is of geometric origin if and only if $\rho _2$ is.
\end{Proposition}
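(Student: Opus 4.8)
The plan is to prove the five assertions in the order in which they are stated, since each uses the earlier ones. I would repeatedly use that $R^if_\ast\CC_Z$ underlies a polarizable $\CC$-VHS and hence is a semisimple local system, so that ``subquotient of $R^if_\ast\CC_Z$'' coincides with ``direct summand of $R^if_\ast\CC_Z$''; in particular any subquotient of a local system of geometric origin is again of geometric origin. I would also use, directly from the definition, that being of geometric origin depends only on the restriction of the local system to a dense Zariski open subset, which lets me shrink $X$ (and, below, $Y$) freely. For the direct sum: if $V_j|_{U_j}$ is a summand of $R^{i_j}f_{j\ast}\CC_{Z_j}$ with $f_j$ smooth projective, then replacing $Z_j$ by a product $Z_j\times C^{k_j}$ with $C$ a fixed smooth projective curve (the trivial summand $\CC\subseteq H^1(C)^{\otimes k_j}$ keeps $V_j$ a summand and shifts the cohomological degree) I may assume $i_1=i_2=:i$; then $Z_1\sqcup Z_2\to U:=U_1\cap U_2$ is smooth projective and its $i$-th higher direct image $R^if_{1\ast}\CC\oplus R^if_{2\ast}\CC$ contains $V_1\oplus V_2$, while conversely $V_1,V_2$ are summands of $V_1\oplus V_2$. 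For the tensor product I would take the smooth projective fibre product $Z_1\times_UZ_2\to U:=U_1\cap U_2$ and use the relative K\"unneth formula: the summand $R^{i_1}f_{1\ast}\CC\otimes R^{i_2}f_{2\ast}\CC$ of $R^{i_1+i_2}(Z_1\times_UZ_2)_\ast\CC$ contains $V_1\otimes V_2$.

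For the rank one statement: if $L$ has finite order $n$, its monodromy character $\chi$ has kernel of index $n$, hence defines a connected Galois \'etale cover $p\colon Y\to X$ with group $\ZZ/n$; being finite, $p$ is smooth and projective, $R^{>0}p_\ast\CC=0$, and $p_\ast\CC_Y\cong\bigoplus_{j=0}^{n-1}L^{\otimes j}$ has $L$ as a summand, so $L$ is of geometric origin (with $U=X$, $i=0$). Conversely, if $L$ is of geometric origin, then so is each $\mathrm{Aut}(\CC)$-conjugate $L^\tau$, because $R^if_\ast\CC_Z=R^if_\ast\QQ_Z\otimes_\QQ\CC$ is defined over $\QQ$; each $L^\tau$ is of rank one and underlies a polarized VHS, hence is unitary, so all Galois conjugates of every monodromy value $\chi(\gamma)$ have absolute value $1$, while the $\ZZ$-structure on $R^if_\ast\ZZ_Z$ forces $\chi(\gamma)$ to be an algebraic integer; by Kronecker's theorem $\chi(\gamma)$ is then a root of unity, and since $\pi_1(X)$ is finitely generated, $\chi$ has finite order.

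For the pullback along $p\colon Y\to X$: the forward implication is immediate, by base-changing the witnessing smooth projective family along $p$ (base change commutes with $R^if_\ast$ here) and restricting to a dense open of $Y$. For the converse, suppose $p^\ast V$ is of geometric origin, witnessed by $g\colon Z\to U'$ with $U'\subseteq Y$ dense open; after shrinking to smooth loci, if $\dim Y=\dim X$ I would shrink $X$ so that $p$ is finite \'etale, and if $\dim Y>\dim X$ I would cut $Y$ by $\dim Y-\dim X$ general hyperplane sections to obtain (Bertini) a smooth irreducible $W_0\subseteq Y$ of dimension $\dim X$ with $W_0\cap U'$ dense in $W_0$ and with $q:=p|_{W_0}\colon W_0\to X$ dominant, hence generically finite; restricting $g$ to $W_0\cap U'$ shows $q^\ast V=(p^\ast V)|_{W_0}$ is of geometric origin, and shrinking $X$ further reduces me to the case $p$ finite \'etale. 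In that case $V$ is a direct summand of $p_\ast p^\ast V$ (the trace map splits $\CC_X\hookrightarrow p_\ast\CC_Y$, and one applies the projection formula); choosing a dense open $U\subseteq X$ with $p^{-1}(U)\subseteq U'$, the composite $Z\times_{U'}p^{-1}(U)\to p^{-1}(U)\xrightarrow{p}U$ is smooth projective, its $i$-th higher direct image is $p_\ast$ of that of the family over $p^{-1}(U)$ (as $p$ is finite), and pushing the summand $p^\ast V|_{p^{-1}(U)}$ forward along $p$ exhibits $V|_U$ as a summand of it, so $V$ is of geometric origin. For the last statement, $\bar\rho_1\cong\bar\rho_2$ gives, after conjugating $\rho_2$, an identity $\rho_2=\rho_1\otimes\chi$ for a character $\chi$ of $\pi_1(X)$; since $\det\rho_2=\chi^n\det\rho_1$ and both determinants have finite order, $\chi$ has finite order, so by the rank one case $\chi$ and $\chi^{-1}$ are of geometric origin, and the tensor-product statement shows that $\rho_1$ is of geometric origin if and only if $\rho_2=\rho_1\otimes\chi$ is.

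I expect the main obstacle to be the converse half of the pullback statement: producing a generically finite multisection $W_0\to X$ in general position relative to the open set over which the witnessing family is defined, reducing cleanly to the finite \'etale case, and then carrying out the pushforward of that family so that the result is again literally of the form $R^i(\text{smooth projective})_\ast\CC$. A secondary delicate point is the implication ``geometric origin $\Rightarrow$ finite order'' in rank one: one must use the rational and integral structures to control \emph{all} archimedean absolute values of the monodromy eigenvalues, not just one, before invoking Kronecker's theorem together with finite generation of $\pi_1(X)$.
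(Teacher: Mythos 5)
Your proposal is correct and follows essentially the same route as the paper: disjoint unions (after equalizing degrees by multiplying with fixed varieties) for sums, fiber products and K\"unneth for tensor products, a finite cover for the rank-one finite-order direction and integrality plus unitarity of all $\mathrm{Gal}(\CC/\QQ)$-conjugates plus Kronecker for the converse, reduction to a finite \'etale multisection followed by pushforward (using semisimplicity of $R^if_*\CC$ to turn subquotients into summands) for the pullback statement, and twisting by a finite-order character for the last part. The only cosmetic difference is that you produce the quasi-finite multisection by explicit Bertini hyperplane cuts where the paper simply chooses a subvariety of $U$ quasi-finite over $X$; the substance is identical.
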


\begin{proof}
Given families for $V_1$ and $V_2$ we can take a disjoint union of products
with fixed varieties for $V_1\oplus V_2$. One may also stay within the world of fibrations
with connected fibers by then embedding the resulting family in a
big projective space and blowing up along that locus.
If $V_1\oplus V_2$ is of geometric origin then $V_1$ and $V_2$ are so by definition. If $V_1$ and $V_2$ are of geometric origin, taking the fiber product of the families results, by the K\"unneth formula, in a family for $V_1\otimes V_2$.

A rank $1$ system of finite order is a direct factor in the
$0$-th direct image of a finite ramified cover.
In the other direction, let $V$ be a rank $1$ local system of geometric origin.
Let $f:Z\to U\subset X$ be the corresponding map. Note that $R^if_*\CC_Z$ comes with an integral structure given by the image of $R^if_*\ZZ_Z\to R^if_*\CC_Z$. Hence the eigenvalues of all monodromy transformations are algebraic integers, so
any direct summand
of $R^if_*\CC_Z$ is integral. The action of $\mathop{\rm Gal} \, (\CC/\QQ)$ takes one direct summand to another. Now existence of a polarization implies that any rank $1$ direct summand is unitary. Thus, $V$ and all its
$\mathop{\rm Gal} \, (\CC/\QQ)$-conjugates are integral and unitary. But by Kronecker's theorem, an algebraic integer
all of whose conjugates have absolute value $1$ is a root of unity, so $V$ is of finite order (not only on $U$ but also on $X$).

Suppose $p:Y\rightarrow X$ is a generically surjective map and $V$ a local system on $X$. If $V$ is of geometric origin on $X$ then
since the image of $p$ intersects the open set $U\subset X$ on which
$V$ is defined, we get a family over $p^{-1}(U)$ showing that $p^{\ast}(V)$ is of geometric origin.

In the other direction, suppose $p^{\ast}(V)$ is of geometric origin coming from a family $f:Z\rightarrow U$ with $U\subset Y$.
Choose a subvariety $Y'\subset U$ which is quasi-finite over $X$.
By replacing $Y'$ by a Zariski open subset, we may assume that $p':Y'\rightarrow U'$ is a finite \'etale map to a dense open subset {$U'\subset X$}. Let $Z'\subset Z$ be the inverse image of $Y'$,
and denote by $f':Z'\rightarrow Y'$ the map. {Under our
suppositions, it is smooth and projective, hence also} the composed map
$$
Z' \stackrel{f'}{\rightarrow} Y'\stackrel{p'}{\rightarrow} U'
$$
is smooth and projective. By the
decomposition theorem, $R^i (f')_{\ast}(\CC _{Z'})$ is semisimple
so a subquotient may be viewed as a direct factor. Thus,
we may assume given an injective map
$(p')^{\ast}(V)\rightarrow
R^i (f')_{\ast}(\CC _{Z'})$ which gives by composition, in turn, an injective map
$$
V|_{U'} \rightarrow (p')_{\ast}(p')^{\ast}(V) \rightarrow
R^i (p'f')_{\ast}(\CC _{Z'}),
$$
showing that $V$ is of geometric origin.

For the last part, suppose given two irreducible representations $\rho_1$ and $\rho _2$ whose determinants are of finite order. Let
$V_1$ and $V_2$ respectively denote the corresponding local systems.  Suppose the projectivizations of $\rho_1$ and $\rho _2$ are isomorphic, and fix bases for $(V_1)_x$ and $(V_2)_x$ at the basepoint $x\in X$
that are compatible with the isomorphism of projectivizations.
For any $\gamma \in \pi _1(X,x)$ we may then write
$$
\rho _2(\gamma ) = a(\gamma ) \rho _1(\gamma ),
$$
with $a(\gamma )$ a diagonal matrix. Now $a:\pi _1(X,x)\rightarrow \CC ^{\ast}$ is a rank $1$ representation corresponding to a rank $1$ local system $A$, and we have
$V_2=A\otimes V_1$. If $r={\rm rk}(V_1)$ then
$$
{\rm det}(V_2) = A^{\otimes r}\otimes {\rm det}(V_1).
$$
From the hypothesis, $A^{\otimes r}$ is of finite order, so $A$ is of finite order. Hence, $A$ is of geometric origin. It follows that $V_2$ is of geometric origin if and only if $V_1$ is.
\end{proof}

Concerning the second paragraph, it is also true that any pullback of a system of geometric origin, is of geometric origin. For the case of pullback along a map that goes into the
complement of the open set $U$ over which the smooth family is given, this requires using the theory of perverse sheaves and applying the decomposition theorem; we do not treat that here.

The converse to the part about $V_1\otimes V_2$ looks like an interesting
question: if $V_1$ and $V_2$ are irreducible local systems with trivial determinant such that $V_1\otimes V_2$ is of geometric origin then are $V_1$ and $V_2$ of geometric origin?

\medskip

Our definition of local systems of geometric origin is rather large,
and one can envision a couple of strengthenings.
The following remarks on this point are not needed in the rest of the paper:
we refer everywhere only to the notion of
Definition \ref{geometricorigin}, and leave open the question of
obtaining the stronger properties we now mention.

First, one could ask for $U=X$, that is to say one could ask for a smooth family extending over $X$. We might say in that case that $V$ is {\em smoothly of geometric origin}. This looks to be a very strong condition, and it is difficult to predict when it should be satisfied. There are natural examples of local systems coming from families of varieties, such that
the local system extends to a larger subset than the subset of definition of the smooth family. Indeed, any even-dimensional Lefschetz pencil has order two monodromy around the singularities, so after pulling back to a twofold cover of the base ramified at the singular points, the local system extends. It is highly unclear whether to expect the existence of a smooth globally defined family realising the same monodromy representation.

In particular, Katz's theorem that rigid local systems over the orbifold projective line are of geometric
origin,
does not give smooth geometric origin. The discussion of the notion of geometric origin in \cite{To2} should probably be modified accordingly.

Another point is the question of whether the
projector to our direct factor is
motivic. Say that a local system $V$ is {\em strongly of geometric origin} if there exists a proper morphism $f:Z\rightarrow X$ and an algebraic cycle representing a projector $\pi : Rf_{\ast}(\ZZ _Z)\rightarrow Rf_{\ast}(\ZZ _Z)$ such that $V$ is the image of $\pi$.

One would need at least the Hodge conjecture to go from our (weak) notion of geometric origin to this notion of strong geometric origin, but in fact it does not even seem clear that the Hodge conjecture would be sufficient for that.

For instance one would like to answer the following question
(a punctual analogue of the converse question for tensor products of
local systems mentioned above): given $\ZZ$-Hodge structures $V_1$ and $V_2$ with trivial determinants
such that $V_1\otimes V_2$ is  a motive, are $V_1$ and $V_2$ motives?

If we do not know the answer to that question, but do assume the Hodge conjecture, then one might be able to show that $V$ of weak geometric origin implies that some $V^{\oplus k}$ is of strong geometric origin.
Some understanding of algebraic cycles and the decomposition theorem would also clearly be needed.

In the context of the present paper, our main constructions provide smooth families of abelian varieties, hence give smooth geometric origin in some cases. One can envision looking at the question of strong geometric origin for those cases---that would require delving into the endomorphism algebras of the families of abelian varieties we construct. We do not make any claims about that here.

\section{Factorization through orbicurves}
\label{sec-fact}

An {\em orbicurve} $C$ is a smooth $1$-dimensional Deligne-Mumford
stack whose generic stabilizer group is trivial.  The coarse moduli
space $C^{\rm coarse}$ is a smooth curve having a collection of points
$p_1,\ldots , p_k$ such that there exist strictly positive integers
$n_1,\ldots , n_k$ such that
$$
C \cong  C^{\rm coarse}\left[ \frac{p_1}{n_1} ,\ldots ,  \frac{p_k}{n_k} \right]
$$
is the root stack. The fundamental group $\pi _1(C,y)$ is the same as
the orbifold fundamental group considered in Subsection
\ref{homotopy-subsection} (with $n_i$ equal to multiplicities of the
fibres over $p_i$).

The goal of this subsection is to prove the equivalence of several different notions of
factorization through orbicurves.

\begin{Definition}
\label{factordef}
Let $X$ be a smooth complex quasi-projective variety.
Suppose that $\rho:\pi_1(X, x)\to \GL (n, \CC)$ is an irreducible representation.
\begin{enumerate}
\item We say that $\rho$ {\em projectively factors through an orbicurve} if there is an orbicurve $C$,
a fibration $f:X\rightarrow C$, and a commutative diagram
$$
\begin{array}{ccc}
\pi _1(X,x) & \rightarrow & \GL (n,\CC ) \\
\downarrow & & \downarrow \\
\pi _1(C,f(x)) & \rightarrow & \PGL (n,\CC ).
\end{array}
$$

\item We say that $\rho$ {\em virtually projectively factors through
    an orbicurve} if there is an alteration $p:Z\rightarrow X$ such
  that $p^{\ast}\rho$ projectively factors through an orbicurve.
\end{enumerate}
\end{Definition}

\medskip

\begin{Definition}
  Let $X$ be a smooth complex quasi-projective variety.  We say that an
  irreducible representation $\rho :\pi_1(X, x)\to \GL (n, \CC)$ is
  {\em tensor decomposable} if the associated local system $V_{\rho}$
  can be written as a tensor product $V_1\otimes V_2$ of two local
  systems of rank $\geq 2$.  Say that $\rho$ is {\em virtually tensor
    decomposable} if there exists an alteration $p:Z\rightarrow X$
  such that $p^{\ast} \rho$ is tensor decomposable.

Say that $\rho$ is {\em virtually reducible} if there is an alteration
$p:Z\rightarrow X$ such that $p^{\ast} \rho$ is reducible, i.e. it
decomposes into a nontrivial direct sum.
\end{Definition}

  Let us note that a representation $\rho :\pi_1(X, x)\to \SL (n,
  \CC)$ is virtually reducible if and only if it is not Lie
  irreducible.  Indeed, if $p:Z\rightarrow X$ is an alteration then the image of
  $\pi_1(Z,z)\to \pi_1(X,x)$ has finite index in
  $\pi_1(X,x)$. So if $\rho$ is virtually reducible then it is not Lie
  irreducible. The implication in the other direction follows from the
  fact that a finite index subgroup in $\pi_1(X,x)$ gives rise to a
  finite \'etale covering.

\medskip

For the proof of the following factorization theorem we refer to Appendix (see Theorem \ref{factorequiv2}):

\begin{Theorem}
\label{factorequiv}
Let $X$ be a smooth complex projective variety. Let us fix an
irreducible representation $\rho:\pi_1(X, x)\to \SL (n, \CC)$ for some
$n\ge 2$. Suppose that $\rho$ is not virtually tensor decomposable,
and not virtually reducible. Then the following conditions are
equivalent.
\begin{enumerate}

\item $\rho$ projectively factors through an orbicurve;

\item $\rho$ virtually projectively factors through an orbicurve;

\item There exists a map $f: X\rightarrow C$ to an orbicurve and a
  fiber $F=f^{-1}(y)$, such that the restriction of $\rho$ to $\pi
  _1(F,x)$ becomes reducible;

\item There exists an alteration $p:Z\rightarrow X$ such that the
  previous condition holds for the pullback $p^{\ast}\rho$.

\end{enumerate}
\end{Theorem}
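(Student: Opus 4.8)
The plan is to prove the four implications in a cycle, with the essential content concentrated in the step that manufactures a map to an orbicurve out of a purely group-theoretic factorization of the monodromy. The trivial direction is $(1)\Rightarrow(2)$, taking $p=\mathrm{id}$, and $(3)\Rightarrow(4)$ is equally trivial. The real work is $(2)\Rightarrow(4)$, $(4)\Rightarrow(3)$, and $(3)\Rightarrow(1)$. Throughout I would use heavily the two standing hypotheses: $\rho$ is not virtually reducible (equivalently, by the remark just above, it is Lie irreducible — so its restriction to any finite-index subgroup, hence to the image of $\pi_1$ of any alteration, stays irreducible) and $\rho$ is not virtually tensor decomposable. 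These are exactly what one needs to rule out the degenerate ways a projective factorization can fail to descend or ascend.

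For $(3)\Rightarrow(1)$: suppose $f:X\to C$ and a fiber $F$ with $\rho|_{\pi_1(F,x)}$ reducible. First reduce to the case $f$ is a fibration (Stein factorization) and pass to the orbicurve structure on the base recording multiple fibers, so that Theorem \ref{Xiao} gives the exact sequence $\pi_1(F,x)\to\pi_1(X,x)\to\pi_1^{\orb}(C_f,f(x))\to1$. Let $N$ be the image of $\pi_1(F,x)$, a normal subgroup with orbicurve quotient. Now I would invoke the standard dichotomy (this is essentially Corlette–Simpson's analysis, or the Zuo/Katzarkov circle of ideas): an irreducible $\rho$ whose restriction to a normal subgroup $N$ is reducible either has $\rho|_N$ with finite image — in which case $\rho$ itself, being Lie irreducible, cannot have finite image on a finite-index subgroup, and one plays off the finitely many isotypic components of $\rho|_N$ permuted by the quotient, producing either a tensor decomposition (excluded) or a genuine projective factorization through $\pi_1^{\orb}(C_f)=\pi_1(C)$ — or $\rho|_N$ has infinite image, and then the isotypic decomposition of $V_\rho|_N$ is by a single isotype $W^{\oplus m}$ (irreducibility of $\rho$ forces transitivity, non-tensor-decomposability forces $m$ or $\dim W$ trivial), so $V_\rho=W\otimes(\text{rep of the quotient})$ projectively, again giving the factorization through $C$. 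This is the step I expect to be the main obstacle: bookkeeping the Clifford-theory cases and checking that each non-factorizing branch contradicts one of the two hypotheses.

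For $(2)\Rightarrow(4)$ and $(4)\Rightarrow(3)$: given an alteration $p:Z\to X$ with $p^*\rho$ projectively factoring through $g:Z\to C'$, I would take the Stein factorization of $g$ and look at a general fiber $F'$ of the resulting fibration; then $p^*\rho|_{\pi_1(F',z)}$ lands in the scalars, so is certainly reducible, which is condition $(3)$ for $p^*\rho$, i.e. condition $(4)$. That gives $(2)\Rightarrow(4)$. For $(4)\Rightarrow(3)$ — descending the factorization-witnessing fiber from $Z$ back to $X$ — I would argue that the fibration $Z\to C'$ must essentially be pulled back from a fibration $X\to C$: a general fiber of $p$ maps to a point of $C'$ (otherwise its fundamental group would surject onto $\pi_1$ of an open curve and one could, using Lie irreducibility, deduce either reducibility or, via the tensor structure, a contradiction), so $C'$ factors through the function field of $X$, giving $X\dashrightarrow C$, which one resolves and Stein-factorizes to an honest fibration $f:X\to C$; a general fiber $F$ of $f$ has $p^{-1}(F)\to F$ surjective with $p^*\rho$ trivial-mod-scalars on the fiber component, and Lie irreducibility of $\rho|_{\pi_1(F)}$ together with the finite-index image of $\pi_1(p^{-1}(F))$ forces $\rho|_{\pi_1(F)}$ itself to be reducible. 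Assembling: $(1)\Rightarrow(2)\Rightarrow(4)\Rightarrow(3)\Rightarrow(1)$ closes the cycle. The delicate points throughout are the orbifold structure on the base (keeping track of multiple fibers so that Theorem \ref{Xiao} applies and so that "projectively factors through $\pi_1(C)$" really means through the orbifold group), and the repeated use of the two hypotheses to kill the branches where a would-be factorization degenerates into a tensor decomposition or a drop in irreducibility.
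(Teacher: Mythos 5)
Your cycle $(1)\Rightarrow(2)\Rightarrow(4)\Rightarrow(3)\Rightarrow(1)$ is a legitimate reorganization, and your $(3)\Rightarrow(1)$ is the paper's route (Stein factorization, the homotopy exact sequence, Clifford theory on the normal image of $\pi_1(F,x)$, with the two standing hypotheses killing the reducible and tensor-decomposable branches; this is Proposition \ref{redonfiber}). But the proposal has a genuine gap at the descent step $(4)\Rightarrow(3)$. You assert that a general fiber of $p$ maps to a point of $C'$, justified by its fundamental group surjecting onto $\pi_1$ of an open curve. Since $p:Z\rightarrow X$ is an alteration, hence generically finite, a general fiber of $p$ is a finite set of points, so that justification is vacuous; worse, the conclusion is simply not available: there is no reason for the fibration $g:Z\rightarrow C'$ to be constant on fibers of $p$, so no rational map $X\dashrightarrow C'$ need exist. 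The correct statement, and the paper's actual argument (Lemma \ref{virtual-implies-usual}), is that the images $p(Z_y)$ of general fibers of $g$ are \emph{pairwise disjoint} divisors in $X$ --- proved by contradiction using that the image of $\rho$ is infinite while the monodromy of $V_{\rho}$ on each $p(Z_y)$ is finite --- and then Totaro's theorem on disjoint algebraically equivalent divisors produces a possibly different fibration $h:X\rightarrow B$ whose general fibers are the $p(Z_y)$, on which $V_{\rho}$ has finite monodromy. This geometric input is indispensable and entirely absent from your sketch; without it the cycle does not close. (A further small point: once the restriction of $\rho$ to a general fiber of $h$ has finite monodromy, one still needs the lemma that the normalizer in $\SL(n,\CC)$ of a finite \emph{irreducible} subgroup is finite, to rule out that this restriction is irreducible; your appeal to ``Lie irreducibility of $\rho|_{\pi_1(F)}$'' to force reducibility is backwards.)

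The step you correctly flag as the main obstacle in $(3)\Rightarrow(1)$ is also left unresolved: in the single-isotype case $V_{\rho}|_F\cong V_1\otimes W_1$ one cannot directly write $V_{\rho}$ projectively as $V_1\otimes(\hbox{representation of the quotient})$, because $V_1$ is only a representation of the image of $\pi_1(F,x)$ and must first be extended (after a finite \'etale cover of $X$ induced from a cover of $C$) to a genuine local system $V_1'$ on $Z$. The paper achieves this by lifting the outer action to a projective representation of $\pi_1(X,x)$, identifying the obstruction to a linear lift as a class in $H^2(\pi_1(X,x);\mu_l)$, and killing it via the Lyndon--Hochschild--Serre spectral sequence by passing to a finite cover of the curve; only then does $V_1'\otimes\Hom(V_1',V_{\rho}|_Z)\cong V_{\rho}|_Z$ yield the trichotomy (factorization, tensor decomposition, or reducibility). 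Your sketch identifies the right dichotomy of cases but supplies no mechanism for this extension, which is where the actual work of the theorem lies.
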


\medskip

\begin{Lemma} \label{virtually-reducible-is-not-(1,1,1)}
  Suppose $\rho : \pi_1(X, x)\to \SL (n, \CC)$ is an irreducible representation of rank $n\leq
  3$. Then $\rho$ is not virtually tensor decomposable.  If $\rho$ is
  virtually reducible then it has image either in a finite subgroup,
  or in the normalizer of a maximal torus. If $\rho$ is virtually
  reducible and underlies a VHS then it is unitary with a single Hodge
  type.
\end{Lemma}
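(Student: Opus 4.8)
The plan is to analyze the three assertions separately, using the low rank hypothesis $n \le 3$ at each stage. For the first assertion, suppose for contradiction that $p^*\rho$ is tensor decomposable for some alteration $p:Z \to X$, so that the associated local system of $p^*\rho$ is $V_1 \otimes V_2$ with $\rk V_i \ge 2$. Then $\rk(V_1 \otimes V_2) = \rk V_1 \cdot \rk V_2 \ge 4 > 3 \ge \rk(p^*\rho) = \rk \rho$, which is immediately absurd. So no alteration can make $\rho$ tensor decomposable, regardless of irreducibility.

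**Next I would treat the virtual reducibility claim.** Suppose $\rho$ is virtually reducible. By the discussion just before the lemma, $\rho$ is then not Lie irreducible, so (after passing to a finite \'etale cover, i.e. replacing $\pi_1(X,x)$ by a finite index subgroup) the Zariski closure $G_\rho$ of the image has the property that the connected component $G_\rho^0 \hookrightarrow \SL(n,\CC)$ is \emph{not} irreducible. I would invoke Proposition \ref{weak-general}: since $\rho$ is irreducible but not Lie irreducible, cases (2) and (3) of that proposition would force an almost-simple irreducible connected closure (case 2) or a product decomposition forcing rank $\ge 4$ (case 3, since the $H_i$ must act irreducibly on tensor factors of dimension $\ge 2$, giving total rank $\ge 4$), both of which contradict virtual reducibility in rank $\le 3$. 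Hence we are in case (1): there is a finite \'etale cover on which $p^*\rho$ becomes reducible. Now the standard structure theory of connected subgroups of $\SL(2,\CC)$ and $\SL(3,\CC)$ acting reducibly applies: if $G_\rho^0$ is a torus then the image lies in the normalizer of a maximal torus of $\SL(n,\CC)$ (the $\pi_0$ part permutes the eigenlines); if $G_\rho^0$ is trivial then the image of $\rho$ is finite. The only remaining possibility to exclude is that $G_\rho^0$ is a positive-dimensional nonabelian reductive group acting on $\CC^n$ ($n\le 3$) with an invariant subspace; for $n=2$ this cannot happen (the only positive-dimensional connected reductive subgroup of $\SL(2)$ acting reducibly is a torus), and for $n=3$ a nonabelian connected reductive group with a $1$- or $2$-dimensional invariant subspace would, by complete reducibility on the invariant and quotient pieces, be simultaneously block-diagonalizable, forcing it into a Levi $\GL(1)\times\GL(2)$ or a torus, and the $\SL(2)$-in-a-$2\times 2$-block case is incompatible with the centralizer of $\rho$ being finite for an irreducible $\rho$. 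So the image is finite or lies in the normalizer of a maximal torus, as claimed.

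**For the last assertion,** assume in addition that $\rho$ underlies a VHS. If the image is finite then $\rho$ is unitary; a finite-image VHS has zero Higgs field, hence by the type analysis in Section \ref{sec-notation} (the Higgs field must be nonzero between consecutive nonzero Hodge bundles) it has a single Hodge type, so we are done in that case. The remaining case is that the image lies in $N(T)$, the normalizer of a maximal torus $T \subset \SL(n,\CC)$, but is not finite; I would rule this out. The point is that a VHS structure on $V_\rho$ gives a grading $V_\rho = \bigoplus E^{p,q}$ of the underlying bundle and a Higgs field $\theta$ of degree $(-1,1)$; the monodromy preserving a decomposition into lines up to permutation means, after pulling back to the finite cover where the permutation is trivial, that $V_\rho$ splits as a direct sum of rank-one local systems, contradicting irreducibility of $\rho$ unless the splitting is not defined over $X$ — but then the monodromy action on the set of eigenlines is a nontrivial transitive permutation action, and the representation is induced from a rank-one representation of a proper finite-index subgroup; such an induced representation, when it underlies a VHS, forces each conjugate line bundle summand to carry the Hodge filtration compatibly, and a direct computation of the Higgs field shows $\theta = 0$ (the eigenlines are permuted, so there is no room for a nonzero degree $(-1,1)$ map respecting the grading), whence again a single Hodge type — but a unitary representation with infinite image in $N(T)$ is still unitary, so this case does survive as ``unitary with a single Hodge type.'' So in all cases $\rho$ is unitary with a single Hodge type.

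**The main obstacle** I anticipate is the middle assertion: cleanly extracting, from ``virtually reducible in rank $\le 3$,'' the dichotomy between finite image and image in $N(T)$ — in particular ruling out a connected nonabelian reductive Zariski closure that acts reducibly on $\CC^3$. This requires a careful case analysis of connected reductive subgroups of $\SL(3,\CC)$ together with the fact that an irreducible $\rho$ has finite centralizer in $\SL(n,\CC)$ (from Lie irreducibility failing, as recalled in the excerpt), which pins down $G_\rho^0$ to be a torus or trivial. The VHS part is then comparatively routine, relying only on the observation that a Higgs field of type $(-1,1)$ cannot be nonzero on a Hodge grading whose pieces are permuted by monodromy, combined with the type classification already recorded in the paper.
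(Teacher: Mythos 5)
Your first assertion is handled exactly as in the paper (tensor decomposability would force rank at least $4$). For the other two assertions you take a genuinely different route: you try to classify the connected component $G_\rho^0$ of the Zariski closure, whereas the paper passes to a Galois \'etale cover $p:Z\to X$ and analyzes the isotypic decomposition $V_\rho|_Z=\bigoplus V_i\otimes W_i$ under the Galois action. Your route contains a genuine gap at precisely the step you flag as the main obstacle: the exclusion of a nonabelian connected reductive $G_\rho^0$ (e.g.\ an $\SL(2,\CC)$-block) is justified by appeal to ``the centralizer of $\rho$ being finite,'' but this proves nothing --- the centralizer of any irreducible subgroup of $\SL(3,\CC)$ is the center, hence finite by Schur's lemma, and that is perfectly compatible with $G_\rho^0$ being an $\SL(2)$-block. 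The correct argument is different: $G_\rho$ normalizes $G_\rho^0$ and hence permutes the pieces of the canonical $G_\rho^0$-isotypic decomposition of $\CC^3$; for a nonabelian connected reductive subgroup acting reducibly these pieces have dimensions $2$ and $1$, so they cannot be permuted transitively and the $2$-dimensional piece is $\rho$-invariant, contradicting irreducibility. (The same observation is needed to finish your torus case, where a $1+2$ weight-space pattern must be ruled out.) This isotypic-decomposition idea, applied directly to $V_\rho|_Z$ rather than to the algebraic group, is exactly the paper's proof, and it dispatches all cases at once with no classification of subgroups of $\SL(3,\CC)$.

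In the VHS step there is a second gap: the parenthetical claim that ``the eigenlines are permuted, so there is no room for a nonzero degree $(-1,1)$ map'' is false as stated. If the three flat lines on the cover carried three adjacent Hodge types there would be plenty of room for such maps --- that is precisely a type $(1,1,1)$ system of Hodge bundles, the object the rest of the paper studies. What actually kills $\theta$ is that the decomposition into flat rank-one summands is the isotypic decomposition and is therefore compatible with the Hodge structure: each line is then a sub-VHS with a single Hodge type, the Galois group permutes them transitively while preserving Hodge bidegree, so all the types coincide and $\theta$, being the direct sum of the (zero) Higgs fields of the lines, vanishes. This compatibility statement, invoked explicitly in the paper's proof, must be stated and used; without it the argument does not close.
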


\begin{proof}
  Clearly $\rho$ cannot be virtually tensor decomposable because then
  it should have rank $\geq 4$. Suppose $\rho$ is virtually reducible,
  becoming reducible upon pullback to $p:Z\rightarrow X$. The image of
  $\pi _1(Z,z)$ is of finite index in $\pi _1(X,x)$ and the
  restriction of $\rho$ to this subgroup is reducible. Therefore we
  may replace $Z$ by the finite \'etale covering corresponding to this
  subgroup, in other words we may suppose that $p$ is finite
  \'etale. We may furthermore suppose it is Galois with group $G$.
  Write the decomposition into isotypical components
$$
V_{\rho}|_Z = \bigoplus V_i\otimes W_i,
$$
where $V_i$ are irreducible local systems on $Z$ and $W_i$ are vector
spaces. The Galois group acts on $\{ V_i\}$. This action is
transitive, otherwise $\rho$ would be reducible. In particular, all of
the $V_i$ and all of the $W_i$ have the same rank.

Suppose there is more than one isotypical component. Then $V_i$ and
$W_i$ must be of rank $1$ and $G$ acts on the set of isotypical
  components by permutation. Since the subgroup of $\SL(n, \CC)$
  fixing the decomposition of $V_{\rho}|_Z $ into isotypical
  components is a maximal torus, the image of $\rho$ is contained in
  the normalizer of a maximal torus.  Furthermore, assume that $\rho$
underlies a VHS. The Galois action preserves the Hodge type, and there
is a single Hodge type for the rank $1$ isotypical component (note
that the isotypical decomposition is compatible with the Hodge
structure). Therefore $\rho$ has only a single Hodge type and it is
unitary. This proves the lemma in the case of several isotypical
components.

We may now assume $V_{\rho}|_Z = V_1\otimes W_1$. Since the rank is
$\leq 3$, and by hypothesis $V_{\rho}|_Z$ is reducible so the rank of
$V_1$ is strictly smaller than $n$, we get that $V_1$ has rank $1$. It
means that the restriction and projection to a representation $\pi
_1(Z,z)\rightarrow \PGL (n,\CC )$ is trivial. Since by hypothesis our
representation is into $\SL (n,\CC )$ we get that $\rho$ has finite
image. In particular it is unitary, and a unitary irreducible VHS can
have only a single Hodge type.  This completes the proof of the lemma.
\end{proof}

\medskip

\begin{Corollary} \label{virtually-implies-usually} Let $X$ be a
  smooth complex projective variety, and suppose $\rho:\pi_1(X,
  x)\to \SL (3, \CC)$ is an irreducible representation of rank $3$
  underlying a VHS with more than a single Hodge type. Then the
  conditions listed in Theorem \ref{factorequiv} are equivalent.
\end{Corollary}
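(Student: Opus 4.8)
The plan is to deduce the corollary directly from Lemma \ref{virtually-reducible-is-not-(1,1,1)} and Theorem \ref{factorequiv}. First I would observe that Theorem \ref{factorequiv} applies to any irreducible $\rho:\pi_1(X,x)\to\SL(n,\CC)$ that is \emph{neither virtually tensor decomposable nor virtually reducible}, so the whole task is to check that these two hypotheses hold in our situation, namely $n=3$ and $\rho$ underlies a VHS with more than one Hodge type.

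For virtual tensor decomposability, Lemma \ref{virtually-reducible-is-not-(1,1,1)} already tells us unconditionally that an irreducible representation of rank $n\le 3$ is never virtually tensor decomposable: a tensor product of two local systems of rank $\ge 2$ has rank $\ge 4$, and this persists under pullback. So that hypothesis of Theorem \ref{factorequiv} is automatic.

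For virtual reducibility, I would argue by contradiction: suppose $\rho$ is virtually reducible. By the last sentence of Lemma \ref{virtually-reducible-is-not-(1,1,1)}, a virtually reducible irreducible $\rho$ of rank $\le 3$ that underlies a VHS must be unitary with a single Hodge type. This contradicts the hypothesis of the corollary that the VHS has more than a single Hodge type. Hence $\rho$ is not virtually reducible.

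Having verified both hypotheses, Theorem \ref{factorequiv} applies verbatim and gives the equivalence of conditions (1)--(4) listed there for our $\rho$. There is essentially no obstacle here; the only thing to be slightly careful about is that the corollary's hypothesis ``underlies a VHS with more than a single Hodge type'' is exactly the negation of the conclusion forced by Lemma \ref{virtually-reducible-is-not-(1,1,1)} in the virtually reducible case, so the two statements fit together cleanly. $\square$
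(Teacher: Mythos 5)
Your proposal is correct and follows exactly the paper's argument: the paper's proof is the one-line observation that Lemma \ref{virtually-reducible-is-not-(1,1,1)} rules out virtual tensor decomposability (automatic in rank $\le 3$) and virtual reducibility (which would force a single Hodge type, contradicting the hypothesis), so Theorem \ref{factorequiv} applies. You have simply written out the same deduction in more detail.
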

\begin{proof}
  By the above lemma, $\rho$ is not tensor decomposable and not virtually
  reducible.
\end{proof}

\section{Structure of complex VHS of type $(1,1,1)$}\label{structure=(1,1,1)}

In this section we study general complex VHS of type $(1,1,1)$ on
quasi-projective varieties.  The main aim is to prove that every
complex VHS of type $(1,1,1)$ or its dual can be constructed
analogously to the following example:

\begin{Example}\label{general-form}
Let $X$ be a smooth complex projective surface. Let $L_1$ be a nef
line bundle with strictly positive degree, $L_1^2=0$ and fixed
inclusion $j_1:L_1\hookrightarrow \Omega_X$.  Let us fix a line bundle
$j_2:L_2\hookrightarrow \Omega_X$
generically the same as $j_1(L_1)$
such that $c_1(L_2)=a\, c_1(L_1)$ in $H^2(X, \QQ)$ for some
(rational) $a\in (-\frac{1}{2}, 1]$.  Finally, let us fix a line bundle
  $L_0$ such that $3c_1(L_0)=(2+a)c_1(L_1)$ in $H^2(X,\QQ)$.

Let us consider the system of Hodge bundles defined by
$$E^{2,0}:=L_0, \quad E^{1,1}:=L_0\otimes L_1^{-1}, \quad
E^{0,2}:=L_0\otimes L_1^{-1}\otimes L_2^{-1}$$ with $\theta$ given by
inclusions $j_i:L_i\hookrightarrow \Omega_X$ tensored with identity on
$E^{p,q}$ for $(p,q)=(2,0), (1,1)$. Then for any ample line bundle $A$ the pair $(E, \theta)$ is an
$A$-stable system of Hodge bundles with vanishing rational Chern
classes.

Later we will see that if $a\ne 1$ then this system of Hodge bundles comes from an orbicurve
(see Theorem \ref{main_thm_on_type_(1,1,1)}).
\end{Example}

\medskip

Let $X$ be a smooth complex quasi-projective variety of dimension
$d\ge 2$.  Let us fix a smooth projective variety $\bar X$ containing
$X$ as an open subset and such that $D=\bar X-X$ is a simple normal
crossing divisor.
In this section, we consider the quasi-projective case for future reference, although afterwards in the present paper we shall assume
$D=\emptyset$.

Let $A$ be a fixed very ample divisor on $\bar
X$. In the following we use the intersection pairing of $\QQ$-divisors
on the polarized variety $(\bar X, A)$ as defined in Subsection
\ref{intersection-pairing}.

\begin{Remark}
Technically speaking, one should make a distinction
between a line bundle and its corresponding divisor class.
In the subsequent numerical calculations that would
add burdensome extra notation, so we make the convention
that a symbol such as, typically, $L_i$ can either mean
the line bundle or the corresponding divisor class, according
to context.
In particular, we usually use the tensor product $L_1\otimes L_2$ but in computation of
intersection numbers we use divisor classes and write, e.g.,  $(L_1+2L_2).A$ instead of $(L_1\otimes L_2^{\otimes 2}).A$.

\end{Remark}

Let us recall that by Bertini's theorem the general complete
intersection surface $\bar Y$ in $\bar X$ is smooth and
irreducible. We can also assume that $D_Y=\bar Y\cap D$ is a simple
normal crossing divisor on $\bar Y$. Let us set $Y:=\bar Y-D_Y$.

Let $(E=\bigoplus_{p+q=2}E^{p,q}, \theta)$ be a rank $3$
logarithmic system of Hodge bundles of type $(1,1,1)$. Recall
that this definition includes the condition that both
Kodaira-Spencer maps $\theta$ be nonzero, so they induce
injections of rank $1$ sheaves
$$L_1:=E^{2,0}\otimes (E^{1,1})^*\hookrightarrow
\Omega _{\bar X}(\log D)$$ and
$$L_2:=E^{1,1}\otimes (E^{0,2})^*\hookrightarrow
\Omega _{\bar X} (\log D).$$

The proof of the following proposition is inspired by the proof of
\cite[Theorem 4.2]{La1}.

\begin{Proposition} \label{rank-3-structure} Assume that $E$ has
  vanishing rational Chern classes and that $(E, \theta)$ is slope
  $A$-stable.  Then the following conditions are satisfied:
    \begin{enumerate}
    \item  $c_1(E^{p,q}).c_1(E^{p',q'})=0$ for all pairs $(p,q)$ and
  $(p',q')$.
 \item The classes of $L_1$ and $L_2$ lie on the same line in the vector space
    $H^2(\bar X, \QQ)$.
  \item There exist effective divisors $B_1$ and $B_2$ such that
    $M:=L_1(B_1)=L_2(B_2)$ is a saturation of both $L_1$ and $L_2$ in
    $\Omega _{\bar X}(\log D)$. Moreover,  we have $L_i.M=0$ and $L_i.B_j=0$ for all $i$ and $j$.
    \item Either $L_1|_{\bar Y}$ or $L_2|_{\bar Y}$ is nef with self intersection zero and strictly positive degree.
    \end{enumerate}
\end{Proposition}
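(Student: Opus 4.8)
The plan is to exploit the interplay between stability, vanishing Chern classes, and Bogomolov's lemma on the complete intersection surface $\bar Y$. Since slope stability and Chern class computations on $\bar X$ can be reduced to the surface $\bar Y$ via Mehta--Ramanathan (the restriction of a stable Higgs bundle to a general high-degree complete intersection surface stays stable, and Chern classes restrict compatibly), I would first reduce everything to the case $\dim \bar X = 2$, working with $L_1|_{\bar Y}$, $L_2|_{\bar Y}$ inside $\Omega_{\bar Y}(\log D_Y)$; the $\theta\wedge\theta=0$ condition forces the two sub-line-bundles to have the same saturation $M\subset \Omega_{\bar Y}(\log D_Y)$, giving effective $B_i$ with $M=L_i(B_i)$. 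This yields part (3)'s first sentence. The key external input is Bogomolov's lemma: $M$, being a sub-line-bundle of $\Omega_{\bar Y}(\log D_Y)$, is \emph{not big}.

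Next I would extract numerical consequences of stability. Vanishing rational Chern classes give $c_1(E)=0$ and the Bogomolov-type equality $\mathrm{ch}_2(\mathrm{End}\,E)=0$; writing $c_1(E^{2,0})=c_1(L_0)$ etc. in terms of $L_0, L_1, L_2$, the condition $c_1(E)=0$ forces $3c_1(L_0)=2c_1(L_1)+c_1(L_2)$ (in $\mathrm{NS}_\QQ$), and the $\mathrm{ch}_2$ relation becomes a quadratic identity among $L_1^2$, $L_2^2$, $L_1.L_2$. Stability applied to the sub-Higgs-sheaf $E^{2,0}\oplus E^{1,1}$ (note $\theta$ maps $E^{2,0}$ into $E^{1,1}\otimes\Omega$, so this is $\theta$-invariant) gives $\deg(E^{2,0})+\deg(E^{1,1}) < \tfrac{2}{3}\deg E = 0$, i.e. $L_1.A > -\,(\text{something})$; similarly the sub-bundle $E^{2,0}$ alone gives another slope inequality. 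Combining these with $M=L_i+B_i$ not big (so by Lemma \ref{nefness}, applied with $L=M$ once one checks $M.A>0$ — which follows since $L_i.A>0$ from the stability inequalities — and $M^2\ge 0$, we would get $M$ nef with $M^2=0$; but more directly one plays the inequalities $M^2 \le 0$-type bounds coming from non-bigness against the stability-forced positivity) pins down $L_1^2 = L_2^2 = L_1.L_2 = 0$ and $M^2=0$. From $L_i^2 = L_i.L_2 = 0$ for the relevant indices together with $L_i.A$ controlled, the Hodge index theorem on $\bar Y$ forces $L_1$ and $L_2$ to be proportional in $H^2(\bar Y,\QQ)$, hence (by Lefschetz, as in Subsection \ref{intersection-pairing}) in $H^2(\bar X,\QQ)$ — this is part (2) — and part (1) follows since all the $c_1(E^{p,q})$ are integer combinations of $c_1(L_1), c_1(L_2)$ with all pairwise products zero.

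For part (4): at least one of $L_1, L_2$ must have $L_i.A > 0$. Indeed if both had $L_i.A \le 0$ then, running the slope inequality from the sub-Higgs-bundle $E^{2,0}$ (which gives $\deg E^{2,0} < 0$, i.e. $(2+a)/3 \cdot (L_1.A) < 0$ under proportionality) against the one from $E^{2,0}\oplus E^{1,1}$, one reaches a contradiction with $M = L_i + B_i$, $B_i$ effective, $M$ a sub-sheaf of $\Omega$; more concretely, $M.A = L_i.A + B_i.A$, and if $L_i.A\le 0$ for both $i$ while $B_i$ effective, one shows $M$ would be forced into a contradiction with the stability slope bounds. So WLOG $L_1.A > 0$; then $M=L_1+B_1$ has $M.A>0$ and $M^2=0 \ge 0$ and $M$ not big, so Lemma \ref{nefness} gives $M$ nef of strictly positive degree with $M^2=0$. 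By Lemma \ref{Zariski-semi-decomposition} applied to $M = L_1 + B_1$ we get $L_1$ nef, $L_1^2=0$, $L_1.B_1 = 0$, hence $L_1.M = 0$ and $L_1.B_j = 0$ (using proportionality of $L_1, L_2$ for $j=2$), completing the remaining claims of part (3); and $L_1.A = M.A - B_1.A$; since $L_1$ is nef with $L_1^2=0$ and $L_1\not\equiv 0$ (else $E$ would destabilize — or use $L_1.A>0$), the Hodge index argument shows $L_1.A>0$, i.e. $L_1|_{\bar Y}$ has strictly positive degree. That gives part (4).

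The main obstacle I expect is the bookkeeping in the middle step: squeezing $L_1^2=L_2^2=L_1.L_2=0$ out of the combination of (a) the quadratic $\mathrm{ch}_2$-vanishing identity, (b) two or three slope inequalities from the $\theta$-invariant subsheaves, and (c) non-bigness of $M=L_i+B_i$ with $B_i$ effective — all while the sign of $a$ (equivalently of $L_1.L_2$ relative to $L_1^2$) is a priori unknown and can genuinely be negative for VHS over curves. Getting the inequalities to close up tightly enough to force the self-intersections to vanish, rather than merely be bounded, and handling the degenerate sub-cases (e.g. one $L_i$ numerically trivial, or $B_i = 0$) cleanly, is where the real work lies; everything else is Hodge index theorem plus the two Zariski-decomposition lemmas already established.
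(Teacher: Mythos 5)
Your outline has the right overall shape --- the same external inputs as the paper (Bogomolov's lemma giving non-bigness of $M\subset \Omega_{\bar X}(\log D)$, slope stability, $c_2(E)=0$, the Hodge index theorem, and Lemmas \ref{nefness} and \ref{Zariski-semi-decomposition}) in essentially the same roles. But there are two genuine problems.

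First, you have the $\theta$-invariant subsheaves backwards. Since $\theta$ maps $E^{p,q}$ to $E^{p-1,q+1}\otimes\Omega$, the sub-Higgs-sheaves of $E$ are $E^{0,2}$ and $E^{1,1}\oplus E^{0,2}$, not $E^{2,0}$ and $E^{2,0}\oplus E^{1,1}$ (indeed $\theta$ sends $E^{1,1}$ out of the latter, into $E^{0,2}\otimes\Omega$). Stability therefore gives $\deg E^{0,2}<0$ and $\deg (E^{1,1}\oplus E^{0,2})<0$, i.e.\ $(L_1+2L_2).A>0$ and $(2L_1+L_2).A>0$ --- the \emph{opposite} signs from the inequalities you state ($\deg E^{2,0}+\deg E^{1,1}<0$ and $\deg E^{2,0}<0$). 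With your signs, the argument you sketch for part (4) would conclude that some $L_i.A<0$, which is useless; with the correct signs, the existence of some $i$ with $L_i.A>0$ is immediate from $(L_1+2L_2).A>0$, which is exactly what the paper does.

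Second, the step you yourself flag as ``where the real work lies'' --- forcing $L_1^2=L_2^2=L_1.L_2=0$ rather than merely bounding these numbers --- is precisely the step you do not supply, and it does not follow from the ingredients you list. The paper's mechanism is: (a) non-bigness of $M$ combined with $3m\ge -3e^{0,2}$ and $3m\ge 3e^{2,0}$ and Lemma \ref{nefness} forces $(e^{0,2})^2\le 0$ and $(e^{2,0})^2\le 0$; (b) the relation $c_2(E)=0$ then yields $L_1^2=-3(e^{0,2})^2\ge 0$ and $L_2^2=-3(e^{2,0})^2\ge 0$, whence some $L_i|_{\bar Y}$ is nef with $L_i^2=0$; and (c) --- the ingredient entirely absent from your outline --- one restricts $(E,\theta)$ to $\bar Y$ and uses its semistability with respect to the \emph{nef, non-ample} polarization $L_1|_{\bar Y}$ itself (legitimate because the restricted Higgs bundle has vanishing Chern classes and is stable for $A$) to obtain $L_1.(L_1+2L_2)\ge 0$, hence $L_1.L_2\ge 0$. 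Without (c), the Zariski semi-decomposition of $M=L_1+B_1$ only gives $L_1.M=0$, hence $L_1.L_2=-L_1.B_2\le 0$, and the sign of $a$ stays uncontrolled --- exactly the difficulty you identify. (Your Mehta--Ramanathan restriction is with respect to $A$ and does not substitute for this.) So as written the proposal does not close: it needs the corrected stability inequalities and the nef-polarization semistability trick, after which parts (2) and (3) follow by Hodge index and the bookkeeping you describe.
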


\begin{proof} We claim that the map
$$\alpha : L_1\oplus L_2\to \Omega_{\bar X} (\log D)$$
induced from $\theta$ has rank $1$. Indeed, in the local coordinates
$\alpha$ is given by two logarithmic $1$-forms $\omega_1$ and $\omega_2$ and the
integrability of $\theta$ implies that $\omega_1\wedge \omega_2=0$, so
they are proportional and $\alpha$ has rank $1$. Let $M$ be the
saturation of the image of $\alpha$ in $\Omega_{\bar X}(\log D)$ (i.e., the largest
rank $1$ subsheaf of $\Omega_{\bar X}(\log D)$ containing the image of $\alpha$). Then $M$
is a line bundle (as it is a rank $1$ reflexive $\cO_X$-module on a smooth variety) such that $\Omega_{\bar X}(\log D)/M$ is torsion free.

By construction we have inclusions
$L_1 \subset M$
and
$L_2 \subset M$
defining effective divisors $B_1$ and $B_2$, respectively. Let us set
$e^{p,q}=c_1(E^{p,q})$, $l_i=c_1(L_i)$ and $b_i=c_1(B_i)$. Then we can
write
$$m=c_1(M)=l_1+b_1=l_2+b_2.$$
By assumption we have
$$e^{0,2}+e^{1,1}+e^{2,0}= 0$$
in the rational cohomology $H^2(\bar X, \QQ)$, so the classes $e^{p,q}$ can
be written in terms of $l_1$ and $l_2$ in the following way:
$$
e^{2,0}= \frac{1}{3} (2l_1+ l_2), \quad e^{1,1} =\frac{1}{3} (l_2-l_1) ,\quad
e^{0,2}=\frac{1}{3} (-l_1-2l_2).
$$
Therefore
$$3 m=l_1+b_1+2(l_2+b_2)\ge
l_1+2l_2=-3e^{0,2}.$$
In particular, we have $m\ge -e^{0,2}$.
Similarly, we have
$$3 m=2(l_1+b_1)+(l_2+b_2)\ge 2l_1+l_2=3e^{2,0},$$
so $m\ge e^{2,0}$.

 The following lemma is a corollary of
the well-known Bogomolov's lemma:

\begin{Lemma}\label{not-big}
For any line bundle $N\subset \Omega_{\bar X}(\log D)$ the restriction $N|_{\bar Y}$ is not big on $\bar Y$.
\end{Lemma}

\begin{proof} We have a short exact sequence
$$0\to \cO_{\bar Y} (-A)^{n-2}\to \Omega_{\bar X}(\log D)|_{\bar Y}\to \Omega_{\bar Y}(\log D_Y)\to 0.$$
Let us consider the composition $N|_{\bar Y}\to \Omega_{\bar X}(\log D)|_{\bar Y}\to \Omega_{\bar Y}(\log D_Y)$.
By Bogomolov's lemma (see \cite[Theorem 4]{Bog} and \cite[Corollary 6.9]{EV})  $\Omega_{\bar Y}(\log D_Y)$ does not contain big line bundles.
So if this composition is non-zero then $N|_{\bar Y}$ is not big.

If the map $N|_{\bar Y}\to \Omega_{\bar Y}(\log D_Y)$ is zero then by the above exact sequence
$N|_{\bar Y}$ is contained in $\cO_{\bar Y} (-A)^{n-2}$, so $\kappa (N|_{\bar Y})=-\infty$.
\end{proof}

\medskip

Slope $A$-stability of  $(E, \theta)$ implies that
$$e^{0,2}. \, A < 0$$
and
$$-e^{2,0}. \, A=(e^{0,2}+e^{1,1}). \, A < 0.$$
If $(e^{0,2})^2>0$ then,  thanks to Lemma \ref{nefness}, the first inequality implies that $-e^{0,2}|_{\bar Y}$
is big. But then $M|_{\bar Y}$ is big, which contradicts the above lemma.
Therefore $$(e^{0,2})^2\le 0.$$ Similarly, if $(e^{2,0})^2>0$ then
$e^{2,0}|_{\bar Y}$ is big. But then $M|_{\bar Y}$ is big, a contradiction. Therefore
$$(e^{2,0})^2\le 0.$$

Let us also recall that $$0=c_2(E)=e^{2,0}.\, e^{1,1}+e^{1,1}.\,
e^{0,2}+e^{0,2}. \, e^{2,0}=-(e^{2,0})^2+e^{1,1}.\, e^{0,2}$$
Therefore
$$L_2^2=(e^{1,1}+e^{0,2})^2-4e^{1,1}.\, e^{0,2}=(e^{2,0})^2-4(e^{2,0})^2=-3(e^{2,0})^2\ge 0.$$
Similarly, we have
$$L_1^2=(e^{2,0}-e^{1,1})^2=-3 (e^{0,2})^2\ge 0.$$

From stability of $E$ we get $(L_1+2L_2).A=-3e^{0,2}.A>0$ and hence there exists
$i$ with $L_i.A>0$. Lemma \ref{nefness} and Lemma \ref{not-big} imply 
that $L_i|_{\bar Y}$ is nef
with $L_i^2=0$ and the degree of $L_i|_{\bar Y}$ is strictly positive.

\begin{Lemma}\label{intersection}
  We have $L_1^2=L_1.L_2=L_2^2=0$.
\end{Lemma}

\begin{proof}
  Let us first assume that $i=1$, i.e., $L_1|_{\bar Y}$ is nef with
  $L_1^2=0$. Since $B_2$ is effective, we have $L_1.B_2\ge 0$.  By
  Lefschetz' hyperplane theorem for quasi-projective varieties (see
  \cite[Theorem 1.1.3]{HL}) we have $\pi_1(Y)\simeq \pi_1(X)$. So by
  functoriality of the correspondence between representations and
  Higgs bundles, the restriction
$$(E|_{\bar Y},\theta_Y: E|_{\bar Y}\mathop{\to}^{\theta|_Y} E|_{\bar Y}\otimes \Omega_{\bar X}(\log D)|_{\bar Y}\to E|_{\bar Y}\otimes \Omega_{\bar Y}(\log D_Y))$$
of $(E,\theta)$ to $\bar Y$ is $A$-stable (one can also give a direct
proof of this fact: see \cite[Theorem 12]{La2} and \cite{La3}).  Since
the rational Chern classes of $E|_{\bar Y}$ vanish, $(E|_{\bar
  Y},\theta_Y)$ is stable with respect to every stable polarization
and semistable with respect to every nef polarization.  In particular,
$L_1|_{\bar Y}$-semistability of $(E|_{\bar Y}, \theta _Y)$ implies
that $L_1 . (L_1+2L_2) \ge 0$. Therefore we have $L_1.L_2\ge 0$.

  Applying Lemma \ref{Zariski-semi-decomposition} to $M=L_1+B_1$ we
  see that $L_1^2=0$, $L_1.B_1=0$ and $L_1.M=0$. Using $M=L_2+B_2$ we get equalities
  $L_1.L_2=L_1.B_2=0$. But then
$$c_2(E)=\frac{1}{9}(L_1^2+L_1.L_2+L_2^2)=0$$ implies that
  $L_2^2=-L_1.L_2=0$.

  The proof in the case $i=2$ is analogous.
\end{proof}

This lemma finishes the proof of assertions (1) and (4) of the
proposition. To prove (2), let us consider $i'$ such that
$\{i,i'\}=\{1,2\}$. Let us choose a rational $a$ such that
$(L_{i'}-aL_i).A=0$. Since $(L_{i'}-aL_{i})^2=0$ this implies that
$l_{i'}=a l_{i}$ in $H^2(X, \QQ)$.

Assertion (3) of the proposition follows from the proof of Lemma
\ref{intersection}.  More precisely, if $i=1$ then this proof shows
that $L_1.M=L_1.B_1=L_1.B_2=0$. So (2) implies that
$L_2.M=L_2.B_1=L_2.B_2=0$. The proof in case $i=2$ is analogous.
\end{proof}

\begin{Remark}
Let us remark that stability of $E$ implies that $(L_1+2L_2).A>0$ and
$(2L_1+L_2).A>0$ so that $a>-\frac{1}{2}$ in the notation of the above
proof.  In fact, this condition is sufficient to define a stable rank
$3$ logarithmic system of Hodge bundles.
\end{Remark}

\medskip

{
Let us recall that a smooth log pair is a pair consisting of a smooth variety and a (reduced)
simple normal crossing divisor. A morphism of log pairs $\bar f: (\bar
Z, D_Z)\to (\bar X, D)$ is a proper morphism of normal varieties such that
$\bar f (D_Z)=D$.}
For a morphism of smooth projective log pairs $\bar f: (\bar
Z, D_Z)\to (\bar X, D)$, we write $Z:=\bar Z-D_Z$, we choose a point $z$
over $x$ and we set $f=\bar f|_{\bar Z-D_Z}$.

\medskip

Let $\rho: \pi_1(X, x)\to \SL (n, \CC)$ be an irreducible
representation with quasi-unipo\-tent monodromy at infinity (i.e., such
that $\rho$ has quasi-unipotent monodromies along small simple loops
around all irreducible components of $D$). Then by Kawamata's covering
trick there exists a finite flat morphism of smooth projective log
pairs $\bar f: (\bar Z, D_Z)\to (\bar X, D)$ such that $({\bar
  f}^*D)_{red}=D_Z$ and $f^*\rho: \pi_1(Z,z)\to \SL (n, \CC)$ has
unipotent monodromy at infinity, i.e., $f^*\rho$ has unipotent local
monodromies along all irreducible components of $D_Z$. This implies
that the residues of Deligne's canonical extension of the flat bundle
associated to $f^*\rho$ are nilpotent (since all the eigenvalues of
the residues are zero) and we are interested in bundles with trivial
parabolic structure.

More precisely, let us consider a vector bundle with connection $(V,
\nabla)$ with regular singularities and rational residues in $[0,1)$
along the irreducible components of $D$, that corresponds to the
representation $\rho$. This bundle comes equipped with a canonical
parabolic structure, which makes the corresponding parabolic flat
bundle stable with vanishing parabolic Chern classes (cf. \cite{Mo1},
\cite{Mo2} and \cite[Lemma 3.3]{IS1}).

If we assume the local system $V_{\rho}$ underlies a complex variation
of Hodge structure then $f^*\rho$ also underlies a complex VHS.  The
logarithmic parabolic Higgs bundle corresponding to $\rho$ is stable
and it has vanishing parabolic first and second Chern classes (see
\cite{Mo1} and \cite{Mo2}). The pullback of this logarithmic parabolic
Higgs bundle to $(\bar Z, D_Z)$ (cf. \cite[Lemma 3.7]{IS1}) becomes a
stable logarithmic system of Hodge bundles (with trivial parabolic
structure) which has vanishing all rational Chern classes (by
functoriality of the Kobayashi--Hitchin correspondence, the Higgs
bundle associated to the pull back $f^*\rho$ corresponds to a complex
VHS, so it is a system of Hodge bundles).  Thus we have the following
lemma:

\begin{Lemma}
  Let $\rho: \pi_1(X, x)\to \SL (n, \CC)$ be an irreducible
  representation with quasi-unipotent monodromy at infinity. If $\rho$
  underlies a complex VHS then there exists a finite flat morphism of
  smooth projective log pairs $\bar f: (\bar Z, D_Z)\to (\bar X, D)$
  such that $f^*\rho: \pi_1(Z,z)\to \SL (n, \CC)$ has unipotent
  monodromy at infinity.  To this representation the
  Kobayashi--Hitchin correspondence associates a stable rank $n$
  system of logarithmic Hodge sheaves with vanishing rational Chern
  clases.
\end{Lemma}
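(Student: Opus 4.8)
The statement collects the pieces developed in the preceding paragraphs, so the plan is to assemble them in order.

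\emph{Step 1: construct the covering.} Since $\rho$ is quasi-unipotent at infinity, there is an integer $N\geq 1$ with $\rho(\gamma_i)^{N}$ unipotent for every loop $\gamma_i$ around a component of $D$. Apply Kawamata's covering trick to $(\bar X, D)$ (cf.\ \cite{EV}): after enlarging $D$ by an auxiliary very ample divisor one builds a finite flat morphism from a smooth projective variety with all ramification indices along the components of $D$ divisible by $N$, and then resolves to obtain a finite flat morphism of smooth projective log pairs $\bar f:(\bar Z, D_Z)\to(\bar X, D)$ with $(\bar f^{*}D)_{\mathrm{red}}=D_Z$. The divisibility of the ramification orders forces the local monodromies of $f^{*}\rho$ to be unipotent, so the residues of Deligne's canonical extension of the flat bundle attached to $f^{*}\rho$ are nilpotent.

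\emph{Step 2: the parabolic Kobayashi--Hitchin correspondence.} To $\rho$ corresponds a flat bundle $(V,\nabla)$ with regular singularities whose residues along the components of $D$ have eigenvalues with real part in $[0,1)$; it carries a canonical parabolic structure, and by the results of Mochizuki and Iyer--Simpson (\cite{Mo1}, \cite{Mo2}, \cite[Lemma 3.3]{IS1}) the associated parabolic flat bundle is stable with vanishing parabolic Chern classes, and the correspondence attaches to it a stable parabolic Higgs bundle with vanishing parabolic first and second Chern classes. If in addition $V_{\rho}$ underlies a complex VHS, then so does $f^{*}\rho$, and the corresponding parabolic Higgs bundle is a $\CC^{*}$-fixed point, i.e.\ a parabolic system of Hodge bundles.

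\emph{Step 3: pull back and conclude.} Pull the parabolic Higgs bundle back along $\bar f$ (using \cite[Lemma 3.7]{IS1}). Because the local monodromies of $f^{*}\rho$ are unipotent, the induced parabolic weights on $(\bar Z, D_Z)$ are all integral, hence the parabolic structure is trivial; the pulled-back object is therefore an ordinary logarithmic Higgs bundle on $(\bar Z, D_Z)$ whose parabolic Chern classes equal its ordinary rational Chern classes, so the latter vanish. By functoriality of the tame Kobayashi--Hitchin correspondence this logarithmic Higgs bundle is the one attached to the VHS $f^{*}\rho$, hence it is a system of logarithmic Hodge bundles; stability persists because an object with vanishing rational Chern classes that is stable for one polarization is stable for every stable polarization and semistable for every nef one. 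This yields the asserted stable rank $n$ system of logarithmic Hodge sheaves with vanishing rational Chern classes. The main obstacle is the parabolic bookkeeping: one must check that Kawamata's trick can be arranged to give a genuinely \emph{flat} morphism of smooth log pairs with $(\bar f^{*}D)_{\mathrm{red}}=D_Z$ and ramification orders divisible by the quasi-unipotency index, and, crucially, that under this pullback the parabolic weights become integral so that vanishing of parabolic Chern classes upstairs upgrades to vanishing of ordinary Chern classes; the precise functoriality of the tame parabolic Kobayashi--Hitchin correspondence under finite pullbacks, for which one leans on Mochizuki and Iyer--Simpson, is the technical heart of the argument.
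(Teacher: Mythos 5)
Your proposal is correct and follows essentially the same route as the paper: the paper's ``proof'' is precisely the discussion in the paragraphs preceding the lemma (Kawamata's covering trick to make the monodromy unipotent, the canonical parabolic structure and the Mochizuki/Iyer--Simpson results giving a stable parabolic Higgs bundle with vanishing parabolic Chern classes, and pullback along $\bar f$ trivializing the parabolic structure so that the rational Chern classes vanish and the object is a system of Hodge bundles by functoriality). Your Steps 1--3 reassemble exactly those ingredients, with a bit more explicit bookkeeping on why the pulled-back parabolic weights are integral.
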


In case the underlying rank $3$ logarithmic system of Hodge bundles on
$(\bar Z, D_Z)$ is of type $(1,1,1)$, we can apply Proposition
\ref{rank-3-structure}.

\medskip

\begin{Remark}
  If $\rho$ is a rigid irreducible representation with
  quasi-unipotent monodromy at infinity, then it underlies a complex
  VHS (this part of \cite[Theorem 8.1]{CS} works in any rank) and we
  can apply the above lemma to such representations.
\end{Remark}

\section{Geometry of complex VHS of type $(1,1,1)$}
\label{sec-geometry}

Let $X$ be a smooth complex projective variety. If $\rho:
\pi_1(X,x)\to\SL (n,\CC)$ is a representation then the induced
projective representation $ \pi_1(X)\to \PGL (n,\CC)$ is denoted by
$\bar \rho$.

\begin{Theorem}\label{main_thm_on_type_(1,1,1)}
  Let $\rho:  \pi_1(X,x)\to \SL (3,\CC)$ be an irreducible representation coming
  from a complex variation of Hodge structure of type $(1,1,1)$.  Let
  $V_{\rho}$ denote the corresponding local system.  Then one of the
  following holds:
\begin{enumerate}
\item There exists a projective representation $\pi_1(X,x)\to
    \PGL (2,\CC)$ which induces $\bar\rho$ via the homomorphism $\PGL(2,
    \CC)\to \PGL (3, \CC)$ given by second symmetric power. Moreover, there exists a finite covering
    $\pi:Y\to X$ from a smooth projective variety $Y$, a rank $1$
    local system $W_1$ on $X$ such that $W_1^{\otimes 3}$ is trivial
    and a rank $2$ local system $W_2$ on $Y$ such that
    $\pi^*V_{\rho}=\pi^*W_1\otimes \Sym ^2W_2$.
\item $\rho$ projectively factors through an orbicurve.
\end{enumerate}
\end{Theorem}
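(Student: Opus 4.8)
The strategy is to reduce immediately to the surface case and then exploit Proposition \ref{rank-3-structure}. First I would pass, via Lemma \ref{lifting}, to a finite cover where the monodromy of $\rho$ lifts appropriately and the associated system of Hodge bundles is genuinely a system of Hodge bundles of type $(1,1,1)$ on a smooth projective variety; by the last Remark of Section \ref{structure=(1,1,1)} rigidity is not needed here, only that $\rho$ underlies a VHS of type $(1,1,1)$. Then, using Lefschetz and the fact that $\pi_1(\bar Y)\cong\pi_1(X)$ for a general complete intersection surface $\bar Y$, together with the functoriality of the Higgs correspondence already invoked in the proof of Lemma \ref{intersection}, I would descend the whole problem to a smooth projective surface. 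On that surface Proposition \ref{rank-3-structure} gives: the classes $l_1,l_2$ of $L_1,L_2$ lie on a common line in $H^2$, at least one $L_i$ (say $L_1$) is nef with $L_1^2=0$ and strictly positive degree, and $L_1^2=L_1.L_2=L_2^2=0$; write $l_2=a\,l_1$ with $a\in(-\tfrac12,1]$ as in the Remark following that proposition. The goal is to show either $a=1$ and $L_1\cong L_2$ (whence $E$ is forced to be $\mathrm{Sym}^2$ of a rank $2$ system associated to $\cO_X\oplus L_1$, giving conclusion (1)), or else a fibration over a curve appears (conclusion (2)).

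The engine for producing the fibration is the nef, square-zero, positive-degree line bundle $L_1\hookrightarrow\Omega^1_X$. A sub-line-bundle of $\Omega^1_X$ with these numerical properties produces, by the standard Castelnuovo--de Franchis / Bogomolov--Beauville type arguments (and since $L_1\subset M$ with $M$ saturated and $L_i.M=0$, $L_i.B_j=0$), a fibration $f:X\to C$ to a curve, after possibly passing to a further finite cover and contracting the semi-negative divisors $B_i$; the $1$-forms generating the image of $\theta$ become pullbacks of forms on $C$. Concretely I would use the structure result of Section \ref{structure=(1,1,1)} (Example \ref{general-form} is precisely the model) to say that $M=f^*(\text{something on }C)$, i.e. the Higgs field factors through $f^*\Omega^1_C$. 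Once the Kodaira--Spencer maps of the VHS factor through $df:f^*\Omega^1_C\to\Omega^1_X$, the VHS is "constant along the fibers": the restriction of $(E,\theta)$ to a general fiber $F$ has zero Higgs field, hence the restriction of $V_\rho$ to $\pi_1(F)$ is unitary, in particular reducible (rank $3$, and if irreducible unitary one checks via the VHS structure of type $(1,1,1)$ it cannot be irreducible unitary). By Corollary \ref{virtually-implies-usually} — valid since $\rho$ has more than a single Hodge type, so by Lemma \ref{virtually-reducible-is-not-(1,1,1)} it is not virtually reducible and not virtually tensor decomposable — reducibility on a fiber is equivalent to $\rho$ projectively factoring through an orbicurve, which is conclusion (2).

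It remains to analyze the case where no such fibration is forced, and to extract $a=1$ and $L_1\cong L_2$. Here the point is that if $\theta:E^{2,0}\to E^{1,1}\otimes M$ and $\theta:E^{1,1}\to E^{0,2}\otimes M$ are both, after saturation, isomorphisms onto $M$-twists (which is what $L_i.M=0,L_i.B_j=0$ plus not-factoring-through-a-curve should buy: the $B_i$ must be trivial, so $M=L_1=L_2$ and $a=1$), then $E\cong L_0\otimes(\cO_X\oplus L_1^{-1}\oplus L_1^{-2})$ with the obvious Higgs field, which is exactly $L_0\otimes\mathrm{Sym}^2$ of the rank $2$ Higgs bundle $(\cO_X\oplus L_1^{-1},\ \mathrm{id}:\cO_X\to L_1^{-1}\otimes L_1)$ twisted to have trivial determinant; $3c_1(L_0)=(2+a)c_1(L_1)=3c_1(L_1)$ with the vanishing Chern class constraints forces $L_0\cong L_1$ up to a torsion line bundle $W_1$ with $W_1^{\otimes 3}$ trivial, and the rank $2$ system $W_2$ lives on a finite cover $Y\to X$ trivializing $W_1$ and the square root needed to split $\mathrm{Sym}^2$. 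The projective representation $\pi_1(X)\to\mathrm{PGL}(2,\CC)$ comes from the projectively flat rank $2$ Higgs bundle $\cO_X\oplus L_1$. I expect the main obstacle to be exactly the dichotomy step: ruling out, when no fibration exists, the possibility that $B_1$ or $B_2$ is a nonzero (semi-negative) divisor — equivalently showing the saturations force $B_i=0$ — which is where one must combine stability of $(E,\theta)$, the precise intersection vanishing from Proposition \ref{rank-3-structure}, Bogomolov's lemma ($M$ not big), and the factorization theorem of the Appendix to convert "$L_1$ and $L_2$ are numerically proportional but not equal" into an actual map to a curve.
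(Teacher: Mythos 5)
Your overall architecture --- a dichotomy between the $\Sym^2$ case (where $B_1'=B_2'=0$, $L_1=L_2$, $a=1$, giving conclusion (1)) and a factorization case --- matches the paper's, and you have correctly located where the difficulty sits. But there is a genuine gap exactly there: the step ``a sub-line-bundle of $\Omega^1_X$ with these numerical properties produces, by standard Castelnuovo--de Franchis / Bogomolov--Beauville type arguments, a fibration $f:X\to C$'' is not an argument, and the cited mechanisms do not apply as stated. Castelnuovo--de Franchis requires two linearly independent global holomorphic $1$-forms with vanishing wedge product; here one has only a single saturated line subbundle $M\subset\Omega^1_X$, and not even one global section of $\Omega^1_X$. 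The paper's actual route in the case $B_2'\neq 0$ is: use the Hodge index theorem to get $bM'\equiv cB_2'$, so that $U:=bM'-cB_2'$ lies in $\Pic^{\tau}(X)$; extract a $b$-th root $V$ of $U$ in $\Pic^0(X)$ to obtain a section $\eta$ of $(M'\otimes V^{\ast})^{\otimes b}$; pass to the $b$-th root cover $p:Z\to X$ to get a \emph{twisted} $1$-form $\alpha\in H^0(Z,\Omega^1_Z\otimes p^{\ast}V^{\ast})$, i.e.\ a point $p^{\ast}V^{\ast}$ of the jump locus $S(Z)\subset\Pic^0(Z)$; and then split into cases. If $p^{\ast}V^{\ast}$ is non-torsion, Beauville's theorem on positive-dimensional components of $S(Z)$ produces the fibration; if it is torsion, one passes to a further \'etale cover where an honest $1$-form appears and studies its Albanese map. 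None of this construction is present in your sketch, and the case $B_2'=0$ shows it genuinely needs $B_2'\neq 0$ as input --- the numerical data alone ($L_1$ nef, $L_1^2=0$, positive degree) do not force a fibration.

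Moreover, even once a (twisted) $1$-form is in hand, it does not automatically yield a map to a curve: the form may define a foliation with no algebraic leaves. This is the paper's Subcase 1.2 (Albanese image of dimension $\geq 2$), which is resolved not by producing a fibration but by Simpson's Lefschetz theorem for integral leaves of a $1$-form: the VHS splits on a leaf, the monodromy is forced to be solvable, and one reaches a contradiction with the type $(1,1,1)$ hypothesis. Your proposal silently assumes this case cannot occur. Two smaller problems: ``contracting the semi-negative divisors $B_i$'' is not available (semi-negative, unlike negative, configurations need not be contractible, and the paper contracts nothing --- it only removes the common part $B$ to form $M'=M(-B)$); and the opening reduction ``descend the whole problem to a smooth projective surface'' is not how the paper proceeds and is itself delicate, since a fibration of a general complete-intersection surface over a curve does not obviously extend to a fibration of $X$ itself, which is what conclusion (2) requires.
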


\begin{proof} We can assume that $X$ has dimension at least $2$ as
  otherwise there is nothing to be proven.  Therefore the complex
  variation of Hodge structure corresponding to $\rho$ is of the form
  described in Proposition \ref{rank-3-structure}. Let us write
  $L_2\equiv aL_1$. Since the assertion for a representation is
  equivalent to the assertion for its dual, we can assume that
 for a general complete intersection surface $S\subset X$ the restriction $L_1|_S$ is nef with strictly positive degree and $a\le 1$.

  In the notation of Proposition \ref{rank-3-structure} we have
  $M= L_1(B_1)= L_2(B_2)$.  Let us write $B_1=B+B_1'$ and
  $B_2=B+B_2'$, where $B$, $B_1'$, $B_2'$ are effective and $B_1'$ and
  $B_2'$ have no common irreducible components. Recall that
  $L_1.B=L_1.B_1'=0$ and $L_2.B=L_2.B_2'=0$. We set $M':=M(-B)$. This
  line bundle has a canonical inclusion into $\Omega_X$ given by
  composing $M(-B)\subset M$ with $M\to \Omega_X$. By definition we
  also have $M'=L_1(B_1')=L_2(B_2')$.  Intersecting both sides of
  equality $L_1+B_1'=L_2+B_2'$ with $B_1'$ we get
$$(B_1')^2=B_1'.B_2'\ge 0$$
(here we use that $L_2.B_1'=aL_1.B_1'=0$,
and since
$B_1'$ and $B_2'$ have no common components their
intersection is positive).
But Lemma \ref{Zariski-semi-decomposition} implies that 
$(B_1')^2\le 0$, so we have $(B_1')^2=B_1'.B_2'=0$. Hence we also
get $(B_2')^2=0$.  There exists also some non-negative $\alpha_1$ such
that $(B_1'-\alpha_1L_1).A=0$. Since $(B_1'-\alpha_1L_1)^2=0$, we have
$B_1'\equiv \alpha_1L_1$. Similarly, there exists some non-negative
$\alpha_2$ such that $B_2'\equiv \alpha_2L_1$.  Note also that
$B_i'=0$ is equivalent to $\alpha_i=0$, since if $B_i'$ is a non-zero effective
divisor then $B_i'.A>0$.  Let us recall that
$L_1+B_1'=L_2+B_2'$, so $(1+\alpha_1)L_1\equiv (a+\alpha_2)L_1$.  But
$L_1.A>0$, so $1+\alpha_1=a+\alpha_2$. Since $a\le 1$, this implies
that $0\le \alpha_1\le \alpha_2$.

We consider two cases depending on whether $B_2'$ is zero or not.

If $B_2'=0$ then $ \alpha_2=\alpha_1=0$, $B_1'=0$ and $L:=L_1=L_2$
with the same map to $M$ and $\Omega_X$.  In this case we set
$N:=E^{2,0}\otimes L_1^{-1}$.  Since $\det E=N^{\otimes 3}$, the line
bundle $N$ is $3$-torsion.  Therefore $(E, \theta)\simeq (N,
  0)\otimes \Sym^2(F)\otimes \det (F)^{-1}$, where $F=(F^{1,0}\oplus
F^{0,1}, \theta_F)$ is a system of Hodge bundles with $F^{1,0}=L$,
$F^{0,1}=\cO_X$ and the Higgs field $\theta_F$ given by the inclusion
$L\to \Omega_X$.  Note that $F$ is an $A$-stable system of Hodge
bundles but it does not have vanishing Chern classes so it does not
underlie a rank $2$ representation of $\pi_1(X,x)$. But it comes from
a projective representation $\pi_1(X,x)\to \PGL(2,\CC)$.  There exists a
finite covering $\pi: Y\to X$ such that $\pi^*L=M^{\otimes 2}$ for
some line bundle $M$ on $Y$. Then $\pi^*(E, \theta)\simeq \pi^* (N,
0)\otimes \Sym^2(F_Y)$, where $F_Y=(F_Y^{1,0}\oplus F_Y^{1,0},
\theta_{Y})$ is a system of Hodge bundles with $F^{1,0}_Y=M$,
$F^{0,1}_Y=M^{-1}$ and the Higgs field $\theta_{Y}$ given by the
inclusion $\pi^*L\to \pi^*\Omega_X\to \Omega_Y$.

This corresponds to case (1) of the theorem.

If $B_2'\ne 0$,
then by the Hodge index theorem there exist positive integers $b$ and $c$ such
that $bM'\equiv cB_2'$.
The idea is now to say that $M'$ looks like an effective divisor. Roughly speaking, we try to get a section of $M'$ and then argue using this differential form. In practice we need to extract a root, and we only have numerical information so, in the case where the Picard scheme has positive dimension, we need to deal with several cases.

Start by noting that
$U:=bM'-cB_2'\in \Pic ^{\tau}
(X)$. Multiplying $b$ and $c$ by the same positive integer we can
assume that in fact $U\in \Pic ^0(X)$. Therefore we have a nonzero map
$$
U \rightarrow (M')^{\otimes b}
$$
determined by the divisor $cB_2'$. We can write $U=V^{\otimes b}$
for some $V\in \Pic^0(X)$ and hence we get a section $\eta \in
H^0(X,(M'\otimes V^{\ast})^{\otimes b})$ (whose divisor is $cB_2'$).

Let $p:Z\rightarrow X$ be a desingularization of the ramified covering
defined by taking the $b$-th root of $\eta$.  Over $Z$ we have a
tautological section $\alpha \in H^0(Z, p^{\ast}(M'\otimes V^{\ast}))$
such that $\alpha ^{\otimes b}=p^*\eta$.

In particular, $\alpha \in H^0(Z, \Omega ^1_Z\otimes
p^{\ast}V^{\ast})$, which means that $p^{\ast}V^{\ast}\in \Pic ^0(Z)$
is in the jump-locus for twisted sections of $\Omega ^1_Z$:
$$S(Z):=\{N\in \Pic^0(Z)|\, \dim H^0(Z, \Omega_Z^1\otimes N)\ge 1\}.$$
The irreducible components of this jump-locus are translates of
abelian subvarieties of ${\rm Pic}^0(Z)$ by torsion points.
The fact that the components are translates of abelian varieties
was proved in \cite{GL} and the assertion about torsion was proved in \cite{Si-jump}.
We distinguish two cases depending on whether $p^{\ast}V^{\ast}$ is a
torsion point or not.

\medskip

\emph{Case 1.} Let us assume that $p^{\ast}V^{\ast}$ is a torsion point of
${\rm Pic}^0(Z)$.

\medskip

Then the line bundle $p^{\ast}V^{\ast}$ defines a
finite \'etale covering $q:Z'\rightarrow Z$ such that $V^{\ast}$
becomes trivial after pulling back to $Z'$. Let $p'$ denote the
composition of $q:Z'\to Z$ with $p:Z\to X$.  Then over $Z'$ we get a nonzero
section $\xi\in H^0(Z', \Omega ^1_{Z'})$ given by the composition of
$q^*\alpha: \cO_{Z'}\to q^*(p^{\ast}(M'\otimes V^{\ast}))\simeq
q^*(p^{\ast}(M'))=(p')^*M'$ with the canonical map $(p')^*M'\to
(p')^*\Omega_X\to \Omega^1_{Z'}$.

The line bundle $(p')^{\ast}M'\subset
\Omega^1_{Z'}$ is generically generated by this section $\xi$. 

We can look at the Albanese map defined by $\xi$ (see
\cite[p.~101]{Si2}). This is the map $\psi: Z'\to A=\Alb (Z')/B$,
where $B$ is the sum of all abelian subvarieties of $\Alb (Z')$ on
which $\xi$ vanishes. By construction $\xi=\psi^*(\xi_A)$ for some
$1$-form $\xi_A$ on $A$ such that the restriction of $\xi_A$ to any
nontrivial abelian subvariety of $A$ is nonzero.

We distinguish two subcases depending on the dimension of $\psi (Z')$:

\medskip

\emph{Subcase 1.1.} The image of $\psi$ is a curve.

\medskip

In this case taking the Stein
factorization of $Z'\to \psi (Z')$, we get a fibration $f:Z'\to C$
over a smooth projective curve and a $1$-form $\xi_C$ on $C$ such
that $\xi=f^*\xi _C$. Considering $\xi$ as a map $\cO_{Z'}\to
\Omega_{Z'}$, this means that the map factors through $f^*\xi_C:
\cO_{Z'}=f^*\cO_C\to f^*\Omega_C$. Therefore $(p')^*M$ is contained
in the saturation of $f^*\Omega_C$ in $\Omega_{Z'}$.  If $F$ is a
smooth fiber of $f$ (or just a multiplicity $1$ irreducible
component of a fiber of $f$) then this shows that the canonical
map $(p')^*M|_F\to \Omega_{Z'/C}|_F=\Omega_F$ is zero.
This implies that the Kodaira-Spencer maps of our variation of
Hodge structures restricted to the fiber $F$, vanish.
Each Hodge subbundle is therefore a flat subbundle.
Hence the
restriction of our variation of Hodge structures to $F$ splits into
a direct sum of three rank $1$ variations of Hodge structure
at the three different Hodge types.
Clearly, $\rho$ is not virtually tensor decomposable. It is also  not virtually reducible as
the Kodaira-Spencer maps remain
nonzero under alterations.
So by Theorem \ref{factorequiv} ($4\Rightarrow 1$), $\rho$ projectively factors through an
orbicurve.

\medskip

\emph{Subcase 1.2.} The dimension of the image  $\psi (Z')$ is at least $2$.

\medskip

Let $\tilde A\to A$ be the universal covering of $A$ (treated as a
complex vector space). This map is only analytic and not algebraic.
Let $\tilde Z:=Z'\times_A\tilde A$ be the covering of $Z'$ defined by
the Albanese map $\psi$ and let $\pi: \tilde Z\to Z'$ be the
projection on the first factor and $\tilde \psi: \tilde Z\to \tilde A$
on the second factor (these maps are also only analytic). Let us fix a
point $\tilde z_0\in \tilde Z$.  There exists a unique linear function
$g_{\tilde A}:{\tilde A}\to \CC$ such that $g_{\tilde A}(\tilde \psi
(\tilde z_0))=0$ and $dg_{\tilde A}=\xi _A$. Let $g: \tilde Z\to \CC$
be the composition of $g_{\tilde A}$ with $\tilde \psi$.  Then the
Lefschetz theorem (see \cite[Theorem 1]{Si2}; since $\dim (\psi (Z'))\ge 2$ the assumptions of
  this theorem are satisfied) says that
for any fibre $F$ of $g$ (or in other words a possibly
  non-compact leaf of the foliation defined by $\xi$), the map $\pi
_1(F, \tilde z_0) \rightarrow \pi _1(\tilde{Z}, \tilde z_0)$ is
surjective.

Note that by our construction the restriction $\pi^*\xi |_F$ is zero.
Choosing $F$ so that it does not lie in the preimage of $B_2'$, the
inclusion $M'\to \Omega_X$ becomes zero after pulling back to $F$ and
composing with the canonical map to $\Omega_F$. So our variation of
Hodge structure splits on $F$ into a direct sum of three line bundles.
In this case the image of the representation $\pi_1(F,\tilde z_0)\to \SL (3,
\CC)$ is contained in a maximal torus.  Therefore the monodromy
representation is abelian on $\pi _1(\tilde{Z}, \tilde z_0)$.  But $\pi _1(Z', \tilde \psi (\tilde z_0))/\pi
_1(\tilde{Z}, \tilde z_0)$ is an abelian group, so the monodromy of our original
VHS would be solvable. Then the Zariski closure $G\subset \SL(3, \CC)$ of
  $\mathop{\rm im}\, \rho$ is also solvable (this follows from
  \cite[I.2.4 Proposition]{Bo}).  On the other hand, since $\rho$ is
  irreducible, the connected component $G^{0}$ is reductive. Hence
  $G^{0}$ is contained in a maximal torus of $\SL (3, \CC)$.
Taking the \'etale covering $q:\tilde X \to X$ corresponding to the image of
$\pi_1(X,x)\to G/G^{0}$, we see that the monodromy of $q^*V_{\rho}$ is contained
in the maximal torus of $\SL (3,\CC)$. But then the corresponding pull-back of
the system of Hodge bundles is a direct sum of line bundles with trivial maps between
the components, contradicting the fact that the original VHS was of type $(1,1,1)$.

\medskip

\emph{Case 2.} Assume that $p^{\ast}V^{\ast}$ is not a torsion point.

\medskip

Then an irreducible component of $S(Z)$ containing this point has dimension
$\ge 1$.

Let us recall that for a line bundle $N\in \Pic^0(Z)$ we have an isomorphism
of complex vector spaces
$$H^0(Z, \Omega_Z^1\otimes N)\simeq \overline{H^1(Z, N^{-1})},$$
so
$$S(Z)=\{N\in \Pic^0(Z)|\, \dim H^1(Z, N^{-1})\ge 1\}.$$
Beauville proves in \cite{Be} that the positive-dimensional components of this set all
come from maps to orbicurves. More precisely, \cite[Corollaire 2.3]{Be} says that
there exists a fibration $f: Z\to C$ over a smooth projective curve of genus $g\ge 1$
such that $p^{\ast}V^{\ast}$ is trivial on every smooth fiber of $f$.

\medskip
The proof of the following lemma is modelled on the proof of
\cite[Proposition 1.10]{Be}.

\begin{Lemma}
  Let $Z$ be a smooth complex projective variety and let $C$ be a
  smooth complex projective curve.  Let $f:Z\to C$ be a fibration with
  reduced fibers. Let $L\in \Pic ^{\tau} Z$ be a non-torsion line
  bundle, which is trivial on some smooth fiber of $f$. If $0\ne
  \alpha\in H^0(\Omega_Z\otimes L^{-1})$ then there exists a line
  bundle $L_C$ on $C$ and $\beta \in H^0(\Omega_C\otimes
  L_C^{-1})=\Hom (L_C, \Omega_C)$ such that $L=f^*L_C$ and the
  composition
$$L=f^*L_C\mathop{\longrightarrow}^{f^*\beta}f^*\Omega_C\longrightarrow \Omega_Z$$
corresponds to $\alpha$.
\end{Lemma}

\begin{proof}
  First let us recall that existence of $L_C$ such that $L=f^*L_C$
  follows from \cite[Proposition 1.2]{Be}.  We have a short exact
  sequence
$$0\to H^1(C, L_C)\to H^1(Z, L)\to H^0(C, R^1f_*L)\to 0$$
coming from the Leray spectral sequence. Using the isomorphisms
$H^1(Z, L)\simeq \overline{H^0(Z, \Omega_Z^1\otimes L^{-1})}$ and
$ H^1(C, L_C) \simeq \overline{H^0(C, \Omega_C^1\otimes
L_C^{-1})}$,  it is sufficient to show that $ H^0(C, R^1f_*L)=0$.
But $ H^0(C, R^1f_*L)=H^0(C, L_C\otimes R^1f_*\cO_Z)$ and
$R^1f_*\cO_Z$ is dual to $f_*\omega_{Z/C}$, so we need to check
that $\Hom (f_*\omega_{Z/C}, L_C)=0$.  Let us recall that by
\cite[Theorem]{Fu} (see also \cite[Theorem 17]{CD}),
$f_*\omega_{Z/C}$ is a direct sum of an ample vector bundle and a
direct sum of stable vector bundles of degree zero. Therefore
$\Hom (f_*\omega_{Z/C}, L_C)\ne 0$ if and only if $L_C$ is
isomorphic to one of the direct summands of $f_*\omega_{Z/C}$. But
by Deligne's result, the rank $1$ summands of $f_*\omega_{Z/C}$
are torsion (see \cite[Corollary 21]{CD}) and $L_C$ is not
torsion, so we get a contradiction.
\end{proof}

\medskip

Taking a semistable reduction of $f$ we can assume that all the fibres
of $f$ are reduced. Then, by the above lemma, there exists a line bundle $L_C\in \Pic^0(C)$
such that $p^{\ast}V=f^*L_C$ and the section $\alpha$ comes from a
section of $H^0(C,\Omega_C^1\otimes L_C^{\ast})$.  This shows that
$p^{\ast}V^{\ast}\to f^*\Omega_C$ is an isomorphism at the generic
point of $Z$.  Since $p^{\ast}V\to p^{\ast}M$ is also an isomorphism
at the generic point of $Z$, the sheaf $p^*M$ is contained in
$f^*\Omega_C\subset \Omega_{Z}$ (since the fibres of $f$ are reduced,
$f^*\Omega_C$ is saturated in $\Omega_{Z}$).

As before this implies that the pull back of our variation of Hodge
structure to a fibre of $f$ splits into a direct sum of three rank $1$
variations of Hodge structure. Therefore Corollary
\ref{virtually-implies-usually} implies that $\rho$ projectively
factors through an orbicurve.
Alternatively,  the same arguments as before show that
$((p')^*M')^{\otimes b}$ is a line bundle associated to a certain sum
of rational multiples of fibers of $f$ (but not necessarily positive).
Then by passing to a certain cover $Z''\to Z'$ we conclude that the
pull back of our original variation of Hodge structure to $Z''$ is
isomorphic to the pull back of a variation of Hodge structure on a
certain curve $C'$. So  $\rho$ virtually projectively
factors through an orbicurve and we can again conclude by  Corollary
\ref{virtually-implies-usually}.
This completes the proof of Theorem 4.1. 
\end{proof}

\medskip

\section{Some remarks on the case $(1,1,...,1)$}
\label{sec-someremarks}

In this section we consider 
the
possibility of generalisation of the
results of Section \ref{structure=(1,1,1)} to the case
$(1,1,...,1)$. For simplicity of notation, we consider only the case
when $X$ is a smooth projective variety (i.e., $\bar X=X$).

Let $n$ be a positive integer and let $(E=\bigoplus_{p+q=n}E^{p,q},
\theta)$ be a rank $(n+1)$ system of Hodge bundles of type
$(1,...,1)$.  Then $\theta$ induces injections
$$L_i:=E^{n-i+1, i-1}\otimes (E^{n-i,i})^*\hookrightarrow
  \Omega _X$$
for $i=1,...,n$.

\begin{Lemma}
Assume that $c_1(E)=0$ and $(E, \theta)$ is slope $A$-stable. Let us set
$$N_i:=(n-i+1)\left(L_1+2L_2+...+iL_{i}\right)+i\left((n-i)L_{i+1}+...+2L_{n-1}+L_n\right)$$
for $i=1,...,n$. Then for all non-negative real numbers $x_1,...,x_n$ we have
$$\left( \sum _{i=1}^nx_iN_i\right) ^2\le 0.$$
\end{Lemma}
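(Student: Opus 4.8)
The plan is to mimic the structure of the proof of Proposition \ref{rank-3-structure}, extracting from $A$-stability and the vanishing of $c_1(E)$ and $c_2(E)$ exactly the numerical inequalities needed. First I would record the relation coming from $c_1(E)=0$: writing $e^{p,q}=c_1(E^{p,q})$ and $l_i=c_1(L_i)$, one has $l_i=e^{n-i+1,i-1}-e^{n-i,i}$, and summing gives each $e^{p,q}$ as an explicit $\QQ$-linear combination of $l_1,\dots,l_n$; in fact $-e^{n-i,i}$ is (up to the factor $1/(n+1)$) the $i$-th partial "weighted tail sum'' of the $l_j$, which is precisely how the combinations $N_i$ arise — one checks that $N_i = (n+1)\bigl(-(e^{n,0}+\dots+e^{n-i+1,i-1})\bigr)$, i.e. $N_i$ is a positive multiple of the first Chern class of $E^{n,0}\oplus\dots\oplus E^{n-i+1,i-1}$, the $i$-step piece of the Hodge filtration.

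Next I would invoke the key inputs. Slope $A$-stability of the system of Hodge bundles, applied to the $\theta$-invariant subbundles $F^{n-i+1}E=E^{n,0}\oplus\dots\oplus E^{n-i+1,i-1}$, gives $N_i.A>0$ for each $i$, hence $\bigl(\sum x_i N_i\bigr).A\ge 0$ for non-negative $x_i$, with equality only when all $x_i=0$. On the other hand, the saturation argument of Proposition \ref{rank-3-structure} still applies: the $L_i\hookrightarrow\Omega_X$ all land, where nonzero, in a common saturated line bundle $M\subset\Omega_X$ (integrability forces the local $1$-forms to be proportional), and by Bogomolov's lemma (Lemma \ref{not-big}, restricting to a general complete intersection surface $\bar Y$) $M|_{\bar Y}$ is not big. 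I would then argue, exactly as in the rank-$3$ case, that each $N_i$ is numerically a non-negative multiple of $M$ minus an effective divisor, so $\sum x_i N_i \le c\,M$ numerically for suitable $c\ge 0$; being dominated by the non-big class $M|_{\bar Y}$ and having non-negative degree, Lemma \ref{nefness} forces $\bigl(\sum x_i N_i\bigr)^2\le 0$.

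The cleanest route is probably the latter: show each $N_i \preceq$ (some multiple of) $M$ as divisor classes modulo effective divisors, so that $D:=\sum x_iN_i$ satisfies $D.A\ge 0$, $D$ is dominated by a non-big line bundle on $\bar Y$, hence is not big on $\bar Y$; then Lemma \ref{nefness} gives $D^2\le 0$ directly (if $D^2>0$ and $D.A>0$ then $D|_{\bar Y}$ is big, contradiction; the boundary case $D.A=0$ forces $D\equiv 0$ by Hodge index). The main obstacle I anticipate is the bookkeeping in the first step: verifying that the specific integer coefficients defining $N_i$ really are the positive multiples of $c_1(F^{n-i+1}E)$ coming from $c_1(E)=0$, and then checking that $N_i$, written in terms of $l_1,\dots,l_n$ and the $B$-type effective divisors $b_i$ from $L_i(B_i)=M$, is indeed $\le (\text{const})\,M$ — i.e. that all the "error'' divisors that appear are effective. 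This is routine linear algebra in $NS(\bar X)_\QQ$ but must be done carefully; once it is in place, Lemmas \ref{nefness}, \ref{Zariski-semi-decomposition} and \ref{not-big} do the rest.
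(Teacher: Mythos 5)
Your proposal follows the paper's own proof essentially step for step: integrability forces all the $L_i$ into a common saturated line subsheaf $M\subset\Omega_X$, Bogomolov's lemma makes $M$ non-big on a general complete intersection surface, stability yields $N_i.A>0$, and since each $N_i$ is a non-negative integer combination of the $L_j\le M$ one gets $\sum x_iN_i\le cM$, hence a non-big class of non-negative degree, hence of non-positive self-intersection. The only slip is that the pieces $F^{n-i+1}E=E^{n,0}\oplus\dots\oplus E^{n-i+1,i-1}$ are quotients rather than $\theta$-invariant subsheaves (the $\theta$-invariant subobjects are the complementary tails $E^{n-i,i}\oplus\dots\oplus E^{0,n}$); applying stability to those and using $c_1(E)=0$ gives exactly the same inequality $N_i.A>0$.
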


\begin{proof}
We claim that for all $i=1,...,n-1$ the map
$$\alpha _i : L_i\oplus L_{i+1}\to \Omega_X$$
induced from $\theta$ has rank $1$. Indeed, in the local coordinates
$\alpha$ is given by two $1$-forms $\omega_1$ and $\omega_2$ and the
integrability of $\theta$ implies that $\omega_1\wedge \omega_2=0$, so
they are proportional and $\alpha_i$ has rank $1$. It follows that
the map
$$\alpha  : \bigoplus_{i=1}^n L_{i}\to \Omega_X$$
also has rank $1$. Let $M$ be the saturation of the image of $\alpha$
in $\Omega_X$ (i.e., the largest rank $1$ subsheaf of $\Omega_X$
containing the image of $\alpha$). Then $M$ is a line bundle such that
$\Omega_X/M$ is torsion free.

By construction we have inclusions $L_i \subset M$ defining effective
divisors $B_i$. Let us set $e^{p,q}=c_1(E^{p,q})$, $l_i=c_1(L_i)$ and
$b_i=c_1(B_i)$. Then we can write
$$m=c_1(M)=l_1+b_1=...=l_n+b_n.$$
By assumption we have
$$e^{n,0}+e^{n-1,1}+...+e^{0,n}= 0$$
in the rational cohomology $H^2(X, \QQ)$, so the classes $e^{p,q}$ can
be written in terms of $l_i$ in the following way:
$$ e^{n-i,i} =\frac{1}{n+1} (-l_1-2l_2-...-i l_i+(n-i)l_{i+1}+(n-i-1)l_{i+2}+...+l_n)
$$
for $i=0,...,n$.

Slope $A$-stability of  $(E, \theta)$ implies that
$$(e^{0,n}+e^{1,n-1}+...+e^{i,n-i}). \, A < 0$$
for $i=0,...,n-1$. This is equivalent to
$$(e^{n,0}+e^{n-1,1}+...+e^{i,n-i}). \, A > 0$$
for $i=1,...,n$.
Rewriting these inequalities in terms of $L_i$ we get
$$ N_i.A>0$$
for $i=1,...,n$.
Since $L_i\le M$ we have
$$\sum _{i=1}^n x_iN_i\le \left(\sum _{i=1}^n \frac{i(n-i+1)(n+1)}{2}x_i\right)M.$$
for all non-negative rational numbers $x_1,..., x_n$.  But by Lemma \ref{not-big},
$M|_Y$ is not big and hence
the $\QQ$-divisor $\sum _{i=1}^nx_iN_i$ restricted to $Y$ is also not
big. Therefore we have
$$\left( \sum _{i=1}^nx_iN_i\right) ^2\le 0$$
for all non-negative rational numbers $x_1,..., x_n$ (so also for all non-negative real numbers $x_1,..., x_n$).
\end{proof}

\medskip

This lemma gives many inequalities. Unfortunately, when $n\ge 3$
vanishing of higher Chern classes of $E$ does not seem to give any
particularly useful information.
For example for $n=3$ in dimension $2$ it gives
$$L_1^2+(L_1+2L_2)^2+(L_1+2L_2+3L_3)^2=0.$$
Since
$$N_1=3L_1+2L_2+L_3,$$
$$N_2=2L_1+4L_2+2L_3,$$
$$N_1=L_1+2L_2+3L_3,$$
$E$ is $A$-stable if $L_2.A$ is large and $L_1.A$ and $L_3.A$ are not too negative. In that case $L_1^2$ can be positive and
we do not get an analogue of Proposition \ref{rank-3-structure}.

\section{Corollaries}
\label{sec-cor}

Recall that our variations of Hodge structure are always considered to
be polarizable. We say that a VHS is {\em of weight $k$} if its Hodge
decomposition takes the form $V=\bigoplus _{p+q=k,p,q\geq 0} V^{p,q}$.

\begin{Proposition}
\label{createVHS}
Let $X$ be a smooth complex quasi-projective variety.
Suppose $\rho _K : \pi _1(X,x)\rightarrow \GL (n, K)$ is an absolutely irreducible representation defined over an algebraic number field, such
that for each embedding $\sigma :K\rightarrow \CC$ the associated $\CC$-local system $V_{\sigma}$ underlies a
polarized complex VHS of weight $k$. Then there is a totally imaginary quadratic extension $L$ of a totally real algebraic number field $F$, with
$L$ Galois over $\QQ$, and
a representation $\rho _{L}: \pi _1(X,x)\rightarrow \GL (n, L)$, together with an extension $K'$ containing both $K$ and $L$
such that the extensions of scalars of $\rho _K$ and $\rho _{L}$ to $K'$ are isomorphic (i.e.\ conjugate). Let $V_{L}$ denote the $L$-local system
corresponding to $\rho _{L}$ and let $V_{L/\QQ}$ denote the same local system considered as a local system of $\QQ$-vector spaces
by restriction of scalars. We may arrange things so that $V_{L/\QQ}$ underlies a polarizable $\QQ$-variation of Hodge structure of weight $k$.
\end{Proposition}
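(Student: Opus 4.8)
The plan is to follow the standard construction (as in Deligne's treatment \cite{Del} and the rank $2$ case \cite{CS}) of assembling the Galois conjugates of $\rho_K$ into a single representation whose restriction of scalars carries a $\QQ$-VHS, while keeping careful track of the arithmetic of the field of definition so as to arrange the ``$CM$'' condition needed for the polarization. First I would let $K^{\rm gal}$ be the Galois closure of $K/\QQ$ and consider the representation $\bigoplus_{\sigma} V_\sigma$ over $K^{\rm gal}$, which by construction is Galois-stable and hence, being absolutely semisimple, descends to a $\QQ$-form; concretely this $\QQ$-form is $V_{K/\QQ}$, the restriction of scalars of $V_K$ itself. The issue is that the complex VHS structures on the various $V_\sigma$ need not be compatible with a polarization defined over $\QQ$: a polarization of a weight-$k$ VHS pairs $V^{p,q}$ with $V^{q,p}$, so on each complex place one needs the Hodge filtration and its complex conjugate to be interchanged, and to globalize this over $\QQ$ one needs the field over which everything is defined to be stable under an involution inducing complex conjugation at every infinite place simultaneously — i.e.\ one needs a $CM$ field (a totally imaginary quadratic extension $L$ of a totally real field $F$).

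The key steps, in order, are: (1) Since $\rho_K$ is rigid-type data only through the VHS hypothesis, use that each $V_\sigma$ underlies a complex VHS to produce, for each $\sigma$, a polarization form $Q_\sigma$ on $V_\sigma$; the collection $(Q_\sigma)$ is a priori only defined over $\CC$. (2) Enlarge $K$ to a finite extension so that the Hodge filtration $F^{\bullet}_\sigma$ and the polarization $Q_\sigma$ become defined over that extension for the reference embedding, and then observe that the whole Galois orbit of this data is permuted by $\mathrm{Gal}(\overline{\QQ}/\QQ)$; the field generated by the traces of $\rho$ together with the entries of the polarization and a splitting of the Hodge filtration is a number field $K'$. (3) Identify inside $K'$ (or in a further extension) a $CM$ subfield $L$, Galois over $\QQ$, with maximal totally real subfield $F$, over which the representation can be conjugated — this is possible because the relevant rigidity/integrality forces the traces to lie in a $CM$ field, the standard fact being that a semisimple representation carrying a polarizable VHS at every infinite place has character values in a $CM$ field; one then takes $\rho_L$ to be a model over $L$ of $\rho_K$, which exists after possibly enlarging by the argument of step (2) and using that absolutely irreducible representations are determined up to conjugacy by their characters. (4) Set $V_{L/\QQ}$ to be $\rho_L$ viewed over $\QQ$ by restriction of scalars; its complexification is $\bigoplus_{\tau: L\to\CC} V_\tau$, and the complex conjugation $c\in\mathrm{Gal}(L/F)$ interchanges $V_\tau$ with $V_{\bar\tau}$. (5) Build the $\QQ$-VHS: declare the Hodge filtration on $\bigoplus_\tau V_\tau$ to be $\bigoplus_\tau F^{\bullet}_\tau$, where $F^\bullet_\tau$ is the (unique, by irreducibility) VHS filtration on $V_\tau$ normalized to weight $k$; this is $c$-compatible precisely because $c$ swaps conjugate places, so it descends to a $\QQ$-Hodge structure on $V_{L/\QQ}$ of weight $k$. (6) Produce the polarization: average, or rather use the trace form $\mathrm{tr}_{L/\QQ}(\lambda\, Q_L(\cdot,\cdot))$ for a suitable $\lambda\in L$ purely imaginary over $F$, to manufacture a $\QQ$-bilinear form on $V_{L/\QQ}$ that restricts on each $V_\tau$ to a scalar multiple of $Q_\tau$ with the signs arranged (by choice of $\lambda$) so that the Riemann bilinear relations hold at every infinite place simultaneously; the totally-real/$CM$ structure is exactly what lets one choose $\lambda$ with the correct sign at all places of $F$ at once.

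The main obstacle I expect is step (6) combined with the $CM$ descent in step (3): it is easy to get the Hodge filtration to descend, but getting an honest \emph{polarization} over $\QQ$ requires that the individual polarization signs on the conjugate pieces $V_\tau$ can be simultaneously corrected, and this is only possible after passing to a $CM$ field and choosing the twisting element $\lambda$ correctly — if the field of definition were, say, a non-$CM$ field with a real place, complex conjugation would not act as a single involution interchanging the Hodge pieces and no such $\QQ$-polarization would exist. So the heart of the argument is the claim that the data can be conjugated into a $CM$ field $L$ Galois over $\QQ$; once that is in hand, steps (4)--(5) are formal and step (6) reduces to a finite sign-bookkeeping over the real places of $F$, which succeeds because $F$ totally real and $L/F$ totally imaginary give exactly one degree of freedom (the choice of $\sqrt{-d}$) at each real place, matched against the one sign to be fixed there.
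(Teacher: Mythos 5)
Your outline follows essentially the same route as the paper's proof: use polarizability of each $V_\sigma$ to get a well-defined complex conjugation on the field generated by the traces (so that field is totally real or CM), descend the representation to a totally imaginary quadratic extension $L$ of a totally real field with $L$ Galois over $\QQ$, take restriction of scalars, choose the Hodge filtrations on conjugate factors $V_\tau$, $V_{\overline\tau}$ to be interchanged by complex conjugation, and polarize via ${\rm Tr}_{L/\QQ}$ of a hermitian form whose signs at the real places of $F$ are corrected simultaneously by multiplying by a suitable element of $F^{\ast}$ (using density of $F$ in $F\otimes_\QQ\RR$).

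The one step you pass over too quickly is the descent in your step (3). Knowing that the character takes values in a CM field does not by itself produce a model of $\rho$ over that field: absolutely irreducible representations are determined up to conjugacy by their characters, but \emph{realizability} over the field of traces is obstructed by a Brauer class (Schur index), and the substantive point is that this obstruction can be killed by passing to an extension that is \emph{still} a CM field --- an arbitrary splitting field would ruin the totally-imaginary-over-totally-real structure on which your steps (5) and (6) depend. This is precisely the content of Larsen's lemma \cite[Lemma 4.8]{HBLS}, which the paper invokes at this point. With that input supplied, the remainder of your argument is the paper's.
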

\begin{proof}
This is discussed in  \cite[Section 10]{CS} and \cite[Theorem 5]{HBLS}.
The basic idea goes back at least to Deligne's ``Travaux de Shimura'' \cite{Del}.

The discussion in
the third and following paragraphs of the proof of Theorem 5 in \cite{HBLS} does not use the rigidity hypothesis of that theorem,
but only the statement that for every embedding $\sigma : K\rightarrow \CC$ the induced local system  $V_{\sigma}$ is a polarizable VHS.
One should assume there that $K$ is a Galois extension of $\QQ$ (we can make that hypothesis), and the discussion gives the conclusion
that the subfield $L\subset K$ generated by the traces of monodromy elements, has a uniquely defined complex conjugation operation.

We recall the reason. The field of traces that was denoted by $L$ in \cite{HBLS} will be denoted by $L^{\rm tr}$ here,
and the extension that we are looking for, denoted $L'$ in \cite{HBLS}, will be denoted by $L$ here.

Since $V_{\sigma}$ is polarizable, its complex conjugate is isomorphic to its dual. As we assume $V$ is absolutely irreducible, each $V_{\sigma}$ is irreducible so this isomorphism is unique up to a scalar.

Let $c:\CC \rightarrow \CC$ denote complex
conjugation. It means that $V_{c\circ \sigma} \cong (V_{\sigma})^{\ast}$. Suppose $\gamma \in \pi _1(X,x)$, and let $\rho_K^{\ast}$ denote the
dual representation. Recall that $\rho_K^{\ast}(\gamma ) = (\rho_K (\gamma )^{-1})^t$, so ${\rm Tr}(\rho_K^{\ast}(\gamma ) )
={\rm Tr}(\rho_K (\gamma ^{-1}))$.
From the isomorphism $V_{c\circ \sigma} \cong (V_{\sigma})^{\ast}$ we therefore get
$$
(c\circ \sigma) ({\rm Tr}(\rho _K(\gamma ) ) )= \sigma ({\rm Tr}(\rho _K(\gamma ^{-1}) ) ).
$$
As we are assuming that $K$ is a Galois extension of $\QQ$, the image of $c\circ \sigma$ is equal to the image of $\sigma$ and
it makes sense to write $\sigma ^{-1}\circ c\circ \sigma : K\rightarrow K$ (this notational shortcut was used without explanation in
\cite{HBLS} and required the hypothesis that $K$ be Galois). We obtain
$$
\sigma ^{-1}\circ c\circ \sigma ({\rm Tr}(\rho _K(\gamma ) )) ={\rm Tr}(\rho _K(\gamma ^{-1}) ).
$$
It follows that on the subfield $L^{\rm tr}\subset K$ generated by the traces of monodromy elements, the map
$$
c_{L^{\rm tr}} := \sigma ^{-1}\circ c\circ \sigma :L^{\rm tr}\rightarrow L^{\rm tr}
$$
is well-defined and independent of $\sigma$. In other words, $L^{\rm tr}$ has a well-defined complex conjugation operation.

Therefore, $L^{\rm tr}$ is either
a totally real field, if $c_{L^{\rm tr}}$ is the identity, or a totally imaginary quadratic extension of a totally real field $F:= (L^{\rm tr})^{c_{L^{\rm tr}}}$
otherwise.

Larsen's lemma \cite[Lemma 4.8]{HBLS}
says that we can extend $L^{\rm tr}$ to $L$ (as said before, that was denoted $L'$ there),
a totally imaginary quadratic extension of a totally real field, such that $V$ may be defined over $L$.
The proof uses a consideration of Brauer groups, and we refer to the discussion in \cite[Lemma 4.8]{HBLS}.

We have
$$
V_{L/\QQ} \otimes _{\QQ} \CC = \bigoplus _{\sigma :L\rightarrow \CC} V_{\sigma}.
$$
If $\sigma : L\rightarrow \CC$ then it extends to $\sigma _{K'}:K'\rightarrow \CC$ which then restricts to an embedding
$\sigma _K: K\rightarrow \CC$, and $V_{\sigma} \cong V_{\sigma _K}$. Therefore, our hypothesis that the $V_{\sigma _K}$
are VHS of weight $k$ implies that the $V_{\sigma}$ are VHS of weight $k$.

Our hypothesis says that for any $\sigma$, the Hodge types of $V_{\sigma}$ can be
chosen within $\{ (p,q):\,  p+q=k \mbox{ and }p,q\geq 0\}$.
There could be a choice to be made in assigning the Hodge types, for instance if $V_{\sigma}$ is unitary then it has only a single Hodge type and
that could be put at any $(p,q)$ in the desired set.

As noted in the discussion of \cite[Section 10.2]{CS} we may choose the
Hodge types in such a way that there is a $\QQ$-polarization. We may do the analogue of \cite[Lemma 10.3]{CS} for the
general case as follows.
Let $c_L:L\rightarrow L$ and $c:\CC \rightarrow \CC$ denote the complex conjugation automorphisms.
For $\sigma : L\rightarrow \CC$ put $\overline{\sigma}:= c\circ \sigma = \sigma \circ c_L$.
Choose a collection of embeddings $\sigma _1,\ldots , \sigma _d: L\rightarrow \CC$ such that
the $\overline{\sigma _i}$ are distinct from the $\sigma _j$ and the collection
$$
\{ \sigma _1,\ldots , \sigma _d , \overline{\sigma}_1, \ldots , \overline{\sigma} _d\}
$$
is equal to the full collection of embeddings. For each $i$ choose a structure of VHS for $V_{\sigma _i}$, and then put
$$
V_{\overline{\sigma}_i}^{p,q}:= \overline{V_{\sigma _i}^{q,p}}.
$$
This determines the structures of VHS for $V_{\overline{\sigma}_i}$ in such a way that the property of \cite[Lemma 10.3]{CS}  holds.

As in \cite[Proposition 10.2]{CS}, there is a form
$$
\Phi : V_L\times V_L \rightarrow L
$$
which is $\QQ$-bilinear, and $L$-linear in the first variable and $L$-$c_L$-antilinear in the second variable that is to say
$\Phi (u,\lambda v) = c_L(\lambda )\Phi (u,v)$. Also we may assume that it is $c_L$-$(-1)^k$-symmetric: $\Phi (v,u)= (-1)^kc_L\Phi (u,v)$
by multiplying by a purely imaginary element if necessary to get this sign right.
The form $\Phi$ is unique up to  multiplying by an element of $F^{\ast}$.

Put $\Phi _{\QQ}:= {\rm Tr}_{L/\QQ }\circ \Phi$, giving a $\QQ$-bilinear form
$$
\Phi _{\QQ}:V_{L/\QQ } \times V_{L/\QQ } \rightarrow \QQ
$$
that is $(-1)^k$-symmetric. This will be our polarization form, after having first adjusted $\Phi$ by multiplying by an appropriate
element of $F^{\ast}$ to be chosen below. Let
$$
\Phi _{\CC}:(V_{L/\QQ }\otimes _{\QQ}\CC ) \times (V_{L/\QQ }\otimes _{\QQ}\CC )  \rightarrow \CC
$$
be the induced $\CC$-bilinear form. We would like to say that $\Phi _{\CC}$ pairs $V_{\sigma}$ with
$V_{\overline{\sigma}}$.

To see this, let $v\mapsto z.v$ be the action of $z\in L$ upon $v\in V_{L/\QQ}$.
This extends to an action of $L$ on $V_{L/\QQ}\otimes _{\QQ} \CC$ by $\CC$-linear automorphisms.
Then $V_{\sigma} \subset  V_{L/\QQ}\otimes _{\QQ} \CC$ is the subset of vectors such that
$$
z.v = \sigma (z)v
$$
for $z\in L$.
We have
$$
\Phi (z.u,v)= \Phi (u,  (c_L(z)). v).
$$
Thus, if $u\in V_{\sigma}$ and $v\in V_{\tau}$ then
\begin{equation*}
\begin{split}
\Phi _{\CC} (z.u,v)&=\Phi _{\CC}(\sigma (z)u,v)=\sigma
(z)\Phi _{\CC}(u,v)
= \Phi _{\CC}(z, (c_L(z)).v)
\\ &
=\Phi
_{\CC}(u,\tau (c_L(z))v)
=\tau (c_L(z))\Phi _{\CC}(u,v) = \overline{\tau}(z)\Phi
_{\CC}(u,v).\\
\end{split}
\end{equation*}
Therefore, if $\sigma (z)\neq \overline{\tau} (z)$ then this expression must be zero.
For $\sigma \neq \overline{\tau}$ there is a $z\in L$ with $\sigma (z)\neq \overline{\tau} (z)$ and we obtain $\Phi _{\CC}(u,v)=0$. We conclude that
$\Phi _{\CC}$ pairs $V_{\sigma}$ with $V_{\overline{\sigma}}$.

The complex conjugation on $V_{L/\QQ }\otimes _{\QQ}\CC $ also sends $V_{\sigma}$ to $V_{\overline{\sigma}}$, indeed if
$z.v = \sigma (z)v$ then $z. \overline{v} = \overline{z. v} = \overline{\sigma (z)v} = \overline{\sigma}(z)v$.

The Hodge types on $V_{\CC}:= V_{L/\QQ }\otimes _{\QQ}\CC$ are defined by
$$
V_{\CC}^{p,q}:= \bigoplus _{\sigma} V^{p,q}_{\sigma}.
$$
With our choice of Hodge types satisfying the property of \cite[Lemma 10.3]{CS}, the complex conjugation operation satisfies
$\overline{V_{\CC}^{p,q}} = V_{\CC}^{q,p}$.

We get the hermitian form $(u,v)\mapsto \Phi _{\CC}(u, \overline{v})$ on $V_{\sigma}$ induced by $\Phi _{\QQ}$. As the hermitian forms
on the irreducible local system $V_{\sigma}$ are unique up to multiplication by a real scalar, we deduce that this hermitian form is either a
polarization, or minus a polarization of the complex VHS $V_{\sigma}$. By uniqueness, the hermitian form has a pure Hodge type, and
this type has to be $(k,k)$ otherwise the Hodge types of $V_{\sigma}$ and $V_{\overline{\sigma}}$ could not be symmetric under $(p,q)\mapsto
(q,p)$.

In order for $\Phi _{\QQ}$ to define a polarization to get a $\QQ$-variation of Hodge structure on $V_{L/\QQ }$, we need these forms on $V_{\sigma}$
to define polarizations rather than minus polarizations. This problem of fixing the sign, by multiplying our original form $\Phi$ by an element of
the totally real field $F^{\ast}$, was discussed in \cite[Lemma 10.4]{CS} and we refer there for the proof, which just uses the fact that $F$ is dense
in $F\otimes _{\QQ} \RR$.

Once this adjustment has been made, we obtain a polarization form $\Phi _{\QQ}$, such that with our choice of Hodge decompositions
we obtain a $\QQ$-variation of Hodge structure $V_{L/\QQ }$ of weight $k$.
\end{proof}

{\em Remark:} In the situation of the proposition, notice that the original irreducible local systems $V_{\sigma _K}$
for $\sigma _K : K\rightarrow \CC$, are among the complex direct factors $V_{\sigma}$ of the $\QQ$-variation that was constructed. That was
pointed out during the proof.

\begin{Lemma}
\label{BassSerre}
Suppose in the situation of the previous proposition, that we know furthermore the traces ${\rm Tr}(\rho (\gamma ))$ are algebraic
integers for $\gamma \in \pi _1(X,x)$. Then the $\QQ$-VHS constructed there is a $\ZZ$-VHS.
\end{Lemma}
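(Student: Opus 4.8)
The plan is to produce a $\pi_1(X,x)$-stable full lattice inside $V_{L/\QQ}$ and then check that it is compatible with the Hodge filtration and with a rescaling of the polarization form from Proposition \ref{createVHS}. The only substantial input is the Bass--Serre integrality theorem, used here exactly as in the introduction. First I would observe that $\rho_L:\pi_1(X,x)\to\GL(n,L)$ is absolutely irreducible: it becomes conjugate to $\rho_K$ over $K'$, and $\rho_K$ is absolutely irreducible by hypothesis, so the same holds for $\rho_L$. Moreover $\mathrm{Tr}(\rho_L(\gamma))=\mathrm{Tr}(\rho(\gamma))$ lies in $L$ and, by the new hypothesis, is an algebraic integer; since $L^{\mathrm{tr}}\subseteq L$, it lies in $\cO_L$. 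Because $\pi_1(X,x)$ is finitely generated, Bass--Serre theory then provides a $\pi_1(X,x)$-stable finitely generated $\cO_L$-submodule $\Lambda\subset V_{L,x}$ with $\Lambda\otimes_{\cO_L}L=V_{L,x}$; as $\cO_L$ is a Dedekind domain, $\Lambda$ is projective of rank $n$, and it corresponds to a local system $V_{\cO_L}$ of projective $\cO_L$-modules with $V_{\cO_L}\otimes_{\cO_L}L\cong V_L$.

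Next I would restrict scalars to $\ZZ$. Since $\cO_L$ is free of finite rank $[L:\QQ]$ over $\ZZ$ and $\Lambda$ is finitely generated projective over $\cO_L$, the module $\Lambda$ is finitely generated and torsion-free over $\ZZ$, hence free of rank $n[L:\QQ]$; so $V_{\cO_L/\ZZ}$, the local system $V_{\cO_L}$ regarded over $\ZZ$, is a $\ZZ$-local system with $V_{\cO_L/\ZZ}\otimes_\ZZ\QQ=V_{L/\QQ}$ (using $\cO_L\otimes_\ZZ\QQ=L$). Consequently the $\QQ$-VHS of weight $k$ that Proposition \ref{createVHS} puts on $V_{L/\QQ}$ endows the $\ZZ$-local system $V_{\cO_L/\ZZ}$ with a Hodge filtration of weight $k$; it remains only to arrange an integral polarization.

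For the last step, recall $\Phi_\QQ=\mathrm{Tr}_{L/\QQ}\circ\Phi$ is $\QQ$-valued, so its restriction to $\Lambda\times\Lambda$ takes values in a finitely generated subgroup of $\QQ$, hence in $\tfrac1N\ZZ$ for some positive integer $N$. Replacing $\Phi_\QQ$ by $N\Phi_\QQ$, which is still a polarization of the $\QQ$-VHS, yields a $\ZZ$-valued $(-1)^k$-symmetric form on $V_{\cO_L/\ZZ}$ polarizing the Hodge structure; thus $(V_{\cO_L/\ZZ}, F^{\cdot}, N\Phi_\QQ)$ is a polarized $\ZZ$-VHS of weight $k$. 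I expect the only delicate point to be the invocation of the Bass--Serre theorem, which is precisely where absolute irreducibility (built into the hypotheses of Proposition \ref{createVHS}) is needed; the rest is restriction of scalars and clearing a bounded denominator.
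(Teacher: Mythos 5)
Your proof is correct and follows essentially the same route as the paper: invoke Bass--Serre to obtain a $\pi_1(X,x)$-stable projective $\cO_L$-lattice in $V_L$ and then restrict scalars to $\ZZ$. You additionally spell out the rescaling of $\Phi_\QQ$ to make the polarization integral, a routine step the paper leaves implicit.
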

\begin{proof}
By Bass-Serre theory  \cite{Bass}, there is a projective $\cO _L$-module $P$ such that $V_L = P\otimes _{\cO _L} L$ (speaking here
about the fiber over the basepoint which is a representation of $\pi _1(X,x)$). Then $P$ gives a local system of
free $\ZZ$-modules forming a lattice inside the $\QQ$-local system $V_{L/\QQ}$.
\end{proof}

\begin{Corollary}\label{previous-cor}
  Let $X$ be a smooth complex projective variety.  Suppose $V$ is a
  rank $3$ local system over an algebraic number field $K$ such that:
\begin{enumerate}
\item the monodromy group is Zariski dense in $\SL (3, \overline{K})$;
\item $V$ is integral, in the sense
that ${\rm Tr} (\rho (\gamma ))\in \cO_K$ for any group element
$\gamma$ (here $\rho$ being the monodromy representation), and
\item for any $\sigma : K\rightarrow \CC$ the induced $\CC$-local
  system $V_{\sigma}$ is a VHS.
\end{enumerate}
Then either $V$
  projectively factors through an orbicurve, or else the $V_{\sigma} $
  are direct factors of the monodromy of a family of abelian
  varieties.
\end{Corollary}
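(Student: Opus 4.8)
The plan is to feed the situation into Theorem~\ref{main_thm_on_type_(1,1,1)}, Proposition~\ref{createVHS} and Lemma~\ref{BassSerre}. First I would record a few standing facts. Formation of the Zariski closure of a subgroup commutes with field extension, so hypothesis~(1) gives that for \emph{every} embedding $\sigma:K\rightarrow\CC$ the monodromy of $V_{\sigma}$ is Zariski dense in $\SL(3,\CC)$; in particular $\rho$ is absolutely irreducible, and since $\rho(\pi_1(X,x))$ lies both in its Zariski closure $\SL(3,\overline K)$ and in $\GL(3,K)$, it is valued in $\SL(3,K)$. By hypothesis~(3) each $V_{\sigma}$ underlies a complex VHS which, being irreducible of rank $3$, is of one of the types $(3)$, $(2,1)$, $(1,2)$ or $(1,1,1)$. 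I would then split into two cases according to whether or not some $V_{\sigma}$ is of type $(1,1,1)$.

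Case A: some $V_{\sigma_1}$ is of type $(1,1,1)$. Here I would apply Theorem~\ref{main_thm_on_type_(1,1,1)} to $\rho_{\sigma_1}$. Its first alternative is excluded since the image of $\rho_{\sigma_1}$ is Zariski dense in $\SL(3,\CC)$, so $\rho_{\sigma_1}$ projectively factors through an orbicurve: there is a fibration $f:X\rightarrow C$ onto an orbicurve and a factorization $\pi_1(X,x)\rightarrow\pi_1(C,f(x))\rightarrow\PGL(3,\CC)$. Since $f$ is proper, Theorem~\ref{Xiao} identifies the kernel of $\pi_1(X,x)\rightarrow\pi_1(C,f(x))=\pi_1^{\orb}(C_f,f(x))$ with the image of the fundamental group of a general fibre $F$, so $\bar\rho_{\sigma_1}$ is trivial on $\pi_1(F,x)$. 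Being trivial in $\PGL(3,\CC)$ is the same as $\rho(\pi_1(F,x))$ consisting of scalar matrices of $\GL(3,K)$ (a matrix over $K$ is scalar precisely when its image under the injection $\sigma_1$ is), and this condition is manifestly independent of the field. Hence, again by Theorem~\ref{Xiao}, the projectivization $\bar\rho:\pi_1(X,x)\rightarrow\PGL(3,K)$ factors through $\pi_1(C,f(x))$, i.e.\ $V$ projectively factors through the orbicurve $C$; this is the first conclusion of the corollary.

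Case B: no $V_{\sigma}$ is of type $(1,1,1)$. Then every $V_{\sigma}$ is of type $(3)$, $(2,1)$ or $(1,2)$, and in each of these cases $V_{\sigma}$ underlies a polarized complex VHS of weight one: for types $(2,1)$ and $(1,2)$ this is immediate, while a unitary (type $(3)$) local system has zero Higgs field and so may be placed in the single bidegree $(1,0)$. Thus Proposition~\ref{createVHS} applies with $k=1$ (after replacing $K$ by its Galois closure over $\QQ$, which affects neither the hypotheses nor the conclusion), producing a totally imaginary quadratic field $L$ and a polarizable $\QQ$-VHS of weight one on $V_{L/\QQ}$ whose complexification $V_{L/\QQ}\otimes_{\QQ}\CC=\bigoplus_{\tau:L\rightarrow\CC}V_{\tau}$ contains each of the original $V_{\sigma}$ as a direct summand (the Remark following Proposition~\ref{createVHS}). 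By hypothesis~(2) the traces of $\rho$ are algebraic integers, so Lemma~\ref{BassSerre} upgrades this to a polarizable $\ZZ$-VHS of weight one; such a VHS automatically has $h^{1,0}=h^{0,1}$ and is, as recalled in the Introduction, the local system $R^1g_{\ast}\ZZ$ (up to duality) of a smooth family of abelian varieties $g:A\rightarrow X$. Consequently each $V_{\sigma}$ is a direct factor of $R^1g_{\ast}\CC_A$, the monodromy of a family of abelian varieties, which is the second conclusion.

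\textbf{Main obstacle.} The real content is imported: Theorem~\ref{main_thm_on_type_(1,1,1)} (resting on the birational-geometric analysis of Section~\ref{structure=(1,1,1)}) is what eliminates type $(1,1,1)$ outside the orbicurve case, and Proposition~\ref{createVHS} is what glues the Galois conjugates $V_{\sigma}$ into a single polarized $\QQ$-structure with the Hodge symmetry needed to produce abelian varieties. Within the present corollary the only point that requires care is the descent in Case~A of the statement ``projectively factors through an orbicurve'' from the complex representation $\rho_{\sigma_1}$ back to $V$ over $K$; as indicated, this reduces to the elementary fact that ``the restriction to a general fibre has scalar image'' is insensitive to the embedding $\sigma_1$.
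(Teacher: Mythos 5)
Your proof is correct and follows essentially the same route as the paper's: rule out type $(1,1,1)$ via Theorem~\ref{main_thm_on_type_(1,1,1)} (using Zariski density to exclude its first alternative), observe the remaining types all carry weight-one structures, and then combine Proposition~\ref{createVHS} with Lemma~\ref{BassSerre} to produce the polarized $\ZZ$-VHS of weight one and hence the family of abelian varieties. The only difference is organizational — you split into cases where the paper argues by contraposition — and your Case~A usefully spells out, via Theorem~\ref{Xiao} and the embedding-independence of scalarity on the fibre, the descent step that the paper compresses into the single sentence ``The same holds for each $V_{\sigma}$.''
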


\begin{proof}
Assume that $V$ does not projectively factor through an orbicurve. The same holds for each $V_{\sigma}$.
Then by Theorem \ref{main_thm_on_type_(1,1,1)},
we can conclude that the VHS $V_{\sigma}$ are not of type $(1,1,1)$, because the first case of the theorem is
ruled out by our hypothesis that the monodromy group is Zariski dense.

Therefore, the VHS $V_{\sigma}$
may be chosen to have set of Hodge types contained in $\{ (1,0), (0,1)\}$. Indeed, if $V_{\sigma}$ is unitary we can just choose one of them;
if not, it must have Hodge numbers $(1,2)$ or $(2,1)$ since $(1,1,1)$ is ruled out. In either case we can arrange the Hodge types within
$\{ (1,0), (0,1)\}$. In other words, each $V_{\sigma}$ has a structure of VHS of weight $1$.

We may now apply Proposition \ref{createVHS} to conclude that the $V_{\sigma}$ is a direct factor in the monodromy of a
$\QQ$-VHS of weight $1$ denoted $V_{L/\QQ}$. By Lemma \ref{BassSerre} using our hypothesis (1), it is a polarized $\ZZ$-VHS of weight $1$.
This corresponds to a family of abelian varieties \cite{Del}.
\end{proof}

\begin{Remark}
  Let $V$ be a rank $1$ local system defined over an algebraic number
  field $K$, integral and such that for each embedding $\sigma :
  K\rightarrow \CC$ the associated local system $V_{\sigma}$ is a
  polarizable VHS. Then $V_{\sigma}$ is unitary so Kronecker's theorem
  implies that the monodromies of $V$ are roots of unity. In other
  words $V$ is of finite order (cf. proof of Proposition
  \ref{prop-1-9}).
\end{Remark}

\begin{Lemma}\label{rk-1-rigid}
If for some $n\ge 1$ there exists a rigid representation $\rho: \pi_1(X,x)\to \GL (n, \CC)$  then  the
moduli space of rank $1$ local systems is a torsion abelian group.
In particular,  a rigid local system of rank $1$ is of finite order.
\end{Lemma}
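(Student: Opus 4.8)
The plan is to reduce the statement to the assertion that $b_1(X)=0$, and to deduce that vanishing from the existence of a rigid representation by tensoring with rank $1$ local systems. First I would unwind the terms: the moduli space of rank $1$ local systems on $X$ is the character group $\Hom(\pi_1(X,x),\CC^\ast)=\Hom(H_1(X,\ZZ),\CC^\ast)$, a commutative affine algebraic group of finite type; writing $H_1(X,\ZZ)\cong\ZZ^{b_1(X)}\oplus F$ with $F$ finite, it is isomorphic to $(\CC^\ast)^{b_1(X)}\times\widehat{F}$, so its identity component is the torus $T=(\CC^\ast)^{b_1(X)}$. Hence this group is a torsion abelian group if and only if $T$ is trivial, i.e. if and only if $b_1(X)=0$ (and in that case the group is even finite). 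So it is enough to show that if some $\rho:\pi_1(X,x)\to\GL(n,\CC)$ is rigid, then $b_1(X)=0$.

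The main step exploits the tensoring operation. Since $X$ is projective there is no boundary data, and rigidity of $\rho$ means that $\{[\rho]\}$ is an isolated --- hence Zariski open --- point of the moduli space $M(X,\GL(n,\CC))$. The assignment $L\mapsto\rho\otimes L$ defines a morphism $\phi: T\to M(X,\GL(n,\CC))$ --- it is induced by a $\GL(n,\CC)$-equivariant morphism on representation schemes --- with $\phi(\mathbf 1)=[\rho]$ for $\mathbf 1$ the trivial character. Therefore $\Sigma:=\phi^{-1}([\rho])$ is a nonempty Zariski open subset of $T$. On the other hand, the determinant induces a morphism $M(X,\GL(n,\CC))\to\Hom(\pi_1(X,x),\CC^\ast)$ whose composition with $\phi$ is the map $L\mapsto(\det\rho)\otimes L^{\otimes n}$; since this composition is constant on $\Sigma$, every $L\in\Sigma$ satisfies $L^{\otimes n}=\mathbf 1$. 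Thus $\Sigma$ is contained in the finite $n$-torsion subgroup $T[n]\subset T$. But a nonempty Zariski open subset of the irreducible variety $T$ can be finite only if $\dim T=0$; hence $b_1(X)=0$, and the moduli space of rank $1$ local systems is finite, in particular a torsion abelian group.

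For the last sentence of the statement I would apply what has just been proved with $n=1$ and $\rho$ equal to the given rigid rank $1$ local system: the moduli space of rank $1$ local systems is then torsion, and the rigid system is one of its elements, so it has finite order. The only point needing a little attention is the step combining ``$\Sigma$ is open'' with ``$\Sigma$ lies in a finite set'': this is where irreducibility of the torus $T$ enters, and it is the conceptual core of the argument, the rest being bookkeeping with representation schemes and their moduli.
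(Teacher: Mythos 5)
Your proof is correct and follows essentially the same route as the paper's: both arguments tensor $\rho$ with the identity component of the rank $1$ moduli space and use the determinant to show that only $n$-torsion line bundles can return $[\rho]$, forcing that component to be $0$-dimensional; the paper phrases this as the non-isotriviality of the family $V_\rho\otimes W$ contradicting rigidity, while you phrase it as a nonempty open subset of an irreducible torus being contained in a finite set.
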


\begin{proof}
If the moduli space of rank $1$ local systems has positive dimension then its connected component
of identity is divisible by $n$ and there exists a non-isotrivial family of rank $1$ local systems $W$ containing the trivial system and such that $W^{\otimes n}$ is also a non-isotrivial family. If  $V_{\rho} \otimes W_1\simeq V_{\rho} \otimes W_2$ for some rank $1$ local systems $W_1$ and $W_2$ then the determinants coincide and $W_1^{\otimes n}\simeq W_2^{\otimes n}$. Therefore the family $V_{\rho} \otimes W$ of rank $n$ local systems is non-isotrivial. Since it
contains $V_{\rho}$, we get a contradiction with rigidity of $\rho$.

It follows that $H^1(X,\QQ ) =0$ and the moduli space of rank $1$ local systems is a
torsion abelian group, so for any rigid $\rho$ the system $\det V_{\rho} $ is of finite order.
\end{proof}

\medskip

\begin{Definition}
Let $\rho: \pi_1(X,x)\to \GL (n, \CC)$ be a representation. The \emph{geometric monodromy group}
$M_{\rm geom}(\rho)$ of $\rho$ is the Zariski closure of the image of $\rho$ in $\GL (n, \CC)$. We say that $\rho$
is \emph{properly rigid} if $\rho: \pi_1(X,x)\to M_{\rm geom}(\rho)$ is rigid.
\end{Definition}

Note that any rigid representation is properly rigid. On the other hand, a properly rigid representation might be nonrigid. For example, it could happen that $H^1(X,\QQ )\neq 0$. Therefore, it is more general to consider properly rigid representations.

Given a rigid representation into $\GL (n ,\CC )$, by the previous lemma the identity component $M_{\rm geom}(\rho)^{0}$ of its geometric monodromy group is contained in $\SL (n, \CC)$. In the following we are interested in properly rigid representations with the largest possible geometric monodromy group. More precisely, we are interested in representations $\rho: \pi_1(X,x)\to \GL (n, \CC)$ which satisfy one of the following equivalent conditions:
\begin{enumerate}
\item $M_{\rm geom}(\rho)$ contains  $\SL (n,\CC )$ as a subgroup of finite index,
\item $M_{\rm geom}(\rho)^0=\SL (n,\CC)$,
\item $\mathop{\rm Lie} \, M_{\rm geom}(\rho)=\mathfrak{sl} (n, \CC)$.
\end{enumerate}
If any of  
these
conditions is satisfied then the composition $$\rho: \pi_1(X,x)\mathop{\longrightarrow}^{\rho} \GL (n, \CC)\mathop{\longrightarrow}^{\det} \CC ^*$$
maps onto the group of $m$-th roots of unity $\mu_m$, where $m$ is the index of $M_{\rm geom}(\rho)^0$ in $M_{\rm geom}(\rho)$. Then the surjection $\pi_1(X,x)\rightarrow \mu_m$ gives rise to a degree $m$ \'etale Galois covering $f: Z\to X$ such that $M_{\rm geom}(f^*(\rho))= \SL (n, \CC)$.

\begin{Corollary} \label{rigid+integral+SL}
Let $V$ be an irreducible complex local system  of rank $3$. Assume that the corresponding representation is integral, properly rigid and that the geometric monodromy group of $V$ contains
$\SL (3,\CC )$ as a subgroup of finite index.  Then
either $V$ projectively factors through an orbicurve, or $V$ comes as a direct factor in the monodromy of a family of abelian varieties.
\end{Corollary}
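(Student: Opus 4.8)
The geometric heart of the matter has already been isolated in Corollary~\ref{previous-cor}, so the task here is purely one of assembly: realize $V$ in the arithmetic setting of that corollary, apply it on a suitable cover, and transport the conclusion back to $X$. First I would set up the arithmetic. Since $\rho$ is properly rigid, $\rho\colon\pi_1(X,x)\to M_{\rm geom}(\rho)$ is rigid, so the deformation space $H^1(\pi_1(X,x),\mathfrak{sl}(3,\CC))$ — the Lie algebra of $M_{\rm geom}(\rho)^0=\SL(3,\CC)$ acting through $\rho$ — vanishes; hence the projectivization $\bar\rho\colon\pi_1(X,x)\to\PGL(3,\CC)$ is a rigid $\PGL(3,\CC)$-representation. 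Being rigid, $\bar\rho$ is defined over $\overline{\QQ}$, and having only finitely many Galois conjugates it is defined over a number field, which we may enlarge to be Galois over $\QQ$; by Corlette's theorem (cf.\ the proof of \cite[Theorem~8.1]{CS}) $\bar\rho$ and all of its conjugates underlie projective complex variations of Hodge structure. As recalled before the corollary, $\det\rho$ has finite order; using this we lift $\bar\rho$ to a representation defined over a number field $K$, Galois over $\QQ$, whose $K$-local system we again call $V$, and then each complex conjugate $V_\sigma$ ($\sigma\colon K\hookrightarrow\CC$) underlies a genuine complex VHS. Integrality of the traces is preserved throughout. Finally, the surjection $\det\rho\colon\pi_1(X,x)\to\mu_m$ produces, as explained before the corollary, a finite étale Galois covering $f\colon Z\to X$ with $M_{\rm geom}(f^{*}\rho)=\SL(3,\CC)$; thus $f^{*}V$ is a rank~$3$ $K$-local system on $Z$ with monodromy Zariski dense in $\SL(3,\overline{K})$, still integral, and each $(f^{*}V)_\sigma=f^{*}(V_\sigma)$ underlies a VHS.

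Next I would apply Corollary~\ref{previous-cor} to $f^{*}V$ on $Z$. It gives two alternatives: either $f^{*}V$ projectively factors through an orbicurve, or the $f^{*}(V_\sigma)$ occur as complex direct factors of the monodromy of a family of abelian varieties over $Z$. In the second case, inspection of the proof of Corollary~\ref{previous-cor} — it passes through Proposition~\ref{createVHS}, and by the Remark following that proposition the $V_{\sigma}$ corresponding to embeddings of $K$ are themselves among the direct factors — shows that each $f^{*}(V_\sigma)$ in fact underlies a VHS of \emph{weight one}, i.e.\ a system of Hodge bundles of type $(1,2)$, $(2,1)$ or $(3)$; this is exactly the content of Theorem~\ref{main_thm_on_type_(1,1,1)} combined with Zariski density, which rules out type $(1,1,1)$.

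Finally I would descend to $X$. If $f^{*}V$ projectively factors through an orbicurve, then, $f$ being an alteration, $\rho$ virtually projectively factors through an orbicurve; since $\rho$ is irreducible of rank $3$, it is neither virtually tensor decomposable nor virtually reducible — indeed Lie irreducible, as $M_{\rm geom}(\rho)^0=\SL(3,\CC)\hookrightarrow\GL(3,\CC)$ is irreducible — so by Lemma~\ref{virtually-reducible-is-not-(1,1,1)} and the factorization equivalence Theorem~\ref{factorequiv} ($(2)\Rightarrow(1)$), $\rho$ itself projectively factors through an orbicurve. Otherwise, the type of a system of Hodge bundles is unchanged under pullback along the étale map $f$, so for every $\sigma\colon K\hookrightarrow\CC$ the local system $V_\sigma$ \emph{on $X$} already underlies a VHS of weight one. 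Proposition~\ref{createVHS} (with $k=1$) then produces on $X$ a polarizable $\QQ$-VHS of weight one $V_{L/\QQ}$ one of whose complex direct factors is $V$ (again by the Remark following Proposition~\ref{createVHS}); by Lemma~\ref{BassSerre} and integrality it is a polarized $\ZZ$-VHS of weight one, and since $X$ is smooth this corresponds to a family of abelian varieties over $X$ in whose monodromy $V$ appears as a direct factor (cf.\ \cite{Del}). This yields the two alternatives of the corollary.

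The point requiring the most care is the distinction between ``rigid'' and ``properly rigid'': when $H^1(X,\QQ)\neq0$ the class of $\rho$ need not be isolated in the $\GL(3,\CC)$-character variety, so one cannot directly quote that $\rho$ is defined over a number field and underlies a VHS. This is circumvented by working with the projectivization $\bar\rho$ — where proper rigidity does give honest rigidity, so that $H^1$ with coefficients in $\mathfrak{sl}(3,\CC)$ vanishes — and then re-lifting using the finite order of $\det\rho$. A secondary bookkeeping point is to note, through the cover $f$, that ``weight one'' is a statement about Hodge types and therefore survives both pullback and descent; everything genuinely geometric is already packaged in Corollary~\ref{previous-cor}.
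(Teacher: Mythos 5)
Your proof is correct in substance and shares the paper's skeleton (pass to the finite \'etale cover $f:Z\to X$ killing the determinant character, apply Corollary \ref{previous-cor} there, then descend), but both of your descent steps differ from the paper's. For the orbicurve branch the paper invokes Lemma \ref{biggergroup}, which composes $\rho$ with the adjoint representation $\PGL(3,\CC)\to\SL(8,\CC)$ precisely because Theorem \ref{factorequiv} is stated only for representations into $\SL(n,\CC)$, whereas your $\rho$ lands in a group lying strictly between $\SL(3,\CC)$ and $\GL(3,\CC)$; your direct appeal to Theorem \ref{factorequiv} $(2)\Rightarrow(1)$ therefore needs either that detour or the paper's remark (just before Lemma \ref{biggergroup}) that the proofs of Proposition \ref{redonfiber} and Theorem \ref{factorequiv2} extend to such groups with minor changes. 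For the abelian-variety branch the paper descends the family itself, taking the product of the Galois conjugates of the family over $Z$ to get one defined on $X$, whereas you descend the Hodge-theoretic input: since \'etale pullback preserves the type of the system of Hodge bundles and the VHS structure on an irreducible local system is unique up to translation, ruling out type $(1,1,1)$ for $f^{*}V_\sigma$ rules it out for $V_\sigma$, and you rerun Proposition \ref{createVHS} directly on $X$. Your route is arguably cleaner since it avoids descending families of abelian varieties; both work. Your extra care about ``properly rigid'' versus ``rigid'' is welcome --- the paper's one-line citation of \cite[Lemma 4.5]{HBLS} glosses over this --- though your assertion that proper rigidity forces $H^1(\pi_1(X,x),\mathfrak{sl}(3,\CC))$ to vanish is an overstatement (that is cohomological rigidity, strictly stronger than the isolated-point notion used in this paper); what you actually need, and what does follow, is only that $\bar\rho$ is an isolated point of the $\PGL(3,\CC)$-moduli space.
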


\begin{proof}
Rigidity implies that $V$ is defined over an algebraic number field $K$ and it implies  that for any $\sigma :K\to \CC$ the induced local system $V_{\sigma}$ is a VHS (see \cite[Lemma 4.5]{HBLS}). By the above, we can find a finite
\'etale covering $f: Z\to X$ such that $M_{\rm geom}(f^*(\rho))= \SL (n, \CC)$. Therefore $f^*(\rho)$ satisfies
Hypothesis (1)  of Corollary \ref{previous-cor}. Clearly, $f^*V_{\sigma}$ is also a VHS for any $\sigma$ so
$f^*(\rho)$ satisfies also Hypothesis (3). Hypothesis (2) is provided by our assumption that $V$ is integral.
Hence Corollary \ref{previous-cor} implies that $f^*V$  projectively factors through an orbicurve, or else the $f^*V_{\sigma} $ are direct factors of the monodromy of a family of abelian varieties.

We may descend these conclusions back to $X$. Lemma \ref{biggergroup}
says that if $f^*V$ projectively factors through an orbicurve, then
$V$ does so. If on the other hand $f^*V$ comes from a family of
abelian varieties, then taking the product of the Galois conjugates of the family gives a family which descends back to $X$, and $V$ is a direct factor in the monodromy of this family.
\end{proof}

\begin{Corollary}
If $V$ is any local system of rank $3$ for which the geometric monodromy group contains
$\SL (3,\CC )$ as a subgroup of finite index, then either
it projectively factors through 
an orbicurve, 
or it comes from a family of abelian varieties, or else there is a non-constant equivariant map from the universal cover of $X$ to a two-dimensional building.
\end{Corollary}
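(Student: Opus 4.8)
The plan is to obtain this from Corollary~\ref{rigid+integral+SL} in the rigid integral case and from equivariant harmonic map theory otherwise. Suppose first that $\rho$ is properly rigid and integral; then, every rigid representation being properly rigid, Corollary~\ref{rigid+integral+SL} already gives that $V$ projectively factors through an orbicurve or comes from a family of abelian varieties. So assume henceforth that $\rho$ is not integral, or else is not properly rigid. As in the discussion preceding Corollary~\ref{rigid+integral+SL}, after passing to the finite \'etale Galois cover $f\colon Z\to X$ attached to $\pi_1(X,x)\twoheadrightarrow M_{\mathrm{geom}}(\rho)/M_{\mathrm{geom}}(\rho)^{0}$ we may assume $M_{\mathrm{geom}}(\rho)=\SL(3,\CC)$, so that $\rho$ is Lie irreducible. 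The covering $Z\to X$ induces an isomorphism on universal covers, so a non-constant equivariant map to a building for $Z$ is one for $X$; a projective factorisation through an orbicurve descends by Lemma~\ref{biggergroup}; and a family of abelian varieties descends by replacing it by the product of its Galois conjugates, exactly as in the proof of Corollary~\ref{rigid+integral+SL}. Pullback does not change integrality, and restriction of a positive-dimensional family of representations to the finite-index subgroup $\pi_1(Z,z)$ is again positive-dimensional (an irreducible representation has finite centraliser, so only finitely many twists restrict to a fixed one on $\pi_1(Z,z)$), so we are still in the ``non-integral or non-rigid'' case.

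Next I would produce a non-archimedean local field $\mathbb{K}$ over which (a model of) $\rho$ has unbounded image. If $\rho$ is rigid it is defined over a number field $K$, and integrality of the irreducible representation $\rho$ is equivalent to boundedness of its image at every finite place of $K$; so failure of integrality yields a finite place $v$ of $K$ with $\rho$ unbounded in $\SL(3,K_{v})$, and we put $\mathbb{K}:=K_{v}$. If instead $\rho$ is not rigid, then $[\rho]$ lies on a component $M_{0}$, of positive dimension, of the $\SL(3)$-character variety of $\pi_1(X,x)$; the function field $k(M_{0})$ is generated over $\QQ$ by the trace functions $\gamma\mapsto \mathrm{Tr}(\rho_{\mathrm{gen}}(\gamma))$ of the generic representation, hence has positive transcendence degree over $\QQ$, so some discrete valuation $v$ of $k(M_{0})$ has a pole along one of these trace functions; completing at $v$ gives a non-archimedean local field $\mathbb{K}$ and a representation of $\pi_1(X,x)$ into $\SL(3,\mathbb{K})$ with unbounded image specialising to $\rho$.

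Finally I would apply the Gromov--Schoen theorem in the form used in \cite{CS} for fundamental groups of compact K\"ahler manifolds. The Zariski closure of the image over $\overline{\mathbb{K}}$ still contains $\SL(3)$, hence is not contained in a proper parabolic subgroup of $\SL(3,\mathbb{K})$, and so there is a non-trivial equivariant pluriharmonic map $u$ from the universal cover of $X$ to the Bruhat--Tits building $\mathcal{B}$ of $\SL(3,\mathbb{K})$; this $\mathcal{B}$ is a Euclidean building of dimension $2$. Since the image is unbounded it fixes no point of the complete $\mathrm{CAT}(0)$ space $\mathcal{B}$, so $u$ is non-constant, placing us in the third alternative. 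The main obstacle is the non-rigid case: one must arrange that the non-constant equivariant map is genuinely attached to $\rho$ (or to $\bar\rho$) rather than merely to the generic representation $\rho_{\mathrm{gen}}$ of a positive-dimensional family, which requires a careful specialisation argument using that the map to $\mathcal{B}$ cannot collapse to a constant without the monodromy becoming bounded, contrary to the choice of $v$.
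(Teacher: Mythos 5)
Your proposal follows essentially the same route as the paper: in the properly rigid and integral case you invoke Corollary \ref{rigid+integral+SL}, and otherwise you produce an unbounded representation over a non-archimedean local field and apply Gromov--Schoen to get a non-constant equivariant map to the two-dimensional Bruhat--Tits building of $\SL(3)$. The paper's proof is just the two-line version of this, delegating the entire second case to \cite{GS} and \cite[Section 4]{CS}, so the extra detail you supply (the choice of place or valuation on the character variety, the descent through the finite cover) is exactly what those citations cover.
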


\begin{proof}
  If the representation is non-rigid, or non-integral, then there is a
  non-cons\-tant equivariant map from the universal cover of $X$ to a
  two-dimensional building (see \cite{GS} and \cite[Section 4]{CS}). Otherwise, the
  previous corollary applies.
\end{proof}

Let us recall that by Lemma \ref{rk-1-rigid} a rigid local system $V$ has $\det (V)$ of finite order.

\begin{Corollary}\label{geometric-rk-3}
Suppose $V$ is an irreducible rank $3$ local system. If $V$ is integral and properly rigid with $\det (V)$ of finite order, then it is of geometric origin.
\end{Corollary}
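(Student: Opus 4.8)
The plan is to split into cases according to the geometric monodromy group $M:=M_{\rm geom}(\rho)$, where $\rho$ is the monodromy of $V$. Since $\rho$ is irreducible, $M$ acts irreducibly on $\CC^3$ and is reductive; if moreover $\rho$ is Lie irreducible, then its identity component $M^0$ is semisimple and already acts irreducibly on $\CC^3$. Up to conjugacy, the only connected semisimple subgroups of $\GL(3,\CC)$ acting irreducibly on $\CC^3$ are $\SL(3,\CC)$ and the image of $\SL(2,\CC)$ under the second symmetric power, i.e.\ $\PGL(2,\CC)$ in its $3$-dimensional irreducible representation. Hence there are three cases: \emph{(a)} $M$ contains $\SL(3,\CC)$ with finite index; \emph{(b)} $M^0$ is the symmetric-square image of $\SL(2,\CC)$; \emph{(c)} $\rho$ is not Lie irreducible, equivalently virtually reducible (this subsumes the case $M$ finite). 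We use freely that, being properly rigid, $\rho$ is defined over an algebraic number field and underlies a complex VHS all of whose Galois conjugates are again VHS (Corlette's theorem and \cite[Lemma 4.5]{HBLS}), and that the relevant rigidity is preserved under the finite coverings and finite-order rank $1$ twists appearing below (Lemma \ref{rk-1-rigid}).

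\emph{Case (a).} Corollary \ref{rigid+integral+SL} applies and gives: either the $V_\sigma$ are direct factors of the monodromy of a family of abelian varieties, so $V$ is of geometric origin by definition, or $\rho$ projectively factors through an orbicurve, say $\bar\rho=f^{\ast}\bar\sigma$ for a fibration $f:X\to C$ and $\bar\sigma:\pi_1(C)\to\PGL(3,\CC)$. As $f$ has connected fibres, $\pi_1(C)$ is a quotient of $\pi_1(X)$, so $\bar\sigma$ is irreducible with the same Zariski-dense image in $\PGL(3,\CC)$ as $\bar\rho$, and it is rigid. An irreducible rigid representation with Zariski-dense image in $\PGL(3,\CC)$ cannot exist on an orbicurve whose coarse space has positive genus, nor on a Euclidean or elliptic orbicurve (there the orbifold fundamental group is virtually abelian or finite); so $C$ is a hyperbolic orbicurve of genus $0$, i.e.\ a root stack over $\PP^1$. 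Writing $C^{\rm coarse}=\PP^1$ and letting $S$ be the finite, nonempty set of orbifold points, the obstruction to lifting $\bar\sigma$ --- viewed as a representation of the free group $\pi_1(\PP^1\setminus S)$ --- to $\SL(3,\CC)$ vanishes, yielding $\sigma:\pi_1(\PP^1\setminus S)\to\SL(3,\CC)$ with finite-order local monodromies; and $\sigma$ is rigid with these prescribed conjugacy classes. By Katz's theorem \cite{Ka-rigid}, $\sigma$ is of geometric origin. Put $U:=f^{-1}(\PP^1\setminus S)$, a dense Zariski open subset of $X$. By Proposition \ref{prop-1-9}, $(f|_U)^{\ast}\sigma$ is of geometric origin; it is irreducible, has finite-order determinant, and has the same projectivization as $\rho|_U$, so a further application of Proposition \ref{prop-1-9} shows $\rho|_U$, and hence $V$, is of geometric origin.

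\emph{Case (b).} Here $\bar\rho$ has image in $\PGL(2,\CC)\subset\PGL(3,\CC)$. By Lemma \ref{lifting} (applied to the central isogeny $\SL(2,\CC)\to\PGL(2,\CC)$) there is an alteration $p:Z\to X$ with $Z$ smooth projective on which the induced $\PGL(2,\CC)$-representation lifts to $W:\pi_1(Z)\to\SL(2,\CC)$; then $p^{\ast}V\cong L\otimes\Sym^2 W$ for a rank $1$ local system $L$, and $L^{\otimes 3}\cong\det p^{\ast}V$ is of finite order, whence $L$ is of finite order. The local system $W$ is irreducible (otherwise $\Sym^2 W$, hence $p^{\ast}V$, would be reducible, contradicting that $\rho$ is Lie irreducible); it is integral because $(\Tr W)^2=\Tr\Sym^2 W+1$ takes algebraic-integer values (as $\Sym^2 W$ is a finite-order rank $1$ twist of $p^{\ast}V$); and it is rigid, since any deformation of $W$ would deform $\Sym^2 W=L^{-1}\otimes p^{\ast}V$, contradicting rigidity of $p^{\ast}V$ and of the finite-order system $L$. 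By the rank $2$ case of the Corlette--Simpson theorem \cite{CS}, $W$ is of geometric origin; hence so is $\Sym^2 W$ (a direct factor of $W\otimes W$), hence so is $p^{\ast}V=L\otimes\Sym^2 W$ (a finite-order rank $1$ twist of it), and hence so is $V$ --- all by Proposition \ref{prop-1-9}.

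\emph{Case (c), and the main obstacle.} By Lemma \ref{virtually-reducible-is-not-(1,1,1)}, $\rho$ has image in a finite subgroup or in the normalizer of a maximal torus; in the latter case, underlying a VHS, $\rho$ is unitary. In either situation, after a finite \'etale cover $V$ becomes a direct sum of rank $1$ local systems, each integral --- the eigenvalues of $\rho(\gamma)$ are algebraic integers because $\det V$ is of finite order and the traces of $\rho$ and of $\wedge^2\rho\cong\rho^{\ast}\otimes\det\rho$ are algebraic integers --- and, with all their Galois conjugates, unitary, hence of finite order by Kronecker's theorem. Therefore $\rho$ has finite image, and then $V$ is a direct summand of $h_{\ast}\CC$ for the associated finite \'etale cover $h:X'\to X$, so $V$ is of geometric origin. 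The step I expect to be the main obstacle is the orbicurve branch of case \emph{(a)}: converting ``$\rho$ projectively factors through an orbicurve'' into a usable hypothesis for Katz's theorem requires descending rigidity to the orbicurve, deducing that it is a root stack over $\PP^1$, lifting the projective representation to $\SL(3,\CC)$ over $\PP^1$ minus the orbifold points with finite-order local monodromies, and then transporting geometric origin back along $f$ while absorbing the torsion rank $1$ twist via Proposition \ref{prop-1-9}; the remaining cases reduce fairly directly to Corollary \ref{rigid+integral+SL}, the rank $2$ theorem of \cite{CS}, and Kronecker's theorem.
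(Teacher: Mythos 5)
Your proof follows the same overall strategy as the paper's: a case division according to the geometric monodromy group, with Corollary \ref{rigid+integral+SL} plus the Katz-theorem argument handling the case where the monodromy contains $\SL(3,\CC)$, the rank-$2$ result of \cite{CS} handling the $\SL(2,\CC)$-type case, and a direct argument when the image is finite. Your cases (a) and (c) are correct and essentially identical to the paper's treatment (the paper disposes of the positive-dimensional-torus possibility by observing that such a representation cannot be rigid, whereas you run the rank-$1$ pieces through Kronecker's theorem; both work).

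The genuine gap is in case (b), at the assertion that $W$ is rigid. You argue that a deformation of $W$ would deform $\Sym^2 W = L^{-1}\otimes p^{\ast}V$, ``contradicting rigidity of $p^{\ast}V$.'' But $p:Z\to X$ is an alteration, so $\pi_1(Z,z)$ maps only onto a finite-index subgroup of $\pi_1(X,x)$, and rigidity of $V$ on $X$ does not imply rigidity of $p^{\ast}V$ on $Z$: a nontrivial deformation of $p^{\ast}V$ over $Z$ need not descend to $X$ (already infinitesimally, $H^1(\pi_1(Z),\mathrm{ad}\, p^{\ast}\rho)$ can be strictly larger than $H^1(\pi_1(X),\mathrm{ad}\,\rho)$, since the induced module $\mathrm{ad}\,\rho\otimes\CC[\pi_1(X)/\pi_1(Z)]$ has extra summands). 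So the hypothesis ``rigid'' needed to invoke the rank-$2$ theorem for $W$ is not established. The paper avoids this by never passing to a cover in this case: it applies the results of \cite{CS} --- which treat $\PGL(2,\CC)$- and $\SL(2,\CC)$-valued representations, not only literal rank-$2$ local systems --- directly to the properly rigid representation $\pi_1(X,x)\to M_{\rm geom}(\rho)$ on $X$ itself, where rigidity is a hypothesis. To repair your argument you would either need to do the same, or replace ``rigid'' by the property that the rank-$2$ classification actually uses (integrality together with the fact that all Galois conjugates underlie VHS), which does pull back to $Z$.
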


\begin{proof}
Let us first assume that the geometric monodromy group contains
$\SL \! (3,\CC )$ as a subgroup of finite index.
If $V$ projectively factors through an orbicurve, then
the factoring representation is rigid.
As was shown
in \cite{CS} using Katz's theorem, it follows that
the factoring representation is of geometric origin. Let us
recall briefly the argument here. If an orbicurve admits a rigid representation, then its coarse moduli space is the projective line and there is an open subset of the orbicurve that is
isomorphic to $\PP^1 - \{ t_1,\ldots , t_k\}$. By modifying the monodromy at one point, we may assume that we have an $\SL \! (3,\CC )$-local system on the orbicurve, pulling back to a local system projectively equivalent to our original one. It is still rigid as a local system on the
orbicurve, which implies also that it is rigid as a local
system on $\PP^1 - \{ t_1,\ldots , t_k\}$ with fixed
(semisimple, finite-order) monodromy transformations at the points $t_i$. Therefore,
Katz's theorem \cite{Ka} says that the new local system has geometric origin.  We may now complete the proof of this case:
the last part of Proposition \ref{prop-1-9} shows that our original representation was of geometric origin.

If $V$ does not projectively factor through an orbicurve then
Corollary \ref{rigid+integral+SL} implies
that $V$ is a direct factor in the monodromy of a family
of abelian varieties, so it is of geometric origin.

If the geometric monodromy group is not a group containing
$\SL (3, \CC)$ then it is either finite, isogenous to
$\SL (2, \CC )$, or isogenous to a positive-dimensional torus.
Note however that in the case of a monodromy
group isogenous to a positive-dimensional torus, the representation is not rigid, indeed a rigid representation to a finite extension of a torus must have finite image. A representation with
finite monodromy group has geometric origin, see Proposition \ref{prop-1-9}.
For monodromy groups isogenous to $\SL (2, \CC )$ the result of \cite{CS} shows that rigid integral local systems are of geometric origin.
\end{proof}

{
\noindent
{\em Proof of Theorem \ref{thm-main1}:}
We note that the statement of Theorem \ref{thm-main1} is
a special case of Corollary \ref{geometric-rk-3}. Indeed,
assume the hypotheses of Theorem \ref{thm-main1} hold, then
since any rigid representation is properly rigid, and
a representation into $\SL \! (3,\CC )$ automatically has
determinant of finite order, the hypotheses of
Corollary \ref{geometric-rk-3} hold so the local system
is of geometric origin. \hfill $\Box$
}

\medskip

We leave it to the reader to formulate some other variants on these corollaries. For instance, one could say
that a rigid integral representation of rank $3$ comes (projectively) by pullback from a curve or a  Shimura modular variety.

The hypothesis of rigidity could be replaced by a hypothesis that
the representation is a point of a $0$-dimensional stratum of a stratification by, say, dimensions of cohomology groups or singularities of the moduli space.

We do not know how to show the conjecture ``rigid implies
integral''. One could hope that in some kind of very heuristic sense
the argument for the $(1,1,1)$ case could motivate us to analyze what
happens when we have a map to a $2$-dimensional building.

\begin{Corollary}
Let  $X$ be a smooth projective variety such that for $i=1,2,3$ we have $H^0(X, \Sym ^i\Omega_X)=0$. Then any
representation $\pi_1(X,x)\to \GL (3, \CC)$ is of geometric origin.
\end{Corollary}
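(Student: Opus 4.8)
The plan is to deduce the corollary from Corollary~\ref{geometric-rk-3}, together with the already known rank~$1$ and rank~$2$ cases, once we establish that under the hypothesis \emph{every} semisimple local system of rank~$\le 3$ on $X$ is rigid and integral. By Proposition~\ref{prop-1-9} a representation $\rho:\pi_1(X,x)\to\GL(3,\CC)$ is of geometric origin exactly when each irreducible constituent of its semisimplification is, so it suffices to treat an irreducible local system $V$ with $\rk V\in\{1,2,3\}$.

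The substantive step is rigidity, which comes from the triviality of the Hitchin base. If $(E,\theta)$ is a polystable Higgs bundle of rank $m\le 3$ on $X$ with vanishing rational Chern classes, the coefficients of the characteristic polynomial of $\theta$ are sections of $\Sym^i\Omega_X$ for $1\le i\le m$, hence vanish by hypothesis; thus $\theta$ is nilpotent. In other words the Hitchin map on the moduli space $N$ of such Higgs bundles has target $\bigoplus_{i=1}^m H^0(X,\Sym^i\Omega_X)=0$, and since this map is proper, $N$ is compact. By Simpson's nonabelian Hodge correspondence $N$ is homeomorphic to the moduli space $M$ of semisimple rank~$m$ representations of $\pi_1(X,x)$, which is an affine variety; an affine variety with compact underlying space is finite, so $M$ is a finite set of points. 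Hence every semisimple representation $\pi_1(X,x)\to\GL(m,\CC)$ with $m\le 3$, and in particular every irreducible local system of rank~$\le 3$, is rigid; for $m=1$ this is just the statement $H^0(X,\Omega_X)=0$, which moreover forces all rank~$1$ local systems to have finite order. Integrality of a rigid local system of rank~$\le 3$ on $X$ is then Klingler's theorem~\cite{Kl}, namely the case of Conjecture~\ref{conj-rigid-integral} under the assumption $H^0(X,\Sym^i\Omega_X)=0$ for $i\le 3$.

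It remains to assemble the pieces. A rank~$1$ constituent has finite order, hence is of geometric origin by Proposition~\ref{prop-1-9}; in particular $\det V$ has finite order for every irreducible constituent $V$. A rank~$2$ irreducible constituent is rigid and integral, hence of geometric origin by the rank~$2$ case of Conjecture~\ref{conj-rigid-motivic} proved in \cite{CS}. A rank~$3$ irreducible constituent is integral, properly rigid (any rigid representation is properly rigid) and has determinant of finite order, so Corollary~\ref{geometric-rk-3} shows it is of geometric origin. A final appeal to Proposition~\ref{prop-1-9} recovers the conclusion for $\rho$, understood through its semisimplification as the notion of geometric origin requires. The one genuinely nontrivial ingredient is the rigidity assertion of the second paragraph; the points needing care there are the properness of the Hitchin map over a base of dimension $>1$ and the precise form of Simpson's homeomorphism in that generality, both of which are standard. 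Everything else is bookkeeping with Proposition~\ref{prop-1-9}, \cite{Kl}, \cite{CS} and Corollary~\ref{geometric-rk-3}.
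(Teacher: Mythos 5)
Your proof is correct, and its skeleton is the same as the paper's: establish that the representations in question are rigid and integral, then feed them into Corollary~\ref{geometric-rk-3}. The differences are in how the ingredients are obtained. The paper simply cites \cite[Theorem 1.6 and Corollary 1.8]{Kl} for \emph{both} rigidity and integrality and then invokes Corollary~\ref{geometric-rk-3} in one line; you instead reprove rigidity directly from the vanishing of the Hitchin base $\bigoplus_{i\le m}H^0(X,\Sym^i\Omega_X)$ via properness of the Hitchin map and the affineness of the Betti moduli space (which is essentially the mechanism underlying Klingler's rigidity statement anyway, and both properness and the homeomorphism of moduli spaces are available in arbitrary dimension by Simpson's work), while still relying on \cite{Kl} for the genuinely deep input, integrality. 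You are also more careful than the paper on a point it glosses over: Corollary~\ref{geometric-rk-3} applies only to \emph{irreducible} rank~$3$ systems, so a general representation into $\GL(3,\CC)$ must be reduced to its irreducible constituents via Proposition~\ref{prop-1-9}, with the rank~$1$ constituents handled by finite order (forced by $H^0(X,\Omega_X)=0$) and the rank~$2$ ones by \cite{CS}; your assembly of these cases is exactly what is needed to make the paper's terse argument complete. The only cost of your route is length; what it buys is a self-contained rigidity proof and an explicit treatment of the non-irreducible case.
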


\begin{proof}
If $X$ is a smooth projective variety such that $H^0(X, \Sym ^i\Omega_X)=0$ for $i=1,2,3$,
then any representation $\pi_1(X,x)\to \GL (3, \CC)$ is rigid and integral (see \cite[Theorem 1.6 and Corollary 1.8]{Kl}).
Corollary \ref{geometric-rk-3} implies that it is of geometric origin.
\end{proof}

\medskip

Interesting examples of such varieties come from \cite[Theorem
1.11]{Kl}. Namely we can take the ball quotient $\Gamma
\backslash {\mathbf B^n_{\CC}}$ for a torsion free Kottwitz
lattice $\Gamma \subset SU (n,1)$ for $n\ge 4$ such that $n+1$ is
prime. But these varieties are simple Shimura varieties, so
probably for such varieties our result follows easily in a
different way.

Our results give geometric origin for
two-dimensional ball quotients
{(that is to say, any smooth projective surfaces
of general type with $c_1^2=3c_2$)}, provided we know integrality:

\begin{Corollary}
\label{ballquot}
Suppose $\Gamma \subset PU(2,1)$ is a torsion-free
cocompact lattice
that is integral, i.e.\ such that the traces of its elements are algebraic integers. Let $X=B^2/\Gamma$ be the ball quotient which is a smooth projective variety. Then the standard representation of $\pi _1(X)=\Gamma$ is of geometric origin.
\end{Corollary}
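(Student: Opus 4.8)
The plan is to deduce this from Corollary \ref{geometric-rk-3}. Since the ball $B^2$ is contractible, $X=B^2/\Gamma$ is a smooth projective surface with $\pi_1(X,x)=\Gamma$, and the standard representation is a homomorphism $\rho:\pi_1(X,x)=\Gamma\to SU(2,1)\subset \SL(3,\CC)$; note that the integrality hypothesis, being phrased in terms of traces of elements of $\Gamma$, already presupposes such a matrix realisation. (If instead one wishes to start only from the tautological projective representation $\Gamma\hookrightarrow PU(2,1)\subset \PGL(3,\CC)$, one applies Lemma \ref{lifting} to the central isogeny $\SL(3,\CC)\to \PGL(3,\CC)$ to lift it to $\SL(3,\CC)$ after a finite \'etale base change; see the caveat at the end.) In any case $\det\rho$ is trivial, hence of finite order, so it remains only to check that $\rho$ is irreducible of rank $3$, integral, and properly rigid.

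First I would determine the geometric monodromy group. By the Borel density theorem the cocompact lattice $\Gamma$ is Zariski-dense in $PU(2,1)$, equivalently $\rho(\Gamma)$ is Zariski-dense in $SU(2,1)$; and $SU(2,1)$, being a real form of $\SL(3,\CC)$, is itself Zariski-dense in $\SL(3,\CC)$. Hence $M_{\rm geom}(\rho)=\SL(3,\CC)$; in particular $\rho$ is irreducible, $\mathop{\rm Lie}\, M_{\rm geom}(\rho)=\mathfrak{sl}(3,\CC)$, and the equivalent conditions stated just before Corollary \ref{rigid+integral+SL} are satisfied. Next I would establish proper rigidity from classical local rigidity: by the theorem of Calabi--Vesentini and Weil, a cocompact lattice in $PU(n,1)$ with $n\ge 2$ is infinitesimally rigid, so with $n=2$ we get $H^1(\Gamma,\mathfrak{su}(2,1))=0$ for the adjoint action, and extension of scalars to $\CC$ gives $H^1(\Gamma,\mathfrak{sl}(3,\CC))=0$ for the adjoint action via $\rho$. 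Because $X$ is projective there are no boundary conjugacy-class constraints, and because $\mathop{\rm Lie}\, M_{\rm geom}(\rho)=\mathfrak{sl}(3,\CC)$, this vanishing means precisely that $\rho:\pi_1(X,x)\to M_{\rm geom}(\rho)$ is an isolated point of its moduli space, i.e. $\rho$ is properly rigid. (It is in fact rigid, but proper rigidity is all that is needed.)

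Having verified irreducibility, integrality (the hypothesis), proper rigidity, and finiteness of $\det\rho$, I would invoke Corollary \ref{geometric-rk-3} to conclude that $\rho$ is of geometric origin --- either $\rho$ projectively factors through an orbicurve, and then the geometric origin comes from that curve via Katz's theorem, or $\rho$ occurs as a direct factor in the monodromy of a family of abelian varieties; in both cases we are done. The only point that needs real care is the passage between $PU(2,1)$ and $SU(2,1)$: the obstruction to lifting the tautological projective representation to $\SL(3,\CC)$ over $X$ lies in $H^2(X,\ZZ/3)$, which need not be trivial, so one should either take as part of the input the Hermitian (arithmetic) realisation that provides a lift on $X$ itself --- which is anyway what ``the traces of its elements are algebraic integers'' refers to --- or work on the finite \'etale cover supplied by Lemma \ref{lifting}, which changes nothing at the level of the projective representation. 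Apart from this bookkeeping, everything is a routine check of the hypotheses of Corollary \ref{geometric-rk-3}, using only Borel density and Calabi--Vesentini--Weil rigidity.
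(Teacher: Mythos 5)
Your proposal reaches the right conclusion and its external inputs (Weil--Calabi--Vesentini rigidity, Borel density, Lemma \ref{lifting}) match the paper's, but it is routed differently. The paper does not invoke Corollary \ref{geometric-rk-3}; it applies Corollary \ref{previous-cor} directly to the lift $\rho_Z$ on the cover $Z$ supplied by Lemma \ref{lifting}, and --- this is the one idea your write-up misses --- it rules out the orbicurve alternative outright by observing that $\rho_Z$ is a VHS whose period map is $\widetilde{Z}\to\widetilde{X}\cong B^2$, hence has rank-$2$ differential at a general point and cannot factor through a curve. That observation buys two things: the sharper conclusion that the representation actually comes from a family of abelian varieties (no appeal to Katz's theorem is needed), and, more importantly for the logic, it lets the paper use a corollary whose hypotheses are stable under pullback. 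This matters because of a soft spot in your fallback route: Lemma \ref{lifting} produces a finite surjective, possibly \emph{ramified} covering (an alteration), not a finite \'etale one as you assert, and proper rigidity of $p^{\ast}\rho$ on such a $Z$ does not follow formally from rigidity of $\rho$ on $X$ (the cohomology $H^1(\pi_1(Z),\mathrm{ad}\,\rho_Z)$ can a priori be larger than $H^1(\Gamma,\mathfrak{sl}(3,\CC))$ when $\pi_1(Z)\to\pi_1(X)$ is not injective). The paper sidesteps this entirely: Corollary \ref{previous-cor} only needs that every Galois conjugate $V_{\sigma}$ is a VHS, a property established on $X$ via cohomological rigidity and inherited by any pullback. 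So either commit to your primary reading (the integral Hermitian realisation gives a lift on $X$ itself, and then your application of Corollary \ref{geometric-rk-3} on $X$ is clean), or, if you work on the cover, replace the rigidity verification there by the paper's argument through Corollary \ref{previous-cor} and the rank-$2$ period map.
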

\begin{proof}
The standard representation $\rho$ is {cohomologically rigid
(see below)},
by \cite{Weil}, so all of its conjugates $\rho ^{\sigma}$ for
$\sigma \in {\rm Gal}(\CC / \QQ )$ are
{cohomologically}
rigid, hence they are variations
of Hodge structure. There exists a finite possibly ramified
covering $Z\rightarrow X$ such that $\rho _Z=\rho |_{\pi _1(Z)}$
lifts to $SU(2,1)$ (Lemma \ref{lifting}), in other words it is
a rank $3$ local system. It is clearly Zariski-dense. Furthermore, $\rho _Z$ is a variation of Hodge structure where the period map is the composition
$$
\widetilde{Z}\rightarrow \widetilde{X}\cong B^2,
$$
in particular the differential of this period map at a general point has rank $2$. Therefore, $\rho _Z$ cannot factor through an orbicurve.
Now apply Corollary \ref{previous-cor} to get that $\rho _Z$ has geometric origin, and by Proposition \ref{prop-1-9} $\rho$ is of geometric origin (meaning that its composition with any linear representation of $PU(2,1)$ has geometric origin).
\end{proof}

\begin{Remark}[Added in revision]
\label{rem-eg}
A vector bundle $E$ with an integrable connection $\nabla$ on $X/\CC$ is called \emph{cohomologically rigid} if
$H^1(X, \mathop{\mathcal End} ^0 (E, \nabla))=0$. A local system  is  \emph{cohomologically rigid}  if the corresponding
vector bundle with an integrable connection is cohomologically rigid.
Esnault and Groechenig prove that
a cohomologically rigid
local system is integral \cite{EG}. This means that
in many of our statements above we can remove the integrality
hypothesis if we assume cohomological rigidity.

An important special case is that this allows us to remove
the hypothesis of integrality in Corollary \ref{ballquot}.
Indeed for two-dimensional ball quotients the standard representation
$\Gamma \rightarrow PU(2,1)$ is cohomologically rigid
by \cite{Weil}, therefore \cite{EG} applies to provide the hypothesis of integrality needed to apply Corollary \ref{ballquot}. We conclude that the standard representation
is of geometric origin, for any smooth projective surface
of general type with $c_1^2=3c_2$.
\end{Remark}

\section{Appendix.  The factorization theorem}

The main aim of this appendix is to prove Theorem \ref{factorequiv}.
Before giving proof of this theorem, we need a few auxiliary results.
The first one is a strong version of Proposition \ref{weak-general}
and Remark \ref{weak-general-fibration} in case of representations
into $\SL (n, \CC)$.

\begin{Proposition}
\label{redonfiber}
Let $X$ be a smooth complex quasi-projective variety with an
irreducible representation $\rho :\pi_1(X, x)\to \SL (n, \CC)$.
Suppose $f:X\rightarrow C$ is a fibration 
(cf. \S \ref{sec-notation})
over an orbicurve such
that for a general fiber $F=f^{-1}(c)$, the restriction of $\rho$ to
$F$ is reducible. Then one of the following holds:
\begin{enumerate}

\item $\rho$ projectively factors through $f$.

\item There exists a finite \'etale cover $C_Z\to C$, which by base change induces a finite \'etale cover $p:Z\to X$ such that $p^*\rho$
  is tensor decomposable. In particular, $\rho$ is virtually tensor
  decomposable.

\item There exists a finite \'etale cover $C_Z\to C$, which induces a
  finite \'etale cover $p:Z\to X$ such that $p^*\rho$ is reducible. In
  particular, $\rho$ is virtually reducible.

\end{enumerate}
\end{Proposition}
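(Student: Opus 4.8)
The plan is to combine the homotopy exact sequence with Clifford theory for the normal subgroup carried by a fiber. Write $\Gamma=\pi_1(X,x)$, $V=V_\rho$, and let $N\subseteq\Gamma$ be the image of $\pi_1(F,x)\to\pi_1(X,x)$ for a general fiber $F=f^{-1}(c)$. By Theorem~\ref{Xiao}, $N$ is normal in $\Gamma$ with $\Gamma/N\cong\pi_1(C)$ (the orbifold fundamental group of $C$), and $N$ has the same invariant subspaces in $V$ as $\pi_1(F)$, so $\rho|_N$ is reducible. Decompose $V|_N=\bigoplus_{i=1}^{k}V_i\otimes W_i$ into $N$-isotypical components, with $V_i$ pairwise non-isomorphic irreducible $N$-subrepresentations and $W_i$ the multiplicity spaces. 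Since $N\triangleleft\Gamma$, the group $\Gamma$ permutes $\{V_1,\dots,V_k\}$, and transitively: otherwise the sum over one orbit would be a proper nonzero $\Gamma$-subrepresentation of the irreducible $V$. In particular all $V_i$ have a common dimension $d$ and all $W_i$ a common dimension $e$, with $de=n$.

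\emph{The easy cases.} If $k\ge 2$, the stabilizer $\Gamma_1\subseteq\Gamma$ of $[V_1]$ has index $k$ and contains $N$, hence is the preimage of a finite-index subgroup of $\pi_1(C)$; the associated connected étale cover $C_Z\to C$ of orbicurves induces, by base change along $f$, a finite étale cover $p:Z=X\times_C C_Z\to X$ — here one uses the standard fact underlying Theorem~\ref{Xiao} that subgroups of $\Gamma$ containing $N$ correspond to covers of $X$ pulled back from covers of $C$. Since $V_1\otimes W_1\subset V$ is $\Gamma_1$-invariant of dimension $n/k<n$, the pullback $p^\ast\rho=\rho|_{\Gamma_1}$ is reducible: this is conclusion~(3). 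Now suppose $k=1$, so $V|_N=V_1\otimes W_1$. If $d=n$ then $\rho|_N=V_1$ is irreducible, contradicting the hypothesis, so $d<n$. If $d=1$ then $\rho(N)$ consists of scalar matrices, so $\bar\rho$ factors through $\Gamma/N=\pi_1(C)$, which is exactly the diagram of Definition~\ref{factordef}(1) for the fibration $f$, i.e.\ conclusion~(1).

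\emph{The main case $2\le d<n$} (so also $e=n/d\ge 2$). By Schur's lemma the commutant of $\rho(N)$ in $\GL(V)$ is $\mathrm{id}_{V_1}\otimes\GL(W_1)$; as $\rho(\Gamma)$ normalizes $\rho(N)$, it respects the factorization $V=V_1\otimes W_1$. Concretely, for each $\gamma$ choose an intertwiner $a_\gamma\in\SL(V_1)$ between $V_1$ and its $\gamma$-conjugate (unique up to $\mu_d$); then $\rho(\gamma)\circ(a_\gamma^{-1}\otimes\mathrm{id})$ commutes with $N$, hence equals $\mathrm{id}\otimes b_\gamma$ for a unique $b_\gamma\in\GL(W_1)$. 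Thus $\gamma\mapsto a_\gamma$ and $\gamma\mapsto b_\gamma$ are projective representations with a common obstruction cocycle $c\in Z^2(\Gamma,\mu_d)$, and $\rho=a\otimes b$ projectively. On $N$ one may take $a_n$ to be the given action on $V_1$ and $b_n=\mathrm{id}$, so $c|_N$ is a coboundary. It therefore suffices to produce a finite-index subgroup $Q'\subseteq\pi_1(C)$ whose preimage $\Gamma'\supseteq N$ kills $[c]$: over the corresponding finite étale cover $p:Z=X\times_C C_Z\to X$ we then get $p^\ast\rho\cong a|_{\Gamma'}\otimes b|_{\Gamma'}$ honestly, with tensor factors of ranks $d,e\ge 2$, which is conclusion~(2).

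\emph{The main obstacle: descending $[c]$ to a cover coming from $C$.} This is the only step with genuine content; everything above is routine Clifford theory. Note that Lemma~\ref{lifting} only furnishes a (possibly ramified) alteration of $X$ on which $a$ lifts, which is not enough: we need an \emph{étale} cover that is moreover pulled back \emph{from $C$}, so that it is compatible with $f$. The plan is to run the inflation–restriction (seven-term) exact sequence for $1\to N\to\Gamma\to Q\to 1$, $Q=\pi_1(C)$, with coefficients $\mu_d$. Since $c|_N=0$, the class $[c]$ restricts trivially to $N$, so its image in $H^1\!\big(Q,\Hom(N,\mu_d)\big)$ is the only remaining obstruction to $[c]$ being inflated from $H^2(Q,\mu_d)$; because $F$ is a quasi-projective variety, $\pi_1(F)$ and hence $N^{\mathrm{ab}}$ is finitely generated, so $\Hom(N,\mu_d)=\Hom(N^{\mathrm{ab}},\mu_d)$ is \emph{finite}, and passing first to the finite-index subgroup of $Q$ acting trivially on this module and then to the kernel of the resulting homomorphism kills this part on a finite orbifold cover of $C$. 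After that replacement $[c]$ is inflated from $H^2(Q,\mu_d)$, and one finishes by killing a finite-order cohomology class on an orbicurve group: pass to a torsion-free finite-index subgroup to remove the contribution of the cone points, and use that restriction to an index-$m$ subgroup multiplies the surviving (closed-)surface-group $H^2$ by $m$. Assembling the covers of $C$ used and base-changing along $f$ produces the desired $p:Z\to X$, giving conclusion~(2).
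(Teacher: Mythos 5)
Your overall architecture coincides with the paper's: decompose $V_\rho|_F$ into isotypical components for the normal subgroup $N=\mathrm{im}(\pi_1(F,x)\to\pi_1(X,x))$; if there are several components, stabilize them on a finite \'etale cover pulled back from $C$ to get conclusion (3); in the single-component case $V|_N=V_1\otimes W_1$, produce a projective tensor factorization by Clifford theory and kill its obstruction class on a cover pulled back from $C$, using the Lyndon--Hochschild--Serre spectral sequence for $1\to N\to\pi_1(X)\to\pi_1(C)\to 1$, finiteness of $\Hom(N,\mu)$, and the degree argument on $H^2$ of a curve. This is exactly what the paper does, so the strategy is sound and the ``easy cases'' are handled correctly.

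There is, however, one step that is not justified as written and on which the entire descent hinges: the claim that $c|_N$ is a coboundary in $Z^2(N,\mu_d)$. With your normalization $a_\gamma\in\SL(V_1)$ you may not take $a_n$ to be ``the given action'' $\varphi(n)$, since $\varphi(n)\in\GL(V_1)$ need not have determinant $1$: the constraint $\det\rho=1$ only gives $(\det\varphi(n))^{e}=1$ with $e=\dim W_1$. Rescaling $a_n=\lambda_n\varphi(n)$ into $\SL(V_1)$ yields the cocycle $\lambda_n\lambda_{n'}\lambda_{nn'}^{-1}$, which is the coboundary of a $\CC^{*}$-valued cochain but not necessarily of a $\mu_d$-valued one; the resulting class $[c]|_N\in H^2(N,\mu_d)$ is the Bockstein image of $\det\varphi\in\Hom(N,\CC^{*})$, i.e.\ the obstruction to lifting $\bar\varphi:N\to\PGL(d,\CC)$ to $\SL(d,\CC)$, and it can be nonzero. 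This is fatal for the argument as stated, because every cover you are permitted to use contains $N$, so if $[c]|_N\neq 0$ no cover pulled back from $C$ can kill $[c]$ and the inflation--restriction mechanism never starts. The repair is precisely the paper's normalization: work in $G=\{A\in\GL(d,\CC):(\det A)^{e}=1\}$, which contains $\varphi(N)$ on the nose and is a central extension of $\PGL(d,\CC)$ by $\mu_{de}=\mu_n$; choosing $a_n=\varphi(n)$ makes the cocycle $\mu_n$-valued and identically $1$ on $N$, after which your spectral-sequence argument goes through with $\mu_n$ in place of $\mu_d$. (The paper verifies the vanishing on $N$ by a slightly different route --- extending $\varphi$ over $f^{-1}(C\setminus\{c'\})$ using freeness of $\pi_1(C\setminus\{c'\})$ and van Kampen --- but the content is the same.)
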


\begin{proof}
  Some arguments in the following proof are similar to the one used,
  e.g. in \cite[p.~92--93]{Ka}.

Let $V_{\rho}$ denote the local system associated to $\rho$.
Write the isotypical decomposition
$$
V_{\rho}|_F = \bigoplus V_i \otimes W_i ,
$$
where $V_i$ are distinct irreducible representations of $\pi _1(F,x)$
and $W_i$ are vector spaces. By hypothesis, either there is more than
one factor, or there is a single factor but $W_1$ has dimension $>1$.

Use the homotopy exact sequence (Theorem \ref{Xiao}), assuming that
$C$ has the maximal orbifold structure such that $f$ is a morphism,
and letting $x\in F$.  Let $\Phi$ denote the image of $\pi _1(F,x)$ in
$\pi _1(X,x)$, which is also the kernel of $\pi _1(X,x)\rightarrow \pi
_1(C,c)$.  Then $\pi _1(C,c)$ acts by outer automorphisms on $\Phi$ {
  (more precisely, we have a homomorphism from $\pi _1(C,c)$ to the
  group of outer automorphisms of $\Phi$)}. The above isotypical
decomposition is a decomposition of the restriction of $\rho$ to
$\Phi$, and inner automorphisms of $\Phi$ preserve the isomorphism
type of the $V_i$. Therefore, $\pi _1(C,c)$ acts on the set of
isotypical components $\{ V_i\}$ by its conjugation action.

Assume that there is more than one different isotypical component. As there are only finitely many isotypical components, there exists a subgroup of finite index in $\pi_1(C, c)$ that stabilizes each isotypical component.  Therefore there exists a finite \'etale covering $p:Z\rightarrow X$  obtained by base change from a finite \'etale
covering $C_Z\to C$ defined by the above subgroup, such that when
we pull back the above picture to $Z$, the isotypical components are
preserved by the action of $\pi _1(C_Z,c')$.  Then the decomposition
is preserved by the full monodromy action of $\pi _1(Z,z)$. So
$V_{\rho}|_Z$ becomes reducible, and in that case $\rho$ is virtually
reducible.

We may therefore now suppose that there is only a single isotypical
component in the above decomposition, that is to say
$$
V_{\rho}|_F = V_1 \otimes W_1,
$$
where $n_1=\rk V_1\ge 1$ and $m_1=\dim W_1\ge 2$.

\medskip

{

\begin{Lemma}
  There exists a finite \'etale covering $C_Z\to C$ of the
  orbicurve
  $C$ such that for the induced \'etale covering $p:Z\rightarrow X$
  inducing $p_F:F_Z\rightarrow F$, the local system $p_F^{\ast}(V_1)$
  extends to a local system $V'_1$ on $Z$.
\end{Lemma}

\begin{proof}
  Let $\Phi\subset\pi_1(X, x)$ be the image of $\pi_1(F, x)$ and let
  $\varphi:{\Phi} \to \GL (n_1, \CC)$ be the irreducible
  representation corresponding to $V_1$.  The homomorphism $\varphi$
  factors through the almost-simple subgroup $G:=\{A\in \GL (n_1,
  \CC): \, (\det A)^{m_1}=1 \}$.  For each element $g \in \pi_1(X,x)$
  we can consider the representation $\varphi _{g}: \Phi \to G\subset
  \GL (n_1,\CC)$ given by $\varphi _g (h)=\varphi (ghg^{-1})$ and the
  corresponding $\Phi$-module $V_{\varphi_g}$. By definition,
  $V_{\varphi}\otimes W\simeq V_{\varphi_g}\otimes W$ and
  $V_{\varphi}$ is a simple $\Phi$-module.  Therefore there exists a
  non-zero map from one of the simple factors in the Jordan--H\"older
  filtration of $V_{\varphi_g}$ to $V_{\varphi}$. By Schur's lemma,
  this factor is isomorphic to $V_{\varphi}$. But $V_{\varphi}$ and
  $V_{\varphi_g}$ have the same dimension over $\CC$, so
  $V_{\varphi_g}$ is simple and the $\Phi$-modules $V_{\varphi}$ and
  $V_{\varphi_g}$ are isomorphic.  Therefore for each fixed $g\in
  \pi_1(X,x)$, we can choose $A_g$ in $G\subset \GL (n_1,\CC)$ such
  that
$$\varphi (ghg^{-1}) = A_g\varphi (h)A_g^{-1}$$
for all $h\in \Phi$.  The map $\bar\tau:\pi_1(X,x)\to \PGL (n_1, \CC)$
defined by sending $g$ to the class of $A_g$, is a projective
representation extending the projective representation $\pi_1(F, x)\to
\PGL (n_1, \CC)$ associated to $V_1$.

{We distinguish two cases. By \cite[Proposition 5.5]{BN} (see also
  \cite[2.2]{CS}), either the orbicurve is spherical, or it has a
  finite \'etale covering that is a usual curve with infinite
  fundamental group.

In the spherical case, there is a finite \'etale covering of $C$ whose
fundamental group is trivial.  After pulling back to this covering we
are in the situation where $\Phi = \pi _1(X,x)$ and the claim follows.

We may therefore assume that there is a finite \'etale covering of $C$
that is a usual curve not equal to $\PP^1$ or $\AA ^1$. Pulling
everything back to this covering, we may assume that $C$ has no
orbifold structure.}

We can choose an analytic neighbourhood $U\subset C$ of $c=f(x)$ isomorphic to a disk
and such that the induced fibration $f^{-1}(U)\to U$ is trivial in the usual
topology (cf. \cite[Lemma 1.5 A]{No}).  If $C$ is compact then we
choose a point $c'\in U-\{c\}$. The fiber $F'$ of $f$ over $c'$ is
smooth and homeomorphic to $F$. Then we define the Zariski open subcurve
$C^*=C-\{c'\}\subset C$.  In case $C$ is non-complete we set
$C^*=C$. Let us also set $X^*=f^{-1}(C^*)$.  By construction, the
fundamental group $\pi_1(C^*, c)$ is free.

Now for some elements $g_1,...g_s\in \pi_1(X^*,x)$ whose images are
free generators of $\pi_1(C^*,c)$, we can consider the
semidirect product
$$\Phi\ltimes \langle g_1,...,g_s\rangle $$
with $g_i$ acting on $\Phi$ by conjugation. This group is isomorphic to $\pi_1(X^*,x)$.
Then we can extend
$\varphi$ to a representation $\tilde \varphi: \pi_1(X^*,x)\to G$
by setting
$$\tilde \varphi (hg_1^{a_1}...g_s^{a_s})=\varphi (h) A_{g_1}^{a_1}...A_{g_s}^{a_s}.$$
The short exact sequence
$$1\to \mu_l\to G\to  \PGL(n_1,\CC)\to 1,$$
where $l=n_1m_1$, leads to  an exact sequence of (non-abelian) group cohomology
$$ H^1(\pi_1(X); G )\to  H^1(\pi_1(X); \PGL (n_1, \CC) ) \mathop{\to}^{\delta} H^2 (\pi_1(X); \mu _l),$$
where the map $\delta$ sends the 1-cocyle $[\bar \tau]=\{A_g\}_{g\in G}$ to the 2-cocycle
$[ \, \,\,]: \pi_1(X)\times\pi_1(X) \to \mu_l$
given by
$$[g,h]=A_g A_hA_{gh}^{-1} .$$
The sequence above shows that this 2-cocycle is the obstruction to lifting $\bar \tau$ to a
representation $\pi_1(X,x)\to G$.

Let $\Phi ^*\subset \pi_1(X^*,x)$ be the image of $\pi_1(F, x)$. We
claim that the canonical surjection $\Phi ^*\to \Phi$ is in fact an
isomorphism. By van Kampen's theorem $\pi_1(X,x)$ is isomorphic to the
amalgamated product of $\pi_1(X^*,x)$ and $\pi_1(f^{-1}(U),x)\simeq
\pi_1(F,x)$ over $\pi_1(X^*\cap f^{-1}(U),x)\simeq \pi_1 (F,x)\times \ZZ$.
This implies that  $\Phi ^*\simeq \Phi$.

Therefore we  have a commutative diagram
$$\xymatrix{
1\ar[r] & \Phi \ar@{=}[d]\ar[r]& \pi_1(X,x)  \ar[r]&  \pi_1(C,c) \ar[r]&  1\\
1 \ar[r]&  \Phi \ar[r]&  \pi_1(X^*,x) \ar@{->>}[u] \ar[r]&  \pi_1(C^*,c)  \ar@{->>}[u]\ar[r]&  1
}$$
that leads to the commutative diagram
$$\xymatrix{
H^2(\pi_1(X,x); \mu_l)\ar[d]\ar[r]^{\res}& H^2(\Phi ; \mu_l )\ar@{=}[d]\\
H^2(\pi_1(X^*,x); \mu_l)\ar[r]^{\res ^*}& H^2(\Phi ; \mu_l) .\\
}
$$
Note that the class $\delta ([\bar\tau])|_{X^*}\in H^2(\pi_1(X^*,x);
\mu_l)$ is zero, since the representation
$\bar\tau|_{X^*}:\pi_1(X^*,x)\to \PGL (n_1, \CC)$ lifts to
$\tilde\varphi :\pi_1(X^*,x)\to G$. Therefore the above diagram shows
that $\delta ([\bar\tau])$ lifts to a class $\eta$ in the kernel of $\res$.
Now, by the Lyndon--Hochschild--Serre spectral sequence
$$H^p(\pi_1(C,c); H^q (\Phi ;\mu_l))\Rightarrow H^{p+q}(\pi_1(X,x); \mu_l ),$$
so this kernel fits into the commutative diagram
$$\xymatrix{
H^2(\pi_1(C,c); \mu_l)\ar[d]\ar[r]&\ker(\res\ar[r]^-{\epsilon}) \ar[d]& H^1(\pi_1(C,c); H^1(\Phi ; \mu_l)) \ar[d]\\
H^2(\pi_1(C^*,c); \mu_l)\ar[r]&\ker (\res ^*) \ar[r]^-{\epsilon ^*}& H^1(\pi_1(C^*,c); H^1(\Phi ; \mu_l)) .\\
}
$$

Now we need to kill $\epsilon (\eta)$ by passing to a finite \'etale
cover of $C$.  Let us first remark that $A:= H^1(\Phi ; \mu_l)= \Hom
(\Phi ; \mu_l)=\Hom (\Phi/[\Phi, \Phi] ; \mu_l)$ is a finite abelian
group. This is clear since $\pi_1(F,x)$, an hence also $\Phi$, are
finitely generated. Passing to the finite \'etale covering defined by
the finite index subgroup $\bigcap _{a\in A}\pi_1(C,c)_a\subset
\pi_1(C,c)$ obtained by intersecting of all the stabilizers of
$\pi_1(C,c)$-action on $A$, we can assume that $A$ is a trivial
$\pi_1(C,c)$-module.  Then
$$H^1(\pi_1(C,c); A)=\Hom (\pi_1(C,c), A)=\Hom (\pi_1(C,c)/[\pi_1(C,c), \pi_1(C,c)], A)$$
is a finite abelian group. Let us
consider a finite index subgroup of $\pi_1(C,c)$ defined by
$$H:= \bigcap _{\varphi\in \Hom (\pi_1(C,c), A)}\ker \varphi \subset \pi_1(C,c).$$
It is clear from the definition that the induced restriction map
$H^1(\pi_1(C,c), A)\to H^1(H;A)$ is zero, so $H\subset \pi_1(C,c)$ defines
a finite \'etale cover of $C$ which kills the class $\epsilon (\eta)$.
Therefore we can assume that $\eta$ lifts to a class $\tilde \eta \in
H^2(\pi_1(C,c); \mu_l)$.

By assumption $C$ is not spherical, so the universal cover $\tilde C$ of $C$ is contractible.
So the spectral sequence
$$H^p(\pi_1(C,c); H^q(\tilde C, \mu_l))\Rightarrow H^{p+q}(C, \mu_l)$$
degenerates to $H^p(\pi_1(C,c);\mu_l)=  H^{p}(C, \mu_l)$.

If $C$ is not projective then there is nothing to prove as $H^2(C,
\mu_l)=0$.  So we can assume that $C$ is projective of genus $g\ge 1$.
For any finite abelian group $A$ any element $\alpha\in H^2(C, A)$ can
be killed after passing to a finite \'etale cover $\pi: C'\to
C$. This follows from the fact that $H^2(C,A)\simeq A$ and $\pi^*:
H^2(C, A)\to H^2(C', A)$ is multiplication by the degree of
$\pi$ and $\pi_1(C,c)$ has a subgroup of index equal to the order of $A$. But
$\pi_1(C,c)$ contains subgroups of arbitrary finite index, so applying
the above remark to the class $\tilde \eta$, we can find a connected degree
$n$ finite \'etale covering $C_Z\rightarrow C$ such that the
pullback of $\tilde \eta$ is zero in $H^2(\pi_1(C_Z,c'); \mu _l)=H^2(C_Z, \mu_l)$.
So letting $Z$ be the pullback of this covering over $X$ we get
that our class in $H^2(\pi_1(X,x);\mu _l)$ pulls back to the zero class
in $H^2(\pi_1(Z,z); \mu _l)$. Therefore, when pulled back to
$Z$, the representation lifts to $G$.
\end{proof}
}

Let $f_Z$ denote the map from $Z$ to $C_Z$. Let us consider
$$V_1'\otimes \Hom (V_1', V_{\rho}|_Z)\to V_{\rho}|_Z.$$
Note that $V_1'|_{F_Z}$ is irreducible, since $p_F: F_Z\to F$ is an
isomorphism (this is why we need $p:Z\to X$ to be induced from a
finite covering of the curves $C_Z\to C$).  So after restricting the
above map to the fiber $F_Z$ we get an isomorphism.  Since this map is
a map of local systems on $Z$, it must be an isomorphism.

If the rank of $V'_1$ is $\geq 2$ this gives a tensor decomposition
over $Z$ so $\rho$ is virtually tensor decomposable. Finally, if
$V'_1$ has rank $1$ then $V_1$ has rank $1$, so the restriction of
$\rho$ to $\pi _1(F,x)$ projects to the trivial representation in
$\PGL (n,\CC )$. This means that $\rho$ projectively factors through
$f$. This finishes proof of Proposition \ref{redonfiber}.
\end{proof}

\medskip

The proof of the next lemma uses projectivity of the base manifold.

\begin{Lemma}\label{virtual-implies-usual}
Let $X$ be a smooth complex projective variety of dimension $\ge 2$. Let
$\rho:\pi_1(X, x)\to \SL (n, \CC)$ be a representation with an infinite image.
Assume that $\rho$ virtually projectively factors through an orbicurve.
Then there exists a fibration $h: X\rightarrow B$ over a smooth projective curve $B$
such that the restriction of $V_{\rho}$ to a general fibre of $h$ has finite monodromy.
\end{Lemma}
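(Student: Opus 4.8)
The plan is to extract, from the virtual factorization, a genuine fibration on $X$ by passing to a Galois closure of the alteration and then descending. By hypothesis there is an alteration $p\colon Z\to X$ and a fibration $g\colon Z\to C$ onto an orbicurve such that $\bar\rho\circ p_*$ factors as $\sigma\circ g_*$ for some $\sigma\colon\pi_1(C)\to\PGL(n,\CC)$, where $\bar\rho$ is the projectivization of $\rho$. The image of $p_*$ has finite index in $\pi_1(X,x)$, and $\bar\rho$ restricted to it equals $\sigma(\pi_1(C))$; hence if $\pi_1(C)$ were finite then $\bar\rho$, and therefore $\rho$ (the preimage in $\SL(n,\CC)$ of a finite subgroup of $\PGL(n,\CC)$ is finite), would have finite image, contrary to hypothesis. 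So $C$ is hyperbolic, and by the structure of hyperbolic orbicurves (cf.\ \cite[Proposition 5.5]{BN}) it has a finite \'etale cover $C'$ which is a smooth projective curve of genus $\ge 1$; replacing $g$ by $Z\times_C C'\to C'$ and $p$ by its composition with $Z\times_C C'\to Z$ (still an alteration of $X$), I may assume $C$ itself is a smooth projective curve of genus $\ge 1$.

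Next I pass to the Galois closure of $k(Z)/k(X)$ and resolve equivariantly, obtaining an alteration $q\colon\widehat Z\to X$ with an action of a finite group $G$ for which $\widehat Z/G\to X$ is birational, together with an induced fibration $\widehat g\colon\widehat Z\to C_1$ onto a smooth projective curve of genus $\ge 1$ (finite over $C$), through which $q^*\bar\rho$ projectively factors. For each $\gamma\in G$ the composite $\widehat g\circ\gamma$ is again such a fibration, and $q^*\bar\rho$ projectively factors through it as well (up to conjugacy, since $\gamma$ lies over $X$). If two of these fibrations were \emph{independent}, i.e.\ a general fibre of one were mapped onto $C_1$ by the other, then the images in $\pi_1(\widehat Z)$ of the fundamental groups of their general fibres would generate a finite index subgroup contained in $\ker(q^*\bar\rho)$, so $q^*\bar\rho$, hence $\bar\rho$, hence $\rho$, would have finite image, a contradiction. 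Since a general fibre of a fibration onto a curve is connected and of the expected dimension, two non-independent fibrations onto $C_1$ must coincide; hence all the $\widehat g\circ\gamma$ agree, so the subfield $k(C_1)\hookrightarrow k(\widehat Z)$ cut out by $\widehat g$ is $G$-stable. Let $B_0$ be the smooth projective curve with $k(B_0)=k(C_1)^G=k(C_1)\cap k(X)$. Then $k(B_0)\subset k(X)$ gives a rational map $\psi\colon X\dashrightarrow B_0$ and a finite morphism $C_1\to B_0$, the two induced maps $\widehat Z\to B_0$ (through $\widehat g$ and through $q$) coincide, and since $\pi_1(C_1)$ is infinite, $B_0$ carries an orbifold structure making it a hyperbolic orbicurve.

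Now I extend $\psi$ to an honest fibration on $X$; this is the step that uses projectivity of $X$. If $B_0$ has positive coarse genus, then composing with an embedding of the coarse curve into its Jacobian turns $\psi$ into a rational map $X\dashrightarrow J$ to an abelian variety, which is necessarily a morphism (its indeterminacy would be resolved by blow-ups whose exceptional divisors, being covered by rational curves, cannot map non-constantly to $J$); as its image lies in the coarse curve, $\psi$ extends to a morphism $X\to B_0$. In general one restricts to a general complete intersection surface $S\subset X$, for which $\pi_1(S)\twoheadrightarrow\pi_1(X)$ by the Lefschetz theorem (as in the proof of Theorem \ref{Xiao}), uses that a rational map from a smooth projective surface to a hyperbolic orbicurve is a morphism, and then deduces that $\psi$ extends over $X$ since its indeterminacy locus has codimension $\ge 2$ and would restrict to indeterminacy on such a general $S$. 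Either way $\psi$ becomes a fibration $h\colon X\to B$ onto the smooth projective curve $B$ underlying $B_0$ (after a Stein factorization, which leaves the general fibre unchanged).

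Finally, for general $b\in B$ the fibre $X_b$ of $h$ pulls back under $q$ to a union $F_{c_1}\cup\dots\cup F_{c_r}$ of general fibres of $\widehat g$, and each $q|_{F_{c_i}}\colon F_{c_i}\to X_b$ is surjective and generically finite. By the homotopy exact sequence (Theorem \ref{Xiao}) the image of $\pi_1(F_{c_i})$ in $\pi_1(\widehat Z)$ lies in $\ker\widehat g_*$, so $q^*\bar\rho$ is trivial on it, hence $\bar\rho$ is trivial on the image of $\pi_1(F_{c_i})$ in $\pi_1(X)$; this image has finite index in the image of $\pi_1(X_b)$, so $\bar\rho$, and therefore $\rho$, has finite image on the image of $\pi_1(X_b)\to\pi_1(X)$, i.e.\ $V_\rho$ has finite monodromy along the general fibre of $h$. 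The main obstacle, and the reason projectivity of $X$ is indispensable, is exactly the descent carried out in the third paragraph together with the preceding reduction: producing a fibration on $X$ itself rather than merely on an alteration forces one both to exclude independent $G$-translates (using the infiniteness of the image) and to show the resulting rational map to the base curve has no indeterminacy.
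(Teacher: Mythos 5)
Your architecture --- pass to a Galois closure $q\colon\widehat Z\to X$ with group $G$, show the fibration $\widehat g\colon\widehat Z\to C_1$ is $G$-equivariant using infiniteness of the image, and descend to $B_0=C_1/G$ --- is genuinely different from the paper's, which instead shows that the images of three general fibres of $f\colon Z\to C$ are pairwise disjoint, algebraically equivalent divisors in $X$ and then invokes Totaro's theorem to produce the fibration $h$ directly. Most of your steps are sound, but there is a gap at the one place where both arguments must do real geometric work: showing that the descended map $\psi\colon X\dashrightarrow B_0$ has no indeterminacy. Your Jacobian argument handles positive coarse genus, but when the coarse space of $B_0$ is $\PP^1$ you appeal to the statement that ``a rational map from a smooth projective surface to a hyperbolic orbicurve is a morphism.'' The true content behind such a statement is that a rational curve (e.g.\ the last exceptional curve of a resolution of indeterminacy) cannot dominate the base of a fibration whose base, \emph{with the orbifold structure given by the multiplicities of the fibres of that resolved fibration}, has infinite orbifold fundamental group. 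But the orbifold structure you control is the one on $[C_1/G]$ coming from the branching of $C_1\to B_0$, and this need not agree with the multiple-fibre structure of the resolved fibration on $X$ (or on a surface section): multiplicities can drop when one descends along the generically finite map $\widehat Z\to X$, just as a multiple fibre upstairs can come from a reduced fibre downstairs over which the cover is branched. So the hypothesis of the fact you invoke is not verified; the surface-section reduction (which in addition needs the general complete intersection surface to meet the indeterminacy locus and to preserve indeterminacy there) is left hanging.

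The gap is repairable, but the repair is exactly the paper's key Claim: if $\psi$ were indeterminate at $x_0$, then $q^{-1}(x_0)$ would contain a complete irreducible curve $T$ with $q(T)=\{x_0\}$ and $\widehat g(T)=C_1$; the pullback of $\rho$ to the normalization $\tilde T$ is trivial, while the image of $\pi_1(\tilde T)$ in the (orbifold) fundamental group of $C_1$ has finite index, so the representation through which $q^*\rho$ projectively factors, and hence $\rho$ itself, would have finite image --- contradiction. This is where the infinite-image hypothesis must be used a second time; your write-up uses it only to exclude independent $G$-translates of $\widehat g$, which does not by itself rule out indeterminacy of the descended map.
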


\begin{proof}
By assumption there exists an alteration $p:Z\rightarrow X$ for which
$p^*\rho$ projectively factors through a fibration $f:Z\to C$ over an orbicurve $C$.
Let $Z_y\subset Z$ denote the fiber over $y\in C$. Choose $a\in C$ at which the stabilizer
group is trivial and the fiber $Z_a$ is smooth.

Let us consider $g=(p,f): Z\to X\times C$. Let $Y\subset X\times C$ be
the image of $g$ and let $r_1:Y\to X$ and $r_2:Y\to C$ denote the
corresponding projections.

\medskip

\noindent
\emph{Claim.} We have $p(Z_a)\cap p(Z_y) =\emptyset$ for general $y\in C$.

\begin{proof}
Assume that $p(Z_a)\cap p(Z_y) \ne \emptyset$ for general $y\in C$.
Then $Y\cap (p(Z_a)\times C)$ contains an irreducible component that dominates $C$.
So we can choose a complete irreducible curve $D\subset Z$ such that $g(D)$ is
contained in $Y\cap (p(Z_a)\times C)$ and $f(D)=C$.  Let $\nu:\tilde D
\to D\hookrightarrow Z$ be composition of the normalization of $D$
with the canonical inclusion. Then $\tilde D$ is a smooth projective
curve, $p\circ \nu$ maps $\tilde D$ into $p(Z_a)$ but $f\circ \nu: \tilde
D\rightarrow C$ is surjective.
The situation can be summed up in the following diagram:
$$\xymatrix{
&&&&C\\
\tilde D\ar@/^1pc/[rr]^{\nu} \ar[r]\ar@/^2pc/[urrrr]^{f\circ \nu}
\ar@/_1pc/[drrr]^{p\circ \nu}
& D\ar@{^{(}->} [r]& Z \ar[r]^g\ar@/^/[rru]^f\ar@/_/[rrd]^p&Y\ar@{^{(}->} [r]\ar[rd]^{r_1} \ar[ru]_{r_2} &X\times C\ar[d]\ar[u]\\
&&&p(Z_a)\ar@{^{(}->}[r]&X\\
}$$

If $p(D)$ is a point then $\nu^{\ast}p^{\ast}(V_{\rho})$ is a local
system with finite (in fact, trivial) monodromy on $\tilde D$. If
$D':=p(D)$ is a curve then we can choose an irreducible curve
$D''\subset Z_a$ mapping onto $D'$.  Since $p^*\rho$ projectively
factors through $f$, the monodromy of the pullback of $V_{\rho}$ to
$Z_a$ is contained in the center of $\SL (n, \CC)$, so it is
finite. Therefore the monodromy of the pullback of $V_{\rho}$ to the
normalization $\widetilde{D''}$ of $D''$ is also finite.  Since
$\widetilde{D''}\to Z\to X$ factors through the normalization
$\widetilde{D'}$ of $D'$, the image of $\pi_1(\widetilde{D''})$ in
$\pi_1(\widetilde{D'})$ has finite index. Therefore the local system
$V_{\rho}$, pulled back to the normalization $\widetilde{D'}$, has
finite monodromy.  The map $p\circ \nu : \tilde D\to X$ factors
through $\widetilde{D'}$. Therefore, also in this case
$\nu^{\ast}p^{\ast}(V_{\rho})$ is a local system with finite monodromy
on $\tilde D$.

On the other hand, the image of $\pi_1(\tilde D)$ in $\pi_1(C)$ has
finite index (since $f\circ g:\tilde D\to C$ is surjective), the image
of $\rho$ is infinite and $p^*\rho$ projectively factors through $f$. Therefore
the image of $\pi_1(\tilde D)$ in $SL(n,\CC)$ cannot be finite, a
contradiction.
\end{proof}

\medskip

By the above claim, we can choose three distinct, smooth fibres
$Z_{a}$, $Z_{b}$, $Z_{c}$ of $f$ such that their images in $X$ are
pairwise disjoint divisors. Since $Z_a$, $Z_b$ and $Z_c$ are
algebraically equivalent, the corresponding cycles $p_*Z_a$, $p_*Z_b$
and $p_*Z_c$ are also algebraically equivalent. In particular, the
corresponding cohomology classes of $p(Z_a)$, $p(Z_b)$ and $p(Z_c)$
lie on the same line in $H^2(X, \QQ)$. Therefore by \cite[Theorem
2.1]{To} there exists a fibration $h:X\rightarrow B$ over a smooth
curve $B$ such that $p(Z_a)$, $p(Z_b)$ and $p(Z_c)$ are fibers of
$h$. Moreover, by \cite[Lemma 2.2]{To} for general $y\in C$, the image
$p(Z_y)$ is a fiber of $h$. In particular, $p(Z_y)$ is smooth for
general $y\in C$.  Since the monodromy group of the pull back of
$V_{\rho}$ to $Z_y$ is finite and $\pi_1(Z_y)$ has finite index in
$\pi_1(p(Z_y))$, we conclude that the restriction of $V_{\rho}$ to a
general fiber of $h$ has finite monodromy.
\end{proof}

\medskip
Now we can prove Theorem \ref{factorequiv}, which we recall:

\begin{Theorem}
\label{factorequiv2}
Let $X$ be a smooth complex projective variety. Let us fix an
irreducible representation $\rho:\pi_1(X, x)\to \SL (n, \CC)$ for some
$n\ge 2$. Suppose that $\rho$ is not virtually tensor decomposable,
and not virtually reducible. Then the following conditions are
equivalent.
\begin{enumerate}

\item $\rho$ projectively factors through an orbicurve;

\item $\rho$ virtually projectively factors through an orbicurve;

\item There exists a nonconstant
map $f: X\rightarrow C$ to an orbicurve and a smooth
fiber $F=f^{-1}(y)$, such that the restriction of
$\rho$ to $\pi _1(F,x)$ becomes reducible;

\item There exists an alteration $p:Z\rightarrow X$
such that the previous condition holds for the pullback $p^{\ast}\rho$.

\end{enumerate}
\end{Theorem}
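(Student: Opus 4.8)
The plan is to prove the cycle of implications $(1)\Rightarrow(3)\Rightarrow(4)\Rightarrow(2)\Rightarrow(1)$. First note that the hypotheses force $\rho$ to have infinite image: if $\rho(\pi_1(X,x))$ were finite, then the restriction of $\rho$ to the finite-index subgroup $\ker\rho$ would be the trivial representation, which is reducible for $n\ge 2$, so $\rho$ would be virtually reducible, contrary to hypothesis. For $(1)\Rightarrow(3)$, if $\rho$ projectively factors through a fibration $f:X\to C$ to an orbicurve, choose $y$ so that $F=f^{-1}(y)$ is a smooth fiber; then the induced projective representation of $\pi_1(F)$ is trivial, so $\rho|_{\pi_1(F)}$ has image in the center $\mu_n\subset\SL(n,\CC)$ and is therefore reducible. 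The implication $(3)\Rightarrow(4)$ is immediate, taking $p=\mathrm{id}$, which is an alteration.

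For $(4)\Rightarrow(2)$, suppose $p:Z\to X$ is an alteration, $f:Z\to C$ a nonconstant fibration to an orbicurve, and $F=f^{-1}(y)$ a smooth fiber with $p^{\ast}\rho|_{\pi_1(F)}$ reducible. Since $\rho$ is not virtually reducible, $p^{\ast}\rho$ is irreducible. The smooth fibers of $f$ lie over a Zariski-dense open $C^{\circ}\subseteq C$ and, by Ehresmann's theorem applied to the proper submersion $f|_{f^{-1}(C^{\circ})}$, form a locally trivial bundle; hence the restrictions of $p^{\ast}\rho$ to the fundamental groups of any two such fibers are conjugate in $\pi_1(Z)$, so reducibility of one of them gives reducibility of the general fiber. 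We may therefore apply Proposition \ref{redonfiber} to $p^{\ast}\rho$ and $f:Z\to C$. Of its three possible conclusions, ``$p^{\ast}\rho$ is virtually tensor decomposable'' and ``$p^{\ast}\rho$ is virtually reducible'' each imply, by composing alterations, that $\rho$ itself is virtually tensor decomposable, resp.\ virtually reducible, both excluded. Hence $p^{\ast}\rho$ projectively factors through $f:Z\to C$, which is exactly statement $(2)$.

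For $(2)\Rightarrow(1)$, since $\rho$ has infinite image, Lemma \ref{virtual-implies-usual} provides a fibration $h:X\to B$ over a smooth projective curve such that the monodromy of $V_{\rho}$ on a general fiber $F_h$ is finite, with image $H\subset\SL(n,\CC)$. I claim $H$ is reducible. Let $N\trianglelefteq\pi_1(X,x)$ be the image of $\pi_1(F_h)$, which is normal by the homotopy exact sequence (Theorem \ref{Xiao}), and let $G$ be the Zariski closure of $\rho(\pi_1(X,x))$; then $G$ normalizes the finite group $H=\overline{\rho(N)}$, so the connected component $G^0$ centralizes $H$. If $H$ were irreducible, Schur's lemma would give $C_{\SL(n,\CC)}(H)=\mu_n$, forcing $G^0$ finite and hence trivial, contradicting infinite image. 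So $H$ is reducible, i.e.\ the restriction of $\rho$ to a general fiber of $h$ is reducible. Endowing $B$ with the orbifold structure that records the multiple fibers of $h$, we apply Proposition \ref{redonfiber} once more, now to $\rho$ and $h:X\to B$; again virtual tensor decomposability and virtual reducibility are excluded, so $\rho$ projectively factors through $h$, an orbicurve, giving $(1)$ and closing the cycle.

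The main obstacle is the step $(2)\Rightarrow(1)$: passing from a virtual factorization to an honest one. It rests on Lemma \ref{virtual-implies-usual} together with the observation that infinite global monodromy forces the (necessarily finite) fiber monodromy of the fibration produced there to be reducible, which is precisely the input needed to re-apply Proposition \ref{redonfiber} directly over $X$. The remaining implications are bookkeeping with alterations, Ehresmann's theorem, and the homotopy exact sequence.
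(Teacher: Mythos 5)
Your proof is correct and follows essentially the same route as the paper's: Lemma \ref{virtual-implies-usual} together with the normalizer-of-a-finite-irreducible-subgroup argument yields reducibility on a general fiber, and Proposition \ref{redonfiber} upgrades fiberwise reducibility to a projective factorization, with the two alternative conclusions excluded by the standing hypotheses (the paper merely organizes the implications as $(2)\Rightarrow(3)\Rightarrow(1)$ and deduces $(4)\Rightarrow(2)$ by running $(3)\Rightarrow(1)$ over $Z$, whereas you fold these into one cycle). The only point you elide is that conditions (3) and (4) provide a nonconstant map rather than a fibration, so before invoking Proposition \ref{redonfiber} one should replace $f$ by its Stein factorization, as the paper does; this is routine.
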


\begin{proof}

  Let us first remark that the image of $\rho$ is infinite. Indeed,
    if the image of $\rho$ is finite then there exists a finite
    \'etale covering $X'\to X$ such that the pull back of $\rho$ to
    $X'$ is trivial, contradicting our assumption that $\rho$ is not
    virtually reducible. In the following we assume that $X$ has
    dimension $\ge 2$, as otherwise the theorem is trivial.  Clearly,
    the implications $(1)\Rightarrow (2)$ and $(3)\Rightarrow (4)$ are
    trivial.  Let us assume $(2)$. Then by Lemma
    \ref{virtual-implies-usual} there is a map $h:X\rightarrow B$ to
    an orbicurve, such that the restriction of $V_{\rho}$ to a general
    fiber of $h$ has finite monodromy. Apply the homotopy exact
    sequence (Theorem \ref{Xiao}), making sure that the orbicurve
    structure of $B$ is the necessary one so that it works. Let $F$
    denote a general fiber. Assume that the basepoint $x$ belongs to
    $F$ and denote by $b:= f(x)$ the basepoint in $B$. We get the
    exact sequence $$ \pi_1(F,x) \rightarrow \pi _1(X,x) \rightarrow
    \pi _1(B,b) \rightarrow 1.  $$ Now we need the following
    group-theoretic lemma:

\begin{Lemma} If $H\subset \SL (n, \CC)$ is
      finite and irreducible then the normalizer of $H$ in $\SL (n,
      \CC)$ is finite.  \end{Lemma}

\begin{proof} Since $H$ is a
      closed subgroup of $\SL (n, \CC)$, its normalizer $G:=N_H\SL (n,
      \CC)$ is also closed in $\SL (n , \CC)$. Let us fix an element
      $h\in H$.  Since the identity component $G^{0}$ normalizes $H$,
      the map $\varphi_h:G^{0}\to \SL (n, \CC)$ defined by
      $\varphi_h(g)=ghg^{-1}h^{-1}$ has image in $H$. Since $G^{0}$ is
      connected and $H$ is finite, $\varphi_h$ is constant.  Hence
      $G^{0}$ commutes with $H$. Since $H$ is irreducible, elements
      commuting with $H$ are contained in the centre $Z(\SL (n, \CC))$
      of $\SL (n, \CC)$.  Therefore $G^0$ is finite. Since $G^{0}$ is
      of finite index in $G$, $G$ is also finite.
\end{proof}

Let
    $\Phi\subset \pi _1(X,x)$ denote the image of $\pi _1(F,x)$.
    Suppose that the restriction $\varphi := \rho |_{\Phi}$ is
    irreducible and let $H\subset SL (n, \CC)$ denote the image of
    $\varphi$.  Since $\Phi$ is a normal subgroup of $\pi _1(X,x)$,
    for any $s\in \pi _1(X,x)$, $\rho (s)$ is contained in the
    normalizer of $H$ in $\SL(n,\CC)$.  Therefore, by the above lemma,
    the image of $\rho$ is finite, contradicting our hypothesis.

    Thus the restriction $V_{\rho}|_F$ to a general fiber is
    reducible, proving the implication $(2)\Rightarrow (3)$.

    Now let us assume $(3)$. Passing to the Stein factorization and
    using Theorem \ref{Xiao}, we can assume that $f$ is a fibration
    over an orbicurve, and the restriction $V_{\rho}|_F$ to a general
    fiber of $f$ is reducible. In view of the hypothesis that $\rho$
    is not virtually tensor decomposable and not virtually reducible,
    Proposition \ref{redonfiber} implies that $\rho$ projectively
    factors through $f$.  This completes the proof of the implication
    $(3)\Rightarrow (1)$.

  The fact that $(3)\Rightarrow (1)$ shows that $(4)\Rightarrow
  (2)$. Indeed, in the situation of $(4)$, the pullback of $\rho$ to
  $Z$ satisfies all the hypotheses: it is irreducible because we
  assumed that $\rho$ was not virtually reducible, and the hypotheses
  of not being virtually tensor decomposable and not being virtually
  reducible are conserved. Over $Z$ we are in the situation of $(3)$
  so we have shown that $\rho |_Z$ factors through an orbicurve. This
  shows $(4)\Rightarrow (2)$ which completes the proof.
\end{proof}

\medskip

In order to optimize the statements of our corollaries in Section
\ref{sec-cor}, we need a result concerning groups lying between $\SL(3,\CC )$ and $\GL(3,\CC )$.
For notational reasons we do this separately here, rather than modifying the previous discussion to take into account such groups.  However, one can easily check that the proofs of Proposition \ref{redonfiber} and Theorem \ref{factorequiv2} work also in this more general case with just  minor notational changes.

Suppose $G\subset \GL(3,\CC )$ is a subgroup such that
$\SL(3,\CC )\subset G$ is of finite index. Let $\mu_m:= G/\SL(3,\CC )$ denote
the quotient group which is a finite cyclic group of some order $m$.

\begin{Lemma}
\label{biggergroup}
Suppose $\rho : \pi _1(X,x)\rightarrow G$ is a representation
with Zariski-dense image.
Let $Z\rightarrow X$ be the cyclic covering determined by the composed
representation $\pi _1(X,x)\rightarrow \mu_m$, and let
$$
\rho _Z: \pi _1(Z,z)\rightarrow \SL(3,\CC )
$$
denote the pullback representation. If $\rho _Z$ projectively factors through a map to an orbicurve then $\rho$ projectively factors through a map to an orbicurve.
\end{Lemma}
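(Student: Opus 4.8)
The plan is to recognize the hypothesis as the ``virtual'' factorization condition appearing in Theorem \ref{factorequiv2}, and then to invoke the equivalence of that condition with honest factorization proved there. The first step is to check that $\rho$ satisfies the standing hypotheses of that theorem. Since $\SL(3,\CC)$ is connected and of finite index in $G$, it is the identity component of $G$; as $\rho$ has Zariski-dense image, for every finite-index subgroup $\Gamma\subseteq\pi_1(X,x)$ the Zariski closure of $\rho(\Gamma)$ is again of finite index in $G$, hence contains $\SL(3,\CC)$, which acts irreducibly on $\CC^3$. Thus $\rho|_{\Gamma}$ is irreducible for all such $\Gamma$; in particular $\rho$ is irreducible and not virtually reducible. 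Being of rank $3$, it is also not virtually tensor decomposable, since a tensor product of two local systems of rank $\ge 2$ has rank $\ge 4$ (this is the rank count in Lemma \ref{virtually-reducible-is-not-(1,1,1)}). Note also that $\rho_Z=p^{\ast}\rho$ genuinely takes values in $\SL(3,\CC)$, because the composite $\pi_1(Z,z)\to\pi_1(X,x)\to\mu_m$ is trivial by construction of $Z$.

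The second step is immediate: the covering $p:Z\to X$ is finite \'etale, hence in particular an alteration, so the assumption that $\rho_Z$ projectively factors through a map to an orbicurve is precisely the statement that $\rho$ \emph{virtually projectively factors through an orbicurve} in the sense of Definition \ref{factordef}, i.e.\ condition (2) of Theorem \ref{factorequiv2}. Applying the implication $(2)\Rightarrow(1)$ of that theorem --- which is legitimate since $\rho$ is neither virtually reducible nor virtually tensor decomposable --- yields that $\rho$ itself projectively factors through a map to an orbicurve, as desired.

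The one genuine point to be checked --- the ``main obstacle,'' such as it is --- is that Theorem \ref{factorequiv2} and Proposition \ref{redonfiber} on which it rests are formulated for representations into $\SL(n,\CC)$, whereas $\rho$ takes values in the intermediate group $G$ with $\SL(3,\CC)\subseteq G\subseteq\GL(3,\CC)$. As already asserted in the paragraph preceding this lemma, their proofs carry over with only notational changes: the target group enters only through its quotient $\PGL(3,\CC)$ in the definition of projective factorization, and through the finite central extensions used in the cohomological lifting argument, where the finite central subgroup $\mu_l\subset\SL(n,\CC)$ is replaced by the corresponding finite central subgroup of $G$; the homotopy-exact-sequence, van Kampen and spectral-sequence arguments are unaffected. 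Verifying this generalization once suffices, and the deduction above then needs no further geometric input.
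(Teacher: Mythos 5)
Your argument is correct, but it follows a genuinely different route from the paper's. You verify the standing hypotheses of Theorem \ref{factorequiv2} for the $G$-valued representation $\rho$ itself (irreducibility on all finite-index subgroups via Zariski density, no tensor decomposition by the rank count), observe that the cyclic \'etale cover is an alteration so your hypothesis is exactly condition (2), and invoke $(2)\Rightarrow(1)$ --- but for that last step you must lean on the paper's unproved assertion that Proposition \ref{redonfiber} and Theorem \ref{factorequiv2} extend from $\SL (n,\CC )$ to intermediate groups $\SL (3,\CC )\subset G\subset \GL (3,\CC )$ ``with minor notational changes.'' The paper's own proof is engineered precisely to avoid that reliance: it composes $\bar\rho$ with the adjoint representation $\PGL (3,\CC )\hookrightarrow \SL (8,\CC )$ on $\mathfrak{sl}(3)\cong \CC ^8$ to obtain an honest $\SL (8,\CC )$-representation $\zeta$, checks that $\zeta$ is neither virtually reducible nor virtually tensor decomposable (its monodromy group is $\PGL (3,\CC )$, which is connected and hence has no proper finite-index subgroups, so the monodromy is unchanged on finite covers), notes that $\zeta _Z$ factors through the orbicurve because it is pulled back from $\bar\rho _Z$, applies Theorem \ref{factorequiv2} verbatim to conclude that $\zeta$ projectively factors, and then descends to $\rho$ using the injectivity of $\PGL (3,\CC )\to \PGL (8,\CC )$. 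Your route is shorter and avoids the rank-$8$ detour; the paper's route buys a proof that uses the factorization theorem only as literally stated, at the cost of verifying the hypotheses for the adjoint local system.
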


\begin{proof}
Consider the composed representation
$$
\pi _1(X,x)\rightarrow G \rightarrow \PGL (3,\CC )
\stackrel{{\rm ad}}{\rightarrow} \SL (8,\CC ),
$$
where the last arrow is the adjoint action on the Lie algebra
$\mathfrak{sl}(3)\cong \CC ^8$.
Call this representation $\zeta$. Our hypothesis
tells us that the monodromy group of $\zeta$ is
$\PGL (3,\CC )$, and that its restriction $\zeta _Z$
to $Z$ factors through an orbicurve. As $\PGL(3,\CC )$
has no finite-index subgroups, the monodromy group of any pullback
to a finite cover is still $\PGL(3,\CC )$. In particular, $\zeta$
is not virtually reducible, and not virtually tensor decomposable.
By Theorem \ref{factorequiv2} we conclude that $\zeta$ projectively factors through an orbicurve. However, the composed map
$$
\PGL (3,\CC )\rightarrow \SL (8,\CC )\rightarrow \PGL (8,\CC )
$$
is injective. Therefore, $\zeta$ factors through an orbicurve.
This factorization gives the projective factorization of $\rho$
through an orbicurve, that we are looking for.
\end{proof}

\section*{Acknowledgements}

The authors would like to thank the referees for their remarks.

The first author was partially supported by Polish National Centre (NCN) contract number
2015/17/B/ST1/02634. This collaboration started during the first author's visit to
Universit\'e de Nice -- Sophia Antipolis in May--June 2015. The visit was supported by Szolem Mandelbrojt
prize awarded to the first author by French Mathematical Society (SMF) and Institute Fran\c{c}ais
in Poland.

The second author was partially supported by
French ANR grants TOFIGROU (ANR-13-PDOC-0015)
and Hodgefun (ANR-16-CE40-0011).

\end{document}